\theoremstyle{plain}
\newtheorem{theorem}{Theorem}
\newtheorem{lemma}[theorem]{Lemma}
\theoremstyle{remark}
\newtheorem{remark}[theorem]{Remark}
\newcommand{\away}{\text{away}}
\newcommand{\axes}{\text{axes}}
\newcommand{\id}{\text{id}}
\newcommand{\locus}{\text{locus}}
\newcommand{\sym}{\text{sym}}
\DeclareMathOperator{\dist}{dist}
\DeclareMathOperator{\GL}{GL}
\begin{document}

\title{Entire maps with rational preperiodic points and multipliers}

\author{Xavier Buff}
\address{Institut de Math\'{e}matiques de Toulouse, UMR 5219, Universit\'{e} de Toulouse, CNRS, UPS, F-31062 Toulouse Cedex 9, France}
\email{xavier.buff@math.univ-toulouse.fr}

\author{Igors Gorbovickis}
\address{Constructor University Bremen gGmbH, Campus Ring 1, 28759 Bremen, Germany}
\email{igorbovickis@constructor.university}

\author{Valentin Huguin}
\address{Constructor University Bremen gGmbH, Campus Ring 1, 28759 Bremen, Germany}
\email{vhuguin@constructor.university}

\thanks{The research of the second and third authors was supported by the German Research Foundation (DFG, project number 455038303).}

\subjclass[2020]{Primary 37F10, 37P35}

\begin{abstract}
Given a number field $\mathbb{K} \subset \mathbb{C}$ that is not contained in $\mathbb{R}$, we prove the existence of a dense set of entire maps $f \colon \mathbb{C} \rightarrow \mathbb{C}$ whose preperiodic points and multipliers all lie in $\mathbb{K}$. This contrasts with the case of rational maps. In addition, we show that there exists an escaping quadratic-like map that is not conjugate to an affine escaping quadratic-like map and whose multipliers all lie in $\mathbb{Q}$.
\end{abstract}

\maketitle

\section{Introduction}

This article is motivated by the study of the arithmetic properties of dynamical systems. These have been particularly investigated in the case of iterated rational maps. As an example, a classical result in arithmetic dynamics, due to Northcott, implies the following:

\begin{theorem}[{\cite[Theorem~3]{N1950}}]
\label{theorem:ratPreper}
Assume that $\mathbb{K}$ is a number field and $f \in \mathbb{K}(z)$ is a rational map of degree $d \geq 2$. Then $f$ has only finitely many preperiodic points in $\mathbb{K}$.
\end{theorem}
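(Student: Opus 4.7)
The classical proof, due ultimately to Northcott and streamlined by Call--Silverman via Tate's telescoping trick, proceeds through the construction of a canonical height function attached to $f$. I would first recall the (logarithmic) Weil height $h \colon \mathbb{P}^1(\overline{\mathbb{Q}}) \to [0, \infty)$, defined as a normalized sum of local contributions over all places of a number field containing the coordinates of the point. The key elementary input is Northcott's finiteness property: for any $B \geq 0$ and any number field $\mathbb{K}$, the set
\[
\{z \in \mathbb{K} : h(z) \leq B\}
\]
is finite, since there are only finitely many elements of bounded height and bounded degree over $\mathbb{Q}$.

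The next step is to control how $h$ transforms under $f$. Writing $f = P/Q$ with $P, Q \in \mathbb{K}[z]$ coprime of maximal degree $d$, a local analysis at each place of $\mathbb{K}$ (using a resultant estimate to quantify the absence of common zeros of $P$ and $Q$) yields
\[
h(f(z)) = d \cdot h(z) + O(1),
\]
with an implied constant depending only on $f$. Tate's telescoping argument then shows that the limit
\[
\hat{h}_f(z) := \lim_{n \to \infty} \frac{h(f^n(z))}{d^n}
\]
exists and defines the \emph{canonical height}, which satisfies both $\hat{h}_f \circ f = d \cdot \hat{h}_f$ and $\hat{h}_f = h + O(1)$.

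With this tool the theorem is immediate. If $z \in \mathbb{K}$ is preperiodic for $f$, its forward orbit is finite, so $\{h(f^n(z))\}_{n \geq 0}$ is bounded; but the functional equation forces $\hat{h}_f(z) = d^{-n} \hat{h}_f(f^n(z)) \to 0$, hence $\hat{h}_f(z) = 0$. Combined with $\hat{h}_f = h + O(1)$, this bounds $h(z)$ by a constant depending only on $f$, and Northcott's finiteness property then confines the preperiodic points in $\mathbb{K}$ to a finite set.

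The main technical obstacle is the estimate $h(f(z)) = d \cdot h(z) + O(1)$. The upper bound $h(f(z)) \leq d \cdot h(z) + O(1)$ is a straightforward consequence of the ultrametric/triangle inequality at each place applied to $P(z)$ and $Q(z)$. The matching lower bound is more delicate: it requires showing that at every place $v$, whenever $|z|_v$ is large one of $|P(z)|_v$ or $|Q(z)|_v$ dominates, while at places where $|z|_v$ is moderate the coprimality of $P$ and $Q$ — quantified via their resultant — prevents both from being simultaneously small. Once this adelic input is secured, the rest of the argument is a formal manipulation.
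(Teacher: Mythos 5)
The paper does not prove this statement at all: it is quoted directly from Northcott with the citation \cite[Theorem~3]{N1950} and serves purely as background, to be contrasted with the main theorems showing that the analogous finiteness fails for transcendental entire maps. So there is no proof in the paper against which to compare yours.

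Taken on its own terms, your argument is correct and is the standard modern proof. The functional estimate $h(f(z)) = d\,h(z) + O(1)$ together with Tate's telescoping trick gives a well-defined canonical height $\hat h_f = \lim_n d^{-n} h(f^{\circ n}(z))$ with $\hat h_f \circ f = d\,\hat h_f$ and $\hat h_f = h + O(1)$; a preperiodic point has bounded forward orbit, hence $\hat h_f(z) = d^{-n}\hat h_f(f^{\circ n}(z)) \to 0$, so $h(z) = O(1)$ with a constant depending only on $f$, and the Northcott finiteness property for points of bounded height in the fixed number field $\mathbb{K}$ concludes. You correctly isolate the technical core as the lower bound $h(f(z)) \geq d\,h(z) - O(1)$, which requires a resultant or Nullstellensatz input at each place to exploit coprimality of $P$ and $Q$; once that is secured the rest is formal. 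One historical remark worth keeping in mind: the canonical-height formulation you use is due to Call and Silverman (building on Tate and N\'{e}ron), whereas Northcott's original 1950 argument bounded the height of preperiodic points directly, without introducing $\hat h_f$; you attribute this correctly, and either route proves the theorem.
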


Following a conjecture made by Milnor in~\cite{M2006}, questions concerning rational maps with integer or rational multipliers have also been investigated. In particular, Ji and Xie recently proved Milnor's conjecture, showing that power maps, Chebyshev maps and Latt\`{e}s maps are the only rational maps whose multipliers all lie in the ring of integers of a given imaginary quadratic field (see~\cite[Theorem~1.12]{JX2023}). The third author later proved the following stronger result:

\begin{theorem}[{\cite[Main Theorem]{H2023}}]
\label{theorem:ratMult}
Assume that $\mathbb{K} \subset \mathbb{C}$ is a number field and $f \in \mathbb{C}(z)$ is a rational map of degree $d \geq 2$ whose multiplier at each cycle lies in $\mathbb{K}$. Then $f$ is a power map, a Chebyshev map or a Latt\`{e}s map.
\end{theorem}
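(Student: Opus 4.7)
The plan is to derive geometric rigidity on $f$ from the arithmetic constraint that all multipliers lie in $\mathbb{K}$, ultimately forcing $f$ into the exceptional class of power, Chebyshev, and Latt\`es maps. As a preliminary step, I would reduce to the situation where $f$ itself is defined over a number field: since the multiplier spectrum is a conjugacy invariant that determines the conjugacy class of $f$ up to finitely many choices (Milnor, McMullen), and the symmetric functions in the multipliers of a given period all lie in $\mathbb{K}$, some $\operatorname{PGL}_2(\mathbb{C})$-conjugate of $f$ will be defined over a finite extension $\mathbb{L}$ of $\mathbb{K}$.

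The core of the argument is arithmetic equidistribution on Berkovich projective lines (Baker--Rumely, Favre--Rivera-Letelier, Yuan). For each place $v$ of $\mathbb{L}$, the Galois orbits of periodic points of $f$ equidistribute with respect to the $v$-adic measure of maximal entropy $\mu_{f,v}$. Integrating $\log|f'|_v$ against the empirical measures supported on a period-$n$ cycle $C$, one obtains
\[
\frac{1}{n}\log|\lambda(C)|_v \; \longrightarrow \; \chi_v(f) \;:=\; \int \log|f'|_v \, d\mu_{f,v},
\]
where $\chi_v(f)$ is the $v$-adic Lyapunov exponent. Since $\lambda(C) \in \mathbb{K} \subset \mathbb{L}$, the product formula yields $\sum_v [\mathbb{L}_v:\mathbb{Q}_v]\log|\lambda(C)|_v = 0$, and a limit argument then gives the global identity $\sum_v [\mathbb{L}_v:\mathbb{Q}_v]\,\chi_v(f) = 0$.

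The rigidity step combines this vanishing sum with two one-sided inequalities: at archimedean places, Przytycki's inequality gives $\chi_v(f) \geq \tfrac{1}{2}\log d$, with equality characterizing exceptional maps through the theorems of Zdunik and Ma\~n\'e; at non-archimedean places, $\chi_v(f) \geq 0$, with equality reflecting good reduction in a strong sense. Saturating the bound at every place should then force $f$ to be a power, Chebyshev, or Latt\`es map.

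The hard part, in my view, will be the non-archimedean analysis: controlling $\chi_v(f)$ at places of bad reduction and showing that the vanishing of $\sum_v \chi_v(f)$ really does propagate down to equality in Przytycki's inequality at every archimedean place. Equivalently, one must rule out the possibility that a large archimedean excess $\chi_v(f) > \tfrac12\log d$ is compensated by negative non-archimedean contributions; this seems to require delicate estimates on the dynamics of $f$ over Berkovich spaces at places of bad reduction, refining the methods introduced by Ji--Xie in the imaginary quadratic case.
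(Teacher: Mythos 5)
Note first that the paper you were given does not prove this theorem at all: it is quoted verbatim from \cite{H2023} and used as a black box. So there is no internal proof to compare against; I will assess your outline on its own merits. The architecture you propose---reduce to a map defined over a number field via McMullen's multiplier-spectrum finiteness, use equidistribution of periodic points at every place to pass from multipliers to local Lyapunov exponents, invoke the product formula, and exploit Przytycki's archimedean lower bound---is the right circle of ideas and is close in spirit to the Ji--Xie/Huguin strategy. But two of the inequalities you rely on are simply false as stated, and together they make the final step incoherent rather than merely incomplete.

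The non-archimedean claim $\chi_v(f) \geq 0$ is wrong. Take $f(z) = z^d$ over $\mathbb{Q}$ and a prime $v \mid d$: the $v$-adic equilibrium measure is the Dirac mass at the Gauss point of the Berkovich line, where $|f'(z)|_v = |d|_v < 1$, so $\chi_v(f) = \log|d|_v < 0$. The problem is structural, not a borderline counterexample: if non-archimedean Lyapunov exponents were all nonnegative, then combined with Przytycki's $\chi_v \geq \tfrac12\log d > 0$ at archimedean places, your product-formula identity $\sum_v n_v \chi_v(f) = 0$ would be impossible for \emph{any} rational map over a number field. Negative non-archimedean contributions are exactly what balances the archimedean excess; you even half-acknowledge this in your last paragraph, where you propose to ``rule out'' negative non-archimedean contributions---contradicting the bound $\chi_v \geq 0$ you asserted a few sentences earlier. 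Ruling them out would make the theorem vacuous, not prove it.

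Second, equality $\chi_v = \tfrac12\log d$ in Przytycki's bound characterizes \emph{Lattès maps only} (Zdunik's theorem together with the dimension formula $\dim \mu_f = \log d / \chi$); power maps and Chebyshev maps have $\chi = \log d$, strictly larger. So ``saturating the archimedean bound'' cannot detect the power and Chebyshev cases, and the dichotomy you want does not come from Przytycki's inequality alone. Finally, a smaller but real issue: the limit $\tfrac1n\log|\lambda(C)|_v \to \chi_v(f)$ does not hold for an arbitrary sequence of cycles $C$; it holds on average (or for $\mu_{f,v}$-generic cycles), so passing from the exact product-formula identity on a single cycle to the global identity $\sum_v n_v\chi_v(f) = 0$ requires averaging over all period-$n$ cycles and justifying the interchange of $\lim_{n}$ with the sum over the infinitely many places $v$, using that all but finitely many places contribute $0$ uniformly.
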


In this article, we prove that Theorems~\ref{theorem:ratPreper} and~\ref{theorem:ratMult} do not hold for transcendental entire maps. Our method is to construct maps with desired properties recursively, by successive perturbations. Thus, we extend the result below, due to Green, to a dynamical setting.

\begin{theorem}[{\cite[Theorem~1]{G1939}}]
There exists a transcendental entire function $f \colon \mathbb{C} \rightarrow \mathbb{C}$ such that $f\left( \mathbb{Q}(i) \right) \subseteq \mathbb{Q}(i)$.
\end{theorem}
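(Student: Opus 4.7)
The plan is to build $f$ as an interpolation-type (Newton) series adapted to an enumeration of $\mathbb{Q}(i)$. Fix an enumeration $(q_k)_{k \geq 1}$ of $\mathbb{Q}(i)$, set $P_k(z) = \prod_{j=1}^{k-1}(z - q_j)$ with the convention $P_1 = 1$, and define
\[ f(z) = \sum_{k=1}^\infty a_k P_k(z), \]
with the coefficients $a_k \in \mathbb{Q}(i) \setminus \{0\}$ chosen recursively and each of very small modulus.

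Letting $M_k = \sup_{|z| \leq k}|P_k(z)|$, the condition $|a_k| \leq 2^{-k}/(1 + M_k)$ guarantees $|a_k P_k(z)| \leq 2^{-k}$ on every disk $\{|z| \leq R\}$ as soon as $k \geq R$, so the series converges uniformly on compact subsets of $\mathbb{C}$ and $f$ is entire. Moreover, for each fixed $m$, the polynomial $P_k$ vanishes at $q_m$ whenever $k > m$, and hence the series collapses to a finite sum
\[ f(q_m) = \sum_{k=1}^m a_k P_k(q_m) \in \mathbb{Q}(i), \]
which yields $f(\mathbb{Q}(i)) \subseteq \mathbb{Q}(i)$.

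It remains to ensure that $f$ is transcendental, i.e., not a polynomial (since any entire function algebraic over $\mathbb{C}(z)$ is necessarily a polynomial). Writing $f(z) = \sum_{n \geq 0} c_n z^n$, the fact that $P_{n+1}$ is monic of degree $n$ gives $c_n = a_{n+1} + \sum_{k > n+1} a_k [P_k]_n$, where $[P_k]_n$ denotes the coefficient of $z^n$ in $P_k$. When choosing $a_k$, I would additionally impose the finitely many strict inequalities $|a_k| \cdot |[P_k]_n| \leq 2^{-k}|a_{n+1}|$ for $0 \leq n \leq k-2$; these are all upper bounds on $|a_k|$ that can always be met by a nonzero Gaussian rational, since $\mathbb{Q}(i) \setminus \{0\}$ accumulates at $0$. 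They force $|c_n| \geq |a_{n+1}| - 2^{-n-1}|a_{n+1}| > 0$ for every $n$, so $f$ cannot be a polynomial. The main subtlety lies precisely in this simultaneous bookkeeping: at each step the admissible modulus of the new $a_k$ depends on the previously chosen $a_1, \ldots, a_{k-1}$, but only finitely many constraints are active at each stage, so the recursive construction goes through.
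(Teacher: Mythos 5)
Your proof is correct. The paper cites this result from Green's 1939 article and does not reprove it, so there is no internal proof to compare against; your Newton-interpolation argument with recursively chosen small coefficients $a_k\in\mathbb{Q}(i)\setminus\{0\}$ is essentially Green's (and St\"ackel's) classical construction. All three design goals are met as you describe: the bound $|a_k|\le 2^{-k}/(1+M_k)$ gives uniform convergence on compacts, so $f$ is entire; $P_k(q_m)=0$ for $k>m$ makes $f(q_m)$ a finite $\mathbb{Q}(i)$-linear combination of Gaussian rationals; and the additional constraints $|a_k|\,|[P_k]_n|\le 2^{-k}|a_{n+1}|$ for $0\le n\le k-2$ (only finitely many positive upper bounds at each stage, so a nonzero Gaussian-rational choice always exists) force $|c_n|\ge\bigl(1-2^{-(n+1)}\bigr)|a_{n+1}|>0$ for every $n$, so $f$ is not a polynomial and hence transcendental. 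The one thing left implicit is that extracting $c_n=a_{n+1}+\sum_{k>n+1}a_k[P_k]_n$ requires interchanging the two summations; this is justified by uniform convergence of $\sum_k a_kP_k$ on the unit circle together with Cauchy's integral formula, which your convergence estimate already provides, but a word to that effect would close the argument cleanly. It is also worth noting that this construction is the static prototype of the iterated small-perturbation scheme the paper uses in Sections 2 and 3 to prove its dynamical extensions.
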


Given a number field $\mathbb{K} \subset \mathbb{C}$ that is not contained in $\mathbb{R}$, we prove the existence of transcendental entire maps whose preperiodic points and multipliers all lie in $\mathbb{K}$. In fact, our proof does not rely on the arithmetic properties of such number fields, but only on the fact that these are both countable and dense in $\mathbb{C}$. Thus, we show the more general result below.

Recall that a \emph{Fr\'{e}chet space} is a complete, metrizable, locally convex topological vector space. Equivalently, a Fr\'{e}chet space is a real vector space $\mathcal{F}$ equipped with a separating countable family of seminorms for which $\mathcal{F}$ is complete. For example, the set of all entire maps endowed with the topology of local uniform convergence on $\mathbb{C}$ forms a Fr\'{e}chet space. Also note that every Banach space is a Fr\'{e}chet space. Finally, given a set $F$ and topologies $\mathcal{T}_{1}, \mathcal{T}_{2}$ on $F$, recall that $\mathcal{T}_{1}$ is \emph{finer} than $\mathcal{T}_{2}$ if each open subset of $F$ with respect to $\mathcal{T}_{2}$ is also open with respect to $\mathcal{T}_{1}$.

\begin{theorem}
\label{theorem:entire}
Assume that $\mathcal{F}$ is a Fr\'{e}chet space of entire maps that contains all the complex polynomial maps and whose topology is finer than the topology of local uniform convergence on $\mathbb{C}$, $E$ is a countable and dense subset of $\mathbb{C}$ and $\Lambda$ is a dense subset of $\mathbb{C}$. Then the set of $f \in \mathcal{F}$ such that
\begin{itemize}
\item $f^{-1}(E) = E$,
\item the periodic points of $f$ all lie in $E$,
\item the multipliers of $f$ at its cycles all lie in $\Lambda$
\end{itemize}
is dense in $\mathcal{F}$.
\end{theorem}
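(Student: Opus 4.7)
The plan is to obtain the desired entire map as a small perturbation $g = f + \sum_{n \geq 1} p_n$ of an arbitrary given $f \in \mathcal{F}$ by complex polynomials $p_n$. Since $\mathcal{F}$ contains every complex polynomial, each $p_n$ belongs to $\mathcal{F}$, and by arranging $\|p_n\|$ to be summable in every seminorm defining the Fr\'{e}chet topology, the limit $g$ lies in $\mathcal{F}$ and remains as close to $f$ as we wish.

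I would enumerate $E = \{a_0, a_1, \ldots\}$ and construct the partial sums $f_n := f + p_1 + \cdots + p_n$ by induction so that, at stage $n$: $f_n(a_k) \in E$ for every $k \leq n$; every preimage of $a_0, \ldots, a_n$ under $f_n$ inside the disk of radius $n$ lies in $E$; and every periodic point of $f_n$ of period at most $n$ lies in $E$ and has multiplier in $\Lambda$. I would collect the finitely many witnesses of these conditions into a set $S_n \subset E$ and require every later perturbation $p_m$, $m > n$, to vanish on $S_n$, with order at least two at the periodic points in $S_n$. The first vanishing preserves the values of $f_n$ at $S_n$, the second additionally preserves the derivatives at the cycle points, so the multipliers are preserved as well. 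A Hurwitz/Rouch\'{e} argument then shows that, as the radius $n$ goes to infinity, every preimage of every $a_k$ under the limit $g$ is eventually captured in some $S_n$, so each condition becomes permanent and one obtains $g^{-1}(E) = E$ together with the desired control on periodic points and multipliers.

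The central technical step is the construction of $p_{n+1}$. Starting from $f_n$, one must simultaneously push $f_{n+1}(a_{n+1})$ into $E$, push into $E$ the finitely many preimages of $a_0, \ldots, a_{n+1}$ inside the disk of radius $n+1$ that are not already there, and push every periodic cycle of $f_{n+1}$ of period $\leq n+1$ into $E$ with multiplier in $\Lambda$. Each of these amounts to a finite list of equations in the coefficients of $p_{n+1}$, and searching within polynomials of sufficiently high degree obeying the vanishing conditions on $S_n$ leaves enough free parameters. The implicit function theorem then solves the system provided the relevant differentials are surjective, which reduces to a classical nondegeneracy computation: at a simple fixed point $z$ of $f_n$, perturbing by $\varepsilon q$ moves $z$ to first order by $-q(z)/(f_n'(z)-1)$ and shifts its multiplier by $q'(z) - f_n''(z) q(z)/(f_n'(z)-1)$, so independent choices of $q(z)$ and $q'(z)$ yield independent control over position and multiplier; analogous calculations handle longer cycles and preimages. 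Since $E$ and $\Lambda$ are dense in $\mathbb{C}$, the targets may be chosen in $E$ and $\Lambda$ arbitrarily close to the current values, making $p_{n+1}$ as small as required.

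The main obstacle I anticipate is that $f_{n+1}$ genuinely has more periodic points and more preimages than $f_n$: increasing the degree of the polynomial perturbation creates new fixed points of each iterate, all of which, if of period at most $n+1$, must also land in $E$ with multipliers in $\Lambda$, not merely those already present in $f_n$. Furthermore, the implicit function argument degenerates at parabolic cycles, at multipliers equal to $1$, and at multiple preimages, so one would first apply a small generic preliminary perturbation making all periodic points of period $\leq n+1$ and all relevant preimages of $f_n + p_{n+1}$ simple (a residual condition inside the finite-dimensional family of polynomial perturbations) and only then perform the adjustment into $E$ and $\Lambda$. Once this nondegenerate setting is reached at each stage, controlling $\|p_n\|$ delivers the limit $g$, and the verification of the three conclusions reduces to the observation that each imposed condition becomes stable after finitely many stages.
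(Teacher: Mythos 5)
Your proposal follows the same overall strategy as the paper — inductively perturb $f$ by polynomials that vanish on a growing finite set of locked-in points (to order two at cycle points, to order one elsewhere), controlling forward images, preimages in increasing disks, and cycles of increasing period, then pass to the Cauchy limit — but the technical core differs. You produce each perturbation via the implicit function theorem, which forces a nondegeneracy hypothesis; to secure it you propose a preliminary generic perturbation killing parabolic cycles and multiple preimages. The paper instead writes the perturbing polynomials explicitly by Hermite-type interpolation (Lemmas~\ref{lemma:entireVal} and~\ref{lemma:entireDiff}), and it removes $1$ from $\Lambda$ once and for all at the outset, so that any cycle placed in $E$ with multiplier in $\Lambda$ is automatically a simple periodic point and the preliminary genericity step is unnecessary. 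That trick also patches a latent gap in your scheme: if an already-captured cycle had multiplier exactly $1$, your genericity perturbation could not make it simple without destroying the very value-and-derivative conditions it is required to preserve, so you too would need to restrict $\Lambda$ to $\mathbb{C} \setminus \{1\}$. Within each stage the paper also avoids solving for everything at once: Lemmas~\ref{lemma:entirePreim} and~\ref{lemma:entireCycle} capture preimages and cycles one at a time, with a Rouch\'{e}-bounded count guaranteeing that the substep loop terminates after finitely many iterations — a counting argument in place of your surjectivity-of-differentials argument. Your first-order computation of how a perturbation moves a simple fixed point and shifts its multiplier is correct, and your Hurwitz/Rouch\'{e} endgame matches what the paper does to show each condition persists in the limit; overall your route is workable once the $\Lambda \setminus \{1\}$ normalization is added, but the paper's explicit interpolation and counting argument are more elementary and avoid the implicit function theorem entirely.
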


Suppose now that $\mathbb{K} \subset \mathbb{C}$ is a number field that is not contained in $\mathbb{R}$. Letting $\mathcal{F}$ be the usual Fr\'{e}chet space of all entire maps and setting $E = \Lambda = \mathbb{K}$, Theorem~\ref{theorem:entire} shows that there exists a non-affine entire map $f \colon \mathbb{C} \rightarrow \mathbb{C}$ that is neither a power map nor a Chebyshev map and whose preperiodic points and multipliers all lie in $\mathbb{K}$; this map $f$ is necessarily transcendental by Theorem~\ref{theorem:ratMult}. Alternatively, taking $\mathcal{F}$ to be a Fr\'{e}chet space of entire maps that contains all the complex polynomials as a non-dense subset and whose topology is finer than the topology of local uniform convergence on $\mathbb{C}$, one can avoid invoking Theorem~\ref{theorem:ratMult} and directly use Theorem~\ref{theorem:entire} to prove the existence of a transcendental entire map $f \colon \mathbb{C} \rightarrow \mathbb{C}$ whose preperiodic points and multipliers all lie in $\mathbb{K}$. This shows that Theorems~\ref{theorem:ratPreper} and~\ref{theorem:ratMult} do not hold for general entire maps.

Our proof of Theorem~\ref{theorem:entire} consists in adjusting the dynamics of any given entire map by a succession of small perturbations. This method can also be adapted to prove analogues of Theorem~\ref{theorem:entire} with various additional symmetry conditions. As an illustration, we prove the result below, which treats the case of real and even entire maps.

\begin{theorem}
\label{theorem:realEven}
Assume that
\begin{itemize}
\item $\mathcal{F}$ is a Fr\'{e}chet space of real and even entire maps that contains all the real and even polynomial maps and whose topology is finer than the topology of local uniform convergence on $\mathbb{C}$,
\item $E$ is a countable and dense subset of $\mathbb{C}$ that is symmetric with respect to the real and imaginary axes and such that $E \cap (\mathbb{R} \cup i \mathbb{R})$ is dense in $\mathbb{R} \cup i \mathbb{R}$,
\item $\Lambda$ is a dense subset of $\mathbb{C}$ that is symmetric with respect to the real axis and such that $\Lambda \cap \mathbb{R}$ is dense in $\mathbb{R}$.
\end{itemize}
Then the set of $f \in \mathcal{F}$ such that
\begin{itemize}
\item $f^{-1}(E) = E$,
\item the periodic points of $f$ all lie in $E$,
\item the multipliers of $f$ at its cycles all lie in $\Lambda$
\end{itemize}
is dense in $\mathcal{F}$.
\end{theorem}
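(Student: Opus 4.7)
The plan is to follow the proof of Theorem~\ref{theorem:entire} with every approximating polynomial perturbation chosen inside the real subspace of even polynomials, i.e.\ in $\mathbb{R}[z^2]$, so that the sequence stays in $\mathcal{F}$. Given $f \in \mathcal{F}$ and an open neighborhood $\mathcal{U}$ of $f$ in $\mathcal{F}$, I would build recursively $f_0 = f$ and $f_{n+1} = f_n + h_n$, with $h_n$ a real and even polynomial of small enough $\mathcal{F}$-norm that $f_n$ converges to some $f_\infty \in \mathcal{U}$. A fixed enumeration of objectives, namely: for every $w \in E$, all preimages of $w$ lie in $E$; for every $z \in E$, $f_\infty(z) \in E$; every periodic cycle of $f_\infty$ is contained in $E$ and has multiplier in $\Lambda$, is attacked one step at a time, each new objective being achieved without disturbing the finite set of previously secured ones.

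The symmetry hypotheses on $E$ and $\Lambda$ are exactly what the symmetry of a real and even entire map forces. The full preimage of any point is closed under $\zeta \mapsto \bar{\zeta}$ and $\zeta \mapsto -\zeta$; periodic cycles come in complex-conjugate pairs, and a self-conjugate cycle, meaning $\mathcal{C} = \bar{\mathcal{C}}$ as a set, automatically has real multiplier. Moreover, since $f(i \mathbb{R}) \subseteq \mathbb{R}$, preimages of real points may lie on the imaginary axis, which is exactly why $E \cap (\mathbb{R} \cup i \mathbb{R})$ must be dense in $\mathbb{R} \cup i \mathbb{R}$ rather than merely $E \cap \mathbb{R}$ dense in $\mathbb{R}$.

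The technical heart is a symmetric version of the interpolation/perturbation lemma underlying Theorem~\ref{theorem:entire}: given finite prescribed data consisting of target preimage assignments in $E$ and target cycles in $E$ with multipliers in $\Lambda$, assembled so that the whole configuration is invariant under $\zeta \mapsto \bar{\zeta}$ and $\zeta \mapsto -\zeta$ and so that the real or self-conjugate parts take values in $E \cap (\mathbb{R} \cup i \mathbb{R})$ and $\Lambda \cap \mathbb{R}$, one must produce an arbitrarily $\mathcal{F}$-small real and even polynomial $h$ such that $f + h$ realizes these data while fixing any prescribed finite set of previously placed preimages and cycles. Existence reduces to a Hermite-type interpolation problem in $\mathbb{R}[z^2]$, solvable because the prescribed values are themselves invariant under the two required symmetries; smallness follows by interpolating in sufficiently high degree, using that the topology of $\mathcal{F}$ is finer than local uniform convergence so that polynomial perturbations of high degree can be made $\mathcal{F}$-arbitrarily small.

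The main obstacle I expect is the verification of this symmetric interpolation lemma together with the implicit-function argument controlling how cycles and their multipliers vary with $h$: since $h$ lies in $\mathbb{R}[z^2]$, its effective real dimension per unit of degree is cut by a factor of four compared to a general complex polynomial, so one must check that imposing only the symmetry-invariant subset of targets cuts the number of independent real equations by exactly the same factor, leaving a well-posed system. Once this local symmetric perturbation step is in place, the enumeration scheme, the preservation of already-secured data, and the $\mathcal{F}$-convergence argument from the proof of Theorem~\ref{theorem:entire} carry over without change.
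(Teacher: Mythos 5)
Your overall strategy (recursive perturbations by real and even polynomials, symmetric interpolation data, preservation of previously secured data, convergence in $\mathcal{F}$) is essentially the same as the paper's, and your observations about the symmetry constraints on $E$ and $\Lambda$ and about why $E \cap (\mathbb{R} \cup i\mathbb{R})$ must be dense are on target. The paper indeed builds explicit families of real and even interpolating polynomials (two variants, one for prescribed points on $\mathbb{R} \cup i\mathbb{R}$ and one for prescribed points off the axes) and then mirrors the three adjustment lemmas and the recursion from the non-symmetric proof.

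However, you miss a genuine obstruction, and it is precisely the one the paper singles out as the place where the real-even case departs from the general case. Every real and even entire map $f$ satisfies $f'(0)=0$, so $0$ is automatically a critical point. Consequently, if $0$ is a periodic point of $f$, the multiplier at its cycle is forced to equal $0$, and there is nothing in the hypotheses guaranteeing $0 \in \Lambda$. Your interpolation step, which adjusts multipliers of cycles to land in $\Lambda$, cannot possibly succeed on a cycle through $0$, because the multiplier there is not a free parameter: it is pinned at $0$ no matter what real even perturbation you add. Your ``well-posedness by dimension count'' heuristic actually fails here, since there is one equation ($\text{multiplier} = \lambda$) with zero degrees of freedom. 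The fix used in the paper, for the case $0 \in E$, is to add an explicit constraint to the recursion controlling the forward orbit of $0$ (requiring $f_n^{\circ n}(0)$ to lie in $E$ but outside the set of points already frozen and their symmetric images), so that $0$ never becomes periodic for any $f_n$ nor for the limit. Without some such device, a cycle through $0$ can appear at an intermediate stage (or in the limit) and your argument has no way to handle it. Secondarily, you should also note that your adjustment-of-cycles lemma needs the standing hypothesis that $0$ is not periodic with the current period bound; this is the additional assumption the paper builds into its analogue of the cycle lemma.
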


Note that, for every number field $\mathbb{K} \subset \mathbb{C}$ that is invariant under complex conjugation but not contained in $\mathbb{R}$, Theorem~\ref{theorem:realEven} implies the existence of real, even and transcendental entire maps whose preperiodic points and multipliers all lie in $\mathbb{K}$.

\begin{remark}
\label{remark:general}
In fact, our proofs show that we can replace $\Lambda$ in Theorems~\ref{theorem:entire} and~\ref{theorem:realEven} by a sequence $\left( \Lambda_{n} \right)_{n \geq 1}$ of subsets of $\mathbb{C}$ with the same properties and ask instead that the multipliers of $f$ at its cycles with period $n$ all lie in $\Lambda_{n}$ for all $n \in \mathbb{Z}_{\geq 1}$.
\end{remark}

\begin{remark}
Our proofs of Theorems~\ref{theorem:entire} and~\ref{theorem:realEven} crucially use the assumption that the topology of the Fr\'{e}chet space $\mathcal{F}$ of entire maps is finer than the topology of local uniform convergence on $\mathbb{C}$. We note that there exist Fr\'{e}chet spaces of entire maps that do not have this property. One can construct such Fr\'{e}chet spaces as follows: Denote by $F$ the vector space of all entire maps and by $\mathcal{F}_{0}$ the usual Fr\'{e}chet space of all entire maps, which has the topology of local uniform convergence. For every $\Phi \in \GL(F)$, there exists a unique Fr\'{e}chet space $\mathcal{F}_{\Phi}$ whose underlying vector space is $F$ and such that $\Phi \colon \mathcal{F}_{0} \rightarrow \mathcal{F}_{\Phi}$ is a homeomorphism. Choose a sequence $\left( f_{n} \right)_{n \geq 0}$ of elements of $F$ that converges to $f \in F \setminus S$ in $\mathcal{F}_{0}$ and choose $g \in F \setminus S$ different from $f$, where $S$ denotes the vector subspace of $F$ spanned by $\left\lbrace f_{n} : n \geq 0 \right\rbrace$. Then there exists $\Phi \in \GL(F)$ such that $\Phi\left( f_{n} \right) = f_{n}$ for all $n \geq 0$ and $\Phi(f) = g$. Setting $\mathcal{F} = \mathcal{F}_{\Phi}$, the sequence $\left( f_{n} \right)_{n \geq 0}$ converges to $g$ in $\mathcal{F}$, and in particular the topology of $\mathcal{F}$ is not finer than the topology of local uniform convergence on $\mathbb{C}$.
\end{remark}

Finally, we apply Theorem~\ref{theorem:realEven} in order to prove the existence of a transcendental entire map with various properties, including that of having an escaping quadratic-like map restriction with rational multipliers. An \emph{escaping quadratic-like map} is a holomorphic covering map $f \colon U \rightarrow V$ of degree $2$, where $U, V$ are nonempty open subsets of $\mathbb{C}$ such that $U \Subset V$ and $V$ is simply connected. In this case, note that $U$ has precisely two connected components, which are mapped biholomorphically onto $V$ by $f$. We say that $f$ is \emph{conjugate to an affine escaping quadratic-like map} if there exists a univalent map $\varphi \colon V \rightarrow \mathbb{C}$ such that $\varphi \circ f \circ \varphi^{-1}$ is affine on each of the two connected components of $\varphi(U)$. In order to present a modified version of Ji and Xie's proof of~\cite[Theorem~1.12]{JX2023}, which generalizes Milnor's conjecture about rational maps with integer multipliers, the first and third authors together with Gauthier and Raissy proved the following:

\begin{theorem}[{\cite[Proposition~14]{BGHR2023}}]
\label{theorem:entMult}
Assume that $\mathcal{O}_{\mathbb{K}}$ is the ring of integers of an imaginary quadratic field $\mathbb{K}$ and $f \colon U \rightarrow V$ is an escaping quadratic-like map whose multiplier at each cycle lies in $\mathcal{O}_{\mathbb{K}}$. Then $f$ is conjugate to an affine escaping quadratic-like map.
\end{theorem}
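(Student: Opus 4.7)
The plan is to adapt the Ji--Xie strategy for rational maps with integer multipliers to the quadratic-like setting, exploiting the symbolic Bernoulli dynamics afforded by the escaping hypothesis. Write $U = U_{0} \sqcup U_{1}$ and let $g_{i} = (f|_{U_{i}})^{-1} \colon V \to U_{i}$ for $i \in \{0,1\}$; each $g_{i}$ is a strict contraction of $V$ in its hyperbolic metric. Every primitive periodic word $w = w_{0} \cdots w_{n-1} \in \{0,1\}^{n}$ yields a contraction $g_{w} = g_{w_{0}} \circ \cdots \circ g_{w_{n-1}}$ whose unique fixed point $p_{w}$ is a periodic point of $f$ of period $n$, and the multiplier of $f^{n}$ at $p_{w}$ is $\lambda_{w} = 1/g_{w}'(p_{w}) \in \mathcal{O}_{\mathbb{K}}$. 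Our goal then becomes to show that, in a common univalent coordinate on $V$, the two inverse branches $g_{0}$ and $g_{1}$ are affine.

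The key arithmetic input I would exploit is that in an imaginary quadratic field the modulus squared of any algebraic integer equals its norm, hence lies in $\mathbb{Z}_{\geq 0}$; in particular, any nonzero element of $\mathcal{O}_{\mathbb{K}}$ has modulus at least $1$, so $|\alpha - \beta| < 1$ with $\alpha, \beta \in \mathcal{O}_{\mathbb{K}}$ forces $\alpha = \beta$. This discreteness plays the role that Northcott's theorem plays in the rational case. I would then combine it with an asymptotic comparison of multipliers $\lambda_{w}$ and $\lambda_{w'}$ for symbolic words that differ in only one or two coordinates: the uniform exponential contraction of $g_{w}$, together with a chain-rule expansion, manufactures pairs of elements of $\mathcal{O}_{\mathbb{K}}$ whose difference tends to $0$ while remaining nonzero unless a certain cocycle identity on the derivatives of $g_{0}$ and $g_{1}$ along the Julia set holds. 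The integrality lower bound above forces that cocycle to vanish identically.

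Translated back, this vanishing would say that the Schwarzian (or equivalently the logarithmic derivative $g_{i}''/g_{i}'$) is identically zero on a perfect subset of the Julia set; by holomorphic continuation to $V$, the inverse branches $g_{0}$ and $g_{1}$ themselves are affine, so the identity coordinate $\varphi = \mathrm{id}$ exhibits $f$ as an affine escaping quadratic-like map on each component of $U$. The main obstacle lies in the asymptotic comparison step: one must select the families of periodic orbits cleverly and control the error terms sharply enough that, after subtraction, the residue has modulus strictly less than $1$ for infinitely many pairs, since only then does the arithmetic rigidity produce a nontrivial identity. This is the analogue of the delicate height and equidistribution estimates on the Riemann sphere underlying \cite{JX2022}, and it must be carried out here using only the Bernoulli structure on the Cantor Julia set together with distortion estimates for iterated contractions.
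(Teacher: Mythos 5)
This statement is cited from~\cite[Proposition~1]{BGHR2022} and is \emph{not} proved in the present paper, so there is no in-paper argument to compare against line by line. That said, a few things can be said about your sketch on its own merits.

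Your general strategy --- code the periodic orbits of an escaping quadratic-like map by the full two-shift via the inverse branches $g_{0}, g_{1}$, exploit the discreteness of $\mathcal{O}_{\mathbb{K}}$ (nonzero elements have modulus at least $1$, since $|\alpha|^{2}$ is the norm), and compare multipliers of long symbolic words differing in a bounded number of coordinates --- is the natural quadratic-like analogue of the Ji--Xie mechanism and is very likely in the spirit of the cited proof. So the outline is reasonable.

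However, the proposal contains one genuine logical error and one admitted gap. The error is in the conclusion: you claim that the vanishing of the relevant cocycle forces the logarithmic derivative $g_{i}''/g_{i}'$ (and hence the nonlinearity of $g_{i}$) to vanish identically on $V$, so that $g_{i}$ is affine in the \emph{given} coordinate and one may take $\varphi = \mathrm{id}$. This cannot be right: multipliers of cycles are invariant under conformal conjugacy, so the hypothesis that all multipliers lie in $\mathcal{O}_{\mathbb{K}}$ cannot possibly distinguish a map that is literally affine on each component from one that is only conjugate to such a map. The correct conclusion of the argument must be that the nonlinearity data form a coboundary --- equivalently, that the multiplier spectrum coincides with that of an affine model, and then Sullivan-type rigidity for expanding conformal repellers provides the conjugating univalent map $\varphi$. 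As written, your last paragraph overreaches. The admitted gap is the asymptotic comparison step itself: you explicitly state that the error terms must be controlled "sharply enough that, after subtraction, the residue has modulus strictly less than $1$ for infinitely many pairs," and you do not carry this out. Since the entire arithmetic leverage of the argument is concentrated in exactly that estimate, what you have is a plan rather than a proof: the crucial quantitative lemma is missing, and the conclusion drawn from it is stated incorrectly.
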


In light of Theorem~\ref{theorem:entMult}, one may ask whether every escaping quadratic-like map that has only rational multipliers is conjugate to an affine escaping quadratic-like map. This question was the initial motivation for our study. A positive answer to this question would provide an alternative proof of Theorem~\ref{theorem:ratMult}. However, we prove here that the answer is negative. More precisely, as an application of Theorem~\ref{theorem:realEven} and Remark~\ref{remark:general}, we obtain the result below. Here, $\mathbb{D} \subset \mathbb{C}$ denotes the unit disk.

\begin{theorem}
\label{theorem:quadLike}
There exists a real, even and transcendental entire map $f \colon \mathbb{C} \rightarrow \mathbb{C}$ such that
\begin{itemize}
\item $f \colon \mathbb{R} \rightarrow \mathbb{R}$ is convex,
\item $f(\mathbb{Q}) \subset \mathbb{Q}$,
\item the periodic points of $f \colon f^{-1}(\mathbb{D}) \cap \mathbb{D} \rightarrow \mathbb{D}$ and $f \colon \mathbb{R} \rightarrow \mathbb{R}$ coincide and all lie in $\mathbb{Q}$, and the multipliers of $f$ at these periodic points all lie in $\mathbb{Q}$,
\item $f \colon f^{-1}(\mathbb{D}) \cap \mathbb{D} \rightarrow \mathbb{D}$ is an escaping quadratic-like map that is not conjugate to an affine escaping quadratic-like map.
\end{itemize}
\end{theorem}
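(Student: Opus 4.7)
The strategy is to apply Theorem~\ref{theorem:realEven} together with Remark~\ref{remark:general} to a Fréchet space $\mathcal{F}$ of real even entire functions chosen so that a suitable transcendental base map $f_{0}$ already satisfies the geometric conditions of Theorem~\ref{theorem:quadLike} in an open $\mathcal{F}$-neighborhood; the theorem is then used only to correct the arithmetic, without destroying these conditions.

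Take $f_{0}(z) = A\cosh(z) - B$ with $A, B > 0$ real, chosen so that $A - B \ll -1$, $|f_{0}| > 1$ on $\partial \mathbb{D}$, and the two real fixed points of $f_{0}$ both lie in $f_{0}^{-1}(\mathbb{D}) \cap \mathbb{D}$; the choice $A = 10$, $B = 12$ works. Then $f_{0}$ is real, even, transcendental, and convex on $\mathbb{R}$ since $f_{0}''(x) = A\cosh(x) > 0$; moreover $f_{0} \colon f_{0}^{-1}(\mathbb{D}) \cap \mathbb{D} \to \mathbb{D}$ is an escaping quadratic-like map whose non-escaping set is a Cantor subset of $\mathbb{R}$ (because $f_{0}|_{\mathbb{R}}$ is convex, so its restriction to each of the two real components of $f_{0}^{-1}(\mathbb{D}) \cap \mathbb{D}$ is a monotone bijection onto $(-1,1)$, uniquely coding every non-escaping orbit by an element of $\{1,2\}^{\mathbb{N}}$ on the real axis). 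By evenness, the unique 2-cycle is $\{a, -a\}$ with $A\cosh(a) = B - a$, hence 2-cycle multiplier $-A^{2}\sinh^{2}(a)$; one checks (numerically, or via a genericity argument in the two-parameter family) that this differs from the product $A^{2}\sinh(x_{+})\sinh(x_{-})$ of the fixed-point multipliers, which forces $f_{0}$ not to be conjugate to an affine escaping quadratic-like map, since in the affine model the 2-cycle multiplier always coincides with the product of the fixed-point multipliers.

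Take $\mathcal{F}$ to be the Banach space of real even entire functions $f$ with $\|f\| := \sup_{z \in \mathbb{C}} |f(z)| e^{-|z|} < \infty$. This is a Fréchet space containing $f_{0}$ and all real even polynomial maps, with topology strictly finer than local uniform convergence. A Cauchy estimate on a disk of radius $2$ yields $|g''(x)| \leq (e^{2}/2)\,\|g\|\, e^{|x|}$ for all $x \in \mathbb{R}$, and since $f_{0}''(x) \geq (A/2) e^{|x|}$, one has $f_{0}'' + g'' > 0$ on $\mathbb{R}$ whenever $\|g\| < A/e^{2}$; thus convexity of $f_{0} + g$ on $\mathbb{R}$ is preserved. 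The remaining geometric conditions ($|f| > 1$ on the compact set $\partial \mathbb{D}$; quadratic-like degree $2$; all real periodic points of $f$ inside $f^{-1}(\mathbb{D}) \cap \mathbb{D}$; 2-cycle multiplier strictly different from the product of the fixed-point multipliers) are open in $\mathcal{F}$ by continuous dependence of multipliers and of $f$ itself on $f \in \mathcal{F}$, and therefore persist in a small $\|\cdot\|$-neighborhood of $f_{0}$. Finally, since $|f_{0}(x)| e^{-|x|} \to A/2$ as $|x| \to \infty$ while $|P(x)| e^{-|x|} \to 0$ for any polynomial $P$, one has $\|f_{0} - P\| \geq A/2$, so any $f$ sufficiently close to $f_{0}$ in $\mathcal{F}$ is automatically transcendental. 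Now apply Theorem~\ref{theorem:realEven} together with Remark~\ref{remark:general} with $E = \mathbb{Q}(i)$ and $\Lambda_{n} = \mathbb{Q}(i)$ for every $n \geq 1$; all density/symmetry hypotheses hold. Density in $\mathcal{F}$ yields $f$ in any prescribed $\|\cdot\|$-neighborhood of $f_{0}$ satisfying $f^{-1}(\mathbb{Q}(i)) = \mathbb{Q}(i)$, with all periodic points and all multipliers in $\mathbb{Q}(i)$. Such an $f$ inherits from $f_{0}$ all the geometric properties above; realness gives $f(\mathbb{Q}) \subseteq \mathbb{R} \cap \mathbb{Q}(i) = \mathbb{Q}$; the periodic points of $f \colon f^{-1}(\mathbb{D}) \cap \mathbb{D} \to \mathbb{D}$ coincide with the real periodic points of $f$ (by monotonicity coming from convexity together with the openness of ``all real periodic points in $f^{-1}(\mathbb{D}) \cap \mathbb{D}$''), hence lie in $\mathbb{R} \cap \mathbb{Q}(i) = \mathbb{Q}$, with multipliers in $\mathbb{Q}(i) \cap \mathbb{R} = \mathbb{Q}$. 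All conclusions of Theorem~\ref{theorem:quadLike} follow.

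The main obstacle is producing the non-affine witness rigorously: one must verify that $\sinh^{2}(a) + \sinh(x_{+})\sinh(x_{-}) \neq 0$ for some explicit (or an open set of) parameters $(A, B)$, which can be done either by explicit numerical computation with rigorous error bounds or by a genericity/analyticity argument in the two-parameter family $A\cosh(z) - B$, ruling out that the relation holds identically (for instance by expanding in the limit $B \to +\infty$ with $A/B$ suitably tuned). A secondary subtle point is the openness in $\mathcal{F}$ of the condition ``all real periodic points of $f$ lie in $f^{-1}(\mathbb{D}) \cap \mathbb{D}$'', which requires combining the strong repulsion of orbits of $f_{0}$ outside this set with the exponential-type control at infinity provided by the norm $\|\cdot\|$, since $\mathbb{R}$ itself is noncompact.
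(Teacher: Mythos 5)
Your overall architecture (start from $f_{0} = 10\cosh - 12$ in a Banach space of real even entire maps, verify that convexity, transcendence, and the escaping quadratic-like structure persist under small perturbation, then invoke Theorem~\ref{theorem:realEven} for the arithmetic) matches the paper's. However, there are two genuine gaps, the first of which is fatal as written. First, your computation of the period-$2$ multiplier is based on a false premise: for an even map $f$, a pair $\{a, -a\}$ with $a \neq 0$ can never be a $2$-cycle, since $f(-a) = f(a)$, so $f(a) = -a$ forces $f(-a) = -a$, i.e.\ $-a$ is a fixed point. The actual $2$-cycle $\{a,b\}$ of the escaping quadratic-like restriction of $f_{0}$ has $a \in U_{-}$, $b \in U_{+}$, and $b \neq -a$, so the quantity you would need to compare with $f_{0}'(x_{+})f_{0}'(x_{-})$ is $f_{0}'(a)f_{0}'(b)$ with $b = f_{0}(a)$, not $-A^{2}\sinh^{2}(a)$. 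Since the non-affineness is the whole point of the theorem, and you leave its verification for $f_{0}$ open (``numerically, or via a genericity argument''), the argument does not close.

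Second, and this is where the paper does something genuinely clever that you miss: by setting $\Lambda_{n} = \mathbb{Q}(i)$ for every $n$, you are not using Remark~\ref{remark:general} at all, so you get no arithmetic obstruction and are forced to establish the non-affine inequality for $f_{0}$ and propagate it by openness. The paper instead invokes Remark~\ref{remark:general} with distinct targets: multipliers of fixed points land in $\mathbb{Q}_{0}$ (rationals with odd numerator and even denominator, a set closed under multiplication) while the multiplier at the unique $2$-cycle lands in $\mathbb{Q}_{1}$ (rationals with odd denominator); since $\mathbb{Q}_{0} \cap \mathbb{Q}_{1} = \varnothing$, the product of the fixed-point multipliers cannot equal the $2$-cycle multiplier, and non-conjugacy to an affine model is automatic, with no numerics or genericity argument needed. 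The paper also avoids your ``secondary subtle point'' about openness of the condition that all real periodic points lie in $f^{-1}(\mathbb{D}) \cap \mathbb{D}$: its Lemma~\ref{lemma:samePer} shows this holds for every $f$ near $f_{0}$ directly from convexity, $f(0) < -1$, and $f(1) > 1$ (one gets $f(x) > x$ on $[1,\infty)$, hence no real periodic points outside $(-1,1)$), rather than by a perturbative openness argument on the noncompact set $\mathbb{R}$.
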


\begin{remark}
Assume here that $\mathbb{K} \subset \mathbb{C}$ is a number field that is not contained in $\mathbb{R}$. Then one can directly make use of Theorem~\ref{theorem:entire} to prove the existence of an escaping quadratic-like map that is not conjugate to an affine escaping quadratic-like map and whose multipliers all lie in $\mathbb{K}$. More precisely, one may proceed as follows: As already explained, Theorem~\ref{theorem:entire} implies the existence of a transcendental entire map $f \colon \mathbb{C} \rightarrow \mathbb{C}$ whose multipliers all lie in $\mathbb{K}$. Then there exist open subsets $U, V$ of $\mathbb{C}$ and $n \in \mathbb{Z}_{\geq 1}$ such that $f^{\circ n} \colon U \rightarrow V$ is a well-defined escaping quadratic-like map (see~\cite[Proposition~B.3]{B2000}, which follows from the Ahlfors five islands theorem). Note that the multipliers of $f^{\circ n} \colon U \rightarrow V$ all lie in $\mathbb{K}$. Furthermore, $f^{\circ n} \colon U \rightarrow V$ is not conjugate to an affine escaping quadratic-like map since $f \colon \mathbb{C} \rightarrow \mathbb{C}$ is neither a power map nor a Chebyshev map (see~\cite[Lemma~12]{BGHR2023}, which follows from the results of~\cite{R1922}).
\end{remark}

After writing this article, the first and third authors together with Gauthier and Raissy realized that Theorem~\ref{theorem:entMult} also yields the analogue of Milnor's conjecture for entire maps. Thus, they obtained the result below about entire maps with integer multipliers, which also contrasts with Theorem~\ref{theorem:entire}.

\begin{theorem}[{\cite[Theorem~2]{BGHR2023}}]
Assume that $\mathcal{O}_{\mathbb{K}}$ is the ring of integers of an imaginary quadratic field $\mathbb{K} \subset \mathbb{C}$ and $f \colon \mathbb{C} \rightarrow \mathbb{C}$ is a non-affine entire map whose multiplier at each cycle lies in $\mathcal{O}_{\mathbb{K}}$. Then $f$ is a power map or a Chebyshev map.
\end{theorem}

Ji, Xie and Zhang also later generalized Theorem~\ref{theorem:ratMult}, showing that power maps, Chebyshev maps and Latt\`{e}s maps are the only rational maps whose multipliers all have a modulus in a given number field (see~\cite[Theorem~1.4]{JXZ2023}).

In Section~\ref{section:entire}, we prove Theorem~\ref{theorem:entire} by perturbative arguments. In Section~\ref{section:realEven}, we adapt our proof of Theorem~\ref{theorem:entire} in order to prove Theorem~\ref{theorem:realEven}. In Section~\ref{section:quadLike}, we prove Theorem~\ref{theorem:quadLike} by applying Theorem~\ref{theorem:realEven} and Remark~\ref{remark:general} in a particular setting.

\section{Proof of Theorem~\ref{theorem:entire}}
\label{section:entire}

Throughout this section, we fix a Fr\'{e}chet space $\mathcal{F}$ of entire maps that contains all the complex polynomial maps and whose topology is finer than the topology of local uniform convergence on $\mathbb{C}$. We denote by $\left( \lVert . \rVert_{j} \right)_{j \geq 0}$ a sequence of seminorms associated to $\mathcal{F}$, and we define the distance $d_{\mathcal{F}}$ on $\mathcal{F}$ by \[ d_{\mathcal{F}}(f, g) = \sum_{j = 0}^{+\infty} 2^{-j} \min\left\lbrace 1, \lVert f -g \rVert_{j} \right\rbrace \, \text{,} \] which induces the topology of $\mathcal{F}$ and makes it a complete metric space.

Throughout this section, we also fix a countable and dense subset $E$ of $\mathbb{C}$ and a dense subset $\Lambda$ of $\mathbb{C}$. Removing $1$ from $\Lambda$ if necessary, we assume that $\Lambda \subseteq \mathbb{C} \setminus \lbrace 1 \rbrace$.

We define \[ \mathcal{E} = \lbrace z \mapsto a z +b : a, b \in \mathbb{C} \rbrace \] to be the set of affine maps on $\mathbb{C}$, which forms a vector space of real dimension $4$, and hence a closed subset of $\mathcal{F}$ with empty interior.

Given $f \in \mathcal{F}$ and a subset $A$ of $\mathbb{C}$, we define \[ \mathcal{F}(f, A) = \left\lbrace g \in \mathcal{F} : g \vert_{A} = f \vert_{A} \text{ and } g^{\prime} \vert_{A} = f^{\prime} \vert_{A} \right\rbrace \, \text{,} \] which is closed in $\mathcal{F}$ because the topology of $\mathcal{F}$ is finer than the topology of local uniform convergence. Given $f \in \mathcal{F}$ and subsets $A, C$ of $\mathbb{C}$, we define \[ \mathcal{F}(f, A, C) = \left\lbrace g \in \mathcal{F} : g \vert_{A \cup C} = f \vert_{A \cup C} \text{ and } g^{\prime} \vert_{C} = f^{\prime} \vert_{C} \right\rbrace \, \text{,} \] which is also closed in $\mathcal{F}$ for the same reason.

We shall now show Theorem~\ref{theorem:entire}. Our proof proceeds roughly as follows: We note that an entire map $f \colon \mathbb{C} \rightarrow \mathbb{C}$ satisfies the conditions of Theorem~\ref{theorem:entire} if and only if it satisfies countably many conditions, each one referring to values and derivatives of $f$ at finitely many points. Now, suppose that $g \in \mathcal{F}$ satisfies finitely many of these conditions. Then there exist finite subsets $A, C$ of $\mathbb{C}$ such that each $h \in \mathcal{F}(g, A, C)$ sufficiently close to $g$ satisfies the same conditions as $g$. Adding well-chosen small polynomials to $g$, we prove that one can find $h \in \mathcal{F}(g, A, C)$ arbitrarily close to $g$ that also satisfy additional conditions from the aforementioned countable list. We use this process as a recursive step. Thus, given any $f_{0} \in \mathcal{F}$ and any $\varepsilon \in \mathbb{R}_{> 0}$, we construct a Cauchy sequence $\left( f_{n} \right)_{n \geq 0}$ of elements of $\mathcal{F}$ whose limit $f \in \mathcal{F}$ satisfies the conditions of Theorem~\ref{theorem:entire} and also $d_{\mathcal{F}}\left( f_{0}, f \right) < \varepsilon$. This will complete our proof of Theorem~\ref{theorem:entire}.

\subsection{Polynomial interpolation}

Let us exhibit here families of complex polynomial maps that satisfy certain conditions on their values and derivatives. We will use them to adjust the dynamics of an entire map by perturbation.

\begin{lemma}
\label{lemma:entireVal}
For every finite subset $A$ of $\mathbb{C}$, every $b \in \mathbb{C} \setminus A$ and every $\zeta \in \mathbb{C}$, there exists a complex polynomial map $P_{A, b, \zeta} \colon \mathbb{C} \rightarrow \mathbb{C}$ of degree at most $2 \lvert A \rvert$ such that \[ P_{A, b, \zeta} \vert_{A} = 0 \, \text{,} \quad P_{A, b, \zeta}^{\prime} \vert_{A} = 0 \quad \text{and} \quad P_{A, b, \zeta}(b) = \zeta \, \text{.} \] Furthermore, \[ \lim_{\zeta \rightarrow 0} \sup\left\lbrace \left\lVert P_{A, b, \zeta} \right\rVert : \lvert A \rvert = N, \, \dist(b, A) \geq r, \, A \subset D(0, R) \right\rbrace = 0 \] for all $N \in \mathbb{Z}_{\geq 0}$, all $r, R \in \mathbb{R}_{> 0}$ and all seminorms $\lVert . \rVert$ on $\mathcal{F}$.
\end{lemma}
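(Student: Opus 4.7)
The plan is to define the polynomial explicitly by double-root interpolation. Set
\[ Q_{A}(z) = \prod_{a \in A} (z - a)^{2} \, \text{,} \]
which vanishes to order at least $2$ at each point of $A$, so that $Q_{A}\vert_{A} = 0$ and $Q_{A}^{\prime}\vert_{A} = 0$. Since $b \notin A$, one has $Q_{A}(b) \neq 0$, and we define
\[ P_{A, b, \zeta}(z) = \frac{\zeta}{Q_{A}(b)} Q_{A}(z) \, \text{.} \]
By construction, $P_{A, b, \zeta}$ is a complex polynomial of degree $2 \lvert A \rvert$ satisfying the three required interpolation conditions. This handles the first part of the statement.

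For the limit, I would exploit the fact that $P_{A, b, \zeta}$ depends linearly on $\zeta$. Hence for any seminorm $\lVert . \rVert$ on $\mathcal{F}$,
\[ \left\lVert P_{A, b, \zeta} \right\rVert = \frac{\lvert \zeta \rvert}{\lvert Q_{A}(b) \rvert} \left\lVert Q_{A} \right\rVert \, \text{,} \]
and it suffices to bound $\lVert Q_{A} \rVert / \lvert Q_{A}(b) \rvert$ uniformly over the family $\lbrace A : \lvert A \rvert = N, \, \dist(b, A) \geq r, \, A \subset D(0, R) \rbrace$. The denominator is easy: under these constraints,
\[ \lvert Q_{A}(b) \rvert = \prod_{a \in A} \lvert b - a \rvert^{2} \geq r^{2N} \, \text{.} \]

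The numerator requires one small observation. Each $Q_{A}$ lies in the finite-dimensional vector subspace $\mathcal{P}_{2N}$ of $\mathcal{F}$ consisting of complex polynomials of degree at most $2N$. The coefficients of $Q_{A}$ are, up to sign, elementary symmetric functions in the doubled roots of $Q_{A}$, all of which lie in $\overline{D(0, R)}$; hence these coefficients are bounded by a constant depending only on $N$ and $R$. Since all norms and seminorms on the finite-dimensional space $\mathcal{P}_{2N}$ are equivalent, $\lVert Q_{A} \rVert$ is bounded above by a constant $C(N, R, \lVert . \rVert)$ independent of $A$. Combining these two estimates yields
\[ \sup\left\lbrace \left\lVert P_{A, b, \zeta} \right\rVert : \lvert A \rvert = N, \, \dist(b, A) \geq r, \, A \subset D(0, R) \right\rbrace \leq \frac{\lvert \zeta \rvert \, C(N, R, \lVert . \rVert)}{r^{2N}} \, \text{,} \]
which tends to $0$ as $\zeta \rightarrow 0$.

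There is no real obstacle here; the only point requiring care is the passage from a bound on coefficients to a bound on an arbitrary seminorm of $\mathcal{F}$, which is handled by the elementary remark that $\mathcal{P}_{2N}$ is finite-dimensional and so inherits a unique Hausdorff topological vector space structure from $\mathcal{F}$.
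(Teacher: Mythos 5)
Your construction is exactly the one used in the paper: $P_{A,b,\zeta}(z) = \zeta \prod_{a\in A}\left(\frac{z-a}{b-a}\right)^2$, and your verification of the limit is the obvious (and correct) way to fill in the detail that the paper omits. One small imprecision: seminorms on a finite-dimensional space are not all \emph{equivalent} (a seminorm can be degenerate), but each one is \emph{dominated} by a norm, which is all your argument actually uses.
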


\begin{proof}
Given a finite set $A \subset \mathbb{C}$, $b \in \mathbb{C} \setminus A$ and $\zeta \in \mathbb{C}$, define $P_{A, b, \zeta} \colon \mathbb{C} \rightarrow \mathbb{C}$ by \[ P_{A, b, \zeta}(z) = \zeta \prod_{a \in A} \left( \frac{z -a}{b -a} \right)^{2} \, \text{.} \] Then the required conditions are satisfied.
\end{proof}

\begin{lemma}
\label{lemma:entireDiff}
For every finite subset $A$ of $\mathbb{C}$, every $b \in \mathbb{C} \setminus A$ and every $\lambda \in \mathbb{C}$, there exists a complex polynomial map $Q_{A, b, \lambda} \colon \mathbb{C} \rightarrow \mathbb{C}$ of degree at most $2 \lvert A \rvert +1$ such that \[ Q_{A, b, \lambda} \vert_{A} = 0 \, \text{,} \quad Q_{A, b, \lambda}^{\prime} \vert_{A} = 0 \, \text{,} \quad Q_{A, b, \lambda}(b) = 0 \quad \text{and} \quad Q_{A, b, \lambda}^{\prime}(b) = \lambda \, \text{.} \] Furthermore, $\lim\limits_{\lambda \rightarrow 0} \left\lVert Q_{A, b, \lambda} \right\rVert = 0$ for all finite subsets $A$ of $\mathbb{C}$, all $b \in \mathbb{C} \setminus A$ and all seminorms $\lVert . \rVert$ on $\mathcal{F}$.
\end{lemma}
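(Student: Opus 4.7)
The plan is to mimic the explicit construction used in the proof of Lemma~\ref{lemma:entireVal}, simply multiplying by an additional linear factor that vanishes at $b$ and whose derivative at $b$ equals $1$. Concretely, I would define
\[ Q_{A, b, \lambda}(z) = \lambda (z -b) \prod_{a \in A} \left( \frac{z -a}{b -a} \right)^{2} \, \text{.} \]
The degree is at most $1 + 2 \lvert A \rvert$, as required.

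To verify the four conditions, I would argue as follows. Each factor $(z-a)^{2}$ has a zero of order $2$ at $a$, so both $Q_{A, b, \lambda}$ and $Q_{A, b, \lambda}^{\prime}$ vanish on $A$. At $z = b$, the factor $(z-b)$ forces $Q_{A, b, \lambda}(b) = 0$. For the derivative at $b$, applying the Leibniz rule and using that the product $\prod_{a \in A} \left( \frac{z -a}{b -a} \right)^{2}$ equals $1$ at $z = b$, while the term in which the product is differentiated is killed by the factor $(z-b)$ evaluated at $b$, I get $Q_{A, b, \lambda}^{\prime}(b) = \lambda$.

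For the seminorm statement, since $Q_{A, b, \lambda}$ depends linearly on $\lambda$ (with $A$ and $b$ fixed), we have $\lVert Q_{A, b, \lambda} \rVert = \lvert \lambda \rvert \cdot \lVert Q_{A, b, 1} \rVert$, which tends to $0$ as $\lambda \to 0$ for any seminorm $\lVert . \rVert$ on $\mathcal{F}$.

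There is no real obstacle here; the construction is completely explicit and the verification consists of elementary computations at the points of $A \cup \lbrace b \rbrace$. The only subtlety compared to Lemma~\ref{lemma:entireVal} is the presence of the additional linear factor $(z-b)$, but this is exactly what is needed in order to make $Q_{A, b, \lambda}$ vanish at $b$ while having prescribed derivative $\lambda$ there.
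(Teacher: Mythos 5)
Your construction is exactly the one given in the paper, and the verification of the four interpolation conditions and the seminorm limit is correct. This matches the paper's proof essentially verbatim.
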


\begin{proof}
Given a finite set $A \subset \mathbb{C}$, $b \in \mathbb{C} \setminus A$ and $\lambda \in \mathbb{C}$, define $Q_{A, b, \lambda} \colon \mathbb{C} \rightarrow \mathbb{C}$ by \[ Q_{A, b, \lambda}(z) = \lambda (z -b) \prod_{a \in A} \left( \frac{z -a}{b -a} \right)^{2} \, \text{.} \] Then the required conditions are satisfied.
\end{proof}

\subsection{Adjustment of images}

Let us use here Lemma~\ref{lemma:entireVal} to adjust the image of a point under an entire map by perturbation.

\begin{lemma}
\label{lemma:entireImage}
Suppose that $f \in \mathcal{F}$, $A$ is a finite subset of $f^{-1}(E)$ and $b \in \mathbb{C}$. Then, for every $\varepsilon \in \mathbb{R}_{> 0}$, there exists $g \in \mathcal{F}(f, A)$ such that $d_{\mathcal{F}}(f, g) < \varepsilon$ and $g(b) \in E$.
\end{lemma}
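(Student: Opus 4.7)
The plan is to reduce to the case $b \notin A$ and then use Lemma~\ref{lemma:entireVal} to produce a small polynomial perturbation that moves $f(b)$ into $E$ while preserving values and derivatives on $A$.

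First, I would dispense with the trivial case $b \in A$. Since $A \subseteq f^{-1}(E)$ we have $f(b) \in E$, so $g = f$ works. So assume $b \in \mathbb{C} \setminus A$, which is exactly the hypothesis under which Lemma~\ref{lemma:entireVal} produces the interpolating polynomial $P_{A,b,\zeta}$.

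The idea is then to set $g = f + P_{A, b, \zeta}$ for a suitable $\zeta \in \mathbb{C}$. By the properties of $P_{A,b,\zeta}$, we automatically get $g|_A = f|_A$ and $g'|_A = f'|_A$, so $g \in \mathcal{F}(f, A)$. Moreover $g(b) = f(b) + \zeta$, so we need $\zeta$ such that $f(b) + \zeta \in E$. Since $E$ is dense in $\mathbb{C}$, for every $\delta > 0$ we can pick $\zeta$ with $\lvert \zeta \rvert < \delta$ achieving this. It remains to ensure $d_{\mathcal{F}}(f, g) < \varepsilon$.

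For the latter, I would translate the desired inequality on $d_{\mathcal{F}}$ into an inequality on finitely many seminorms $\lVert . \rVert_j$: choose $J$ so large that $\sum_{j > J} 2^{-j} < \varepsilon/2$, and then require $\lVert P_{A,b,\zeta} \rVert_j < \varepsilon/2$ for each $j = 0, \dots, J$. The final clause of Lemma~\ref{lemma:entireVal} (specialized to the fixed set $A$ and point $b$) gives $\lVert P_{A,b,\zeta} \rVert_j \to 0$ as $\zeta \to 0$ for each such $j$. So shrinking $\delta > 0$ further, we can pick $\zeta$ with $\lvert \zeta \rvert < \delta$ satisfying both $f(b) + \zeta \in E$ and the required seminorm bounds; the resulting $g$ satisfies all the required conditions. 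The proof is essentially routine once the polynomial of Lemma~\ref{lemma:entireVal} is in hand; the only mildly delicate step is the passage from seminorm smallness to $d_{\mathcal{F}}$-smallness, which is handled by truncating the defining series of $d_{\mathcal{F}}$ as above.
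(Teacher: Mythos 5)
Your proof is correct and follows essentially the same route as the paper: both dispatch the trivial case $b \in A$, then perturb $f$ by the interpolating polynomial of Lemma~\ref{lemma:entireVal} (you parametrize by the increment $\zeta$ so that $g(b) = f(b)+\zeta$, the paper by the target value so that $g_\zeta(b) = \zeta$, which is a cosmetic difference) and invoke density of $E$ together with the seminorm limit from that lemma. Your explicit truncation of the series defining $d_{\mathcal{F}}$ is a standard detail the paper leaves implicit, but it is entirely correct.
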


\begin{proof}
If $b \in A$, setting $g = f$, the required conditions are satisfied. Now, suppose that $b \in \mathbb{C} \setminus A$. For $\zeta \in \mathbb{C}$, define \[ g_{\zeta} = f +P_{A, b, \zeta -f(b)} \in \mathcal{F}(f, A) \, \text{.} \] Then $\lim\limits_{\zeta \rightarrow f(b)} g_{\zeta} = f$ in $\mathcal{F}$. Therefore, since $E$ is dense in $\mathbb{C}$, there exists $\zeta \in E$ such that $d_{\mathcal{F}}\left( f, g_{\zeta} \right) < \varepsilon$. Setting $g = g_{\zeta}$, the required conditions are satisfied. Thus, the lemma is proved.
\end{proof}

\subsection{Adjustment of preimages}

Let us use Lemma~\ref{lemma:entireVal} to adjust the preimages in a disk of finitely many points under an entire map.

\begin{lemma}
\label{lemma:entirePreim}
Suppose that $f \in \mathcal{F} \setminus \mathcal{E}$, $A$ is a finite subset of $E$, $B$ is a finite subset of $\mathbb{C}$ and $R \in \mathbb{R}_{> 0}$. Then, for every $\varepsilon \in \mathbb{R}_{> 0}$, there exists $g \in \mathcal{F}(f, A)$ such that $d_{\mathcal{F}}(f, g) < \varepsilon$ and $g^{-1}(B) \cap D(0, R) \subset E$.
\end{lemma}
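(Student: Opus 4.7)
The plan is to construct a polynomial perturbation $P$ of $f$ so that $g = f + P$ lies in $\mathcal{F}(f, A)$, satisfies $d_{\mathcal{F}}(f, g) < \varepsilon$, and has all its preimages of $B$ in $D(0, R)$ lying in $E$. Let $X = f^{-1}(B) \cap \overline{D(0, R)}$; this set is finite since $f$ is non-constant (being non-affine) and $B$ is finite. For each $x \in X$, write $b_{x} = f(x) \in B$ and let $m_{x}$ denote the multiplicity of $x$ as a zero of $f - b_{x}$. I would pick $R' > R$ such that $\partial D(0, R')$ contains no preimage of $B$, together with small pairwise disjoint open disks $D_{x} \subset D(0, R')$ around the points of $X$, disjoint from $A \setminus X$.

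For each $x \in X \setminus A$ I would choose $y_{x} \in E \cap D_{x}$, pairwise distinct and disjoint from $A$, to play the role of the new preimage. The perturbation $P$ would then be produced by Hermite interpolation to satisfy: (i) $P^{(j)}(a) = 0$ for all $a \in A$ and $j \in \{0, 1\}$, ensuring $g \in \mathcal{F}(f, A)$; (ii) for each $a \in X \cap A$, additionally $P^{(j)}(a) = 0$ for $2 \leq j \leq m_{a} - 1$; and (iii) for each $x \in X \setminus A$, $P(y_{x}) = b_{x} - f(y_{x})$ together with $P^{(j)}(y_{x}) = -f^{(j)}(y_{x})$ for $1 \leq j \leq m_{x} - 1$. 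A standard construction in the spirit of Lemma~\ref{lemma:entireVal} produces such a polynomial, and since $f(y_{x}) \to f(x) = b_{x}$ and $f^{(j)}(y_{x}) \to f^{(j)}(x) = 0$ for $1 \leq j \leq m_{x} - 1$ as $y_{x} \to x$, all the data in (ii) and (iii) tend to zero, whence $P \to 0$ in $\mathcal{F}$.

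The conclusion is then obtained from Rouch\'{e}'s theorem. Taking the $y_{x}$ close enough to $x$, $P$ becomes small enough uniformly on $\overline{D(0, R')}$ that, for each $b \in B$, one has $|P| < |f - b|$ on $\partial D(0, R')$ and on every $\partial D_{x}$, while $g - b$ does not vanish on the compact set $\overline{D(0, R')} \setminus \bigcup_{x} D_{x}$. Hence the zero count of $g - b$ in $D(0, R')$ and in each $D_{x}$ matches that of $f - b$. The interpolation conditions ensure that each $y_{x}$ (respectively each $a \in X \cap A$) is a zero of $g - b_{x}$ (respectively $g - b_{a}$) of order at least $m_{x}$ (respectively $m_{a}$), which by the matching count exhausts the zeros of $g - b$ inside $D_{x}$. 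Therefore $g^{-1}(B) \cap D(0, R') = (X \cap A) \cup \left\{ y_{x} : x \in X \setminus A \right\} \subset E$, and restricting to $D(0, R) \subset D(0, R')$ gives the lemma.

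The main technical point is the multiplicity bookkeeping for $x \in X \cap A$: since $\mathcal{F}(f, A)$ only forces $g$ and $f$ to agree to first order at each $a \in A$, if some $a \in X \cap A$ has $m_{a} \geq 3$, a careless perturbation can split the zero of $f - b_{a}$ at $a$ into several nearby zeros, some escaping $E$. This is precisely why condition (ii) must be imposed on $P$ in addition to the baseline vanishing of order $2$ at the points of $A$ required for membership in $\mathcal{F}(f, A)$.
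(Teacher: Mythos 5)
Your one-shot Hermite interpolation argument is essentially correct and does prove the lemma, but it is a genuinely different route from the paper's. The paper proceeds iteratively: at each step it picks a single bad preimage $w \in D(0, R) \setminus E$ of $B$ and moves it onto a nearby $\zeta \in E$ via $h_{\zeta} = g + P_{A \cup A', \zeta, g(w) - g(\zeta)}$, where $A' = g^{-1}(B) \cap D(0, R) \cap E$ is the set of already-placed good preimages and $P$ is the simple interpolating polynomial of Lemma~\ref{lemma:entireVal}, vanishing only to order $2$ at each node. The key observation is that the number $m_{g}$ of distinct good preimages strictly increases at each step and is bounded above by a fixed integer $N$ (the total multiplicity count on a slightly larger disk, stabilized by using that the $\mathcal{F}$-topology is finer than local uniform convergence), so the iteration terminates with $n_{g} = N_{g}$. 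This neatly sidesteps the multiplicity bookkeeping you rightly identify as the delicate point of your approach: since the paper only tracks the count of distinct good preimages, it is irrelevant that the order-$2$ vanishing of $P$ at a high-multiplicity point of $A'$ may split that zero into several pieces --- any escaping pieces are simply dealt with in later iterations. Your approach trades the iteration for higher-order Hermite conditions and a Rouch\'{e} verification; both are valid, and the paper's is arguably cleaner because it needs nothing beyond Lemma~\ref{lemma:entireVal} as stated.

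One small slip in your write-up: you set $X = f^{-1}(B) \cap \overline{D(0, R)}$ but only place disks around points of $X$, and then assert that $g - b$ is zero-free on $\overline{D(0, R')} \setminus \bigcup_{x} D_{x}$ and that $g^{-1}(B) \cap D(0, R') = (X \cap A) \cup \{ y_{x} : x \in X \setminus A \}$. This is false if $f^{-1}(B)$ meets the annulus $D(0, R') \setminus \overline{D(0, R)}$, since $f - b$ then vanishes there and so does $g - b$ for $P$ small. The fix is trivial --- either enlarge $X$ to $f^{-1}(B) \cap D(0, R')$, or drop $R'$ entirely and run the non-vanishing and Rouch\'{e} arguments on $\overline{D(0, R)} \setminus \bigcup_{x} D_{x}$ and on the individual $\partial D_{x}$, which is all that the conclusion $g^{-1}(B) \cap D(0, R) \subset E$ requires.
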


\begin{proof}
As $f$ is not constant, there exists $S > R$ such that $f^{-1}(B) \cap \partial D(0, S) = \varnothing$. Denote by $N \in \mathbb{Z}_{\geq 0}$ the number of preimages in $D(0, S)$ of the elements of $B$ under $f$, counting multiplicities. As the topology of $\mathcal{F}$ is finer than the topology of local uniform convergence, reducing $\varepsilon$ if necessary, we may assume that \[ \forall g \in \mathcal{F}, \, d_{\mathcal{F}}(f, g) < \varepsilon \Longrightarrow \sup_{\partial D(0, S)} \lvert f -g \rvert < \dist\left( f\left( \partial D(0, S) \right), B \right) \, \text{,} \] so that the elements of $B$ have together exactly $N$ preimages in $D(0, S)$ under any $g \in \mathcal{F}$ such that $d_{\mathcal{F}}(f, g) < \varepsilon$, counting multiplicities. Given $g \in \mathcal{F}$, denote by
\begin{itemize}
\item $m_{g}$ the number of preimages in $D(0, R) \cap E$ of the elements of $B$ under $g$, not counting multiplicities,
\item $n_{g}$ the number of preimages in $D(0, R) \cap E$ of the elements of $B$ under $g$, counting multiplicities,
\item $N_{g}$ the total number of preimages in $D(0, R)$ of the elements of $B$ under $g$, counting multiplicities.
\end{itemize}
We shall prove that there exists $g \in \mathcal{F}(f, A)$ such that $d_{\mathcal{F}}(f, g) < \varepsilon$ and $n_{g} = N_{g}$. Now, note that $m_{g} \leq N$ for all $g \in \mathcal{F}$ such that $d_{\mathcal{F}}(f, g) < \varepsilon$. Therefore, it suffices to prove that, if $g \in \mathcal{F}(f, A)$ satisfies $d_{\mathcal{F}}(f, g) < \varepsilon$ and $n_{g} < N_{g}$, then there exists $h \in \mathcal{F}(f, A)$ such that $d_{\mathcal{F}}(f, h) < \varepsilon$ and $m_{h} > m_{g}$. Thus, suppose that $g \in \mathcal{F}(f, A)$ is such a map. Define \[ A^{\prime} = g^{-1}(B) \cap D(0, R) \cap E \, \text{.} \] As $n_{g} < N_{g}$, there exists $w \in D(0, R) \setminus E$ such that $g(w) \in B$. For $\zeta \in \mathbb{C} \setminus \left( A \cup A^{\prime} \right)$, define \[ h_{\zeta} = g +P_{A \cup A^{\prime}, \zeta, g(w) -g(\zeta)} \in \mathcal{F}\left( g, A \cup A^{\prime} \right) \, \text{.} \] Then $\lim\limits_{\zeta \rightarrow w} h_{\zeta} = g$ in $\mathcal{F}$ because $w \in \mathbb{C} \setminus \left( A \cup A^{\prime} \right)$. Therefore, since $E$ is dense in $\mathbb{C}$ and $d_{\mathcal{F}}(f, g) < \varepsilon$, there exists $\zeta \in \left( D(0, R) \cap E \right) \setminus \left( A \cup A^{\prime} \right)$ such that $d_{\mathcal{F}}\left( f, h_{\zeta} \right) < \varepsilon$. Setting $h = h_{\zeta}$, we have $m_{h} > m_{g}$ since $h(\zeta) = g(w) \in B$ and $h \in \mathcal{F}\left( g, A^{\prime} \right)$. Thus, we have shown that there exists $g \in \mathcal{F}(f, A)$ such that $d_{\mathcal{F}}(f, g) < \varepsilon$ and $n_{g} = N_{g}$, which completes the proof of the lemma.
\end{proof}

\subsection{Adjustment of cycles}

Now, let us use Lemmas~\ref{lemma:entireVal} and~\ref{lemma:entireDiff} in order to adjust the positions and multipliers of cycles for an entire map.

Given $p \in \mathbb{Z}_{\geq 1}$ and $R \in \mathbb{R}_{> 0}$, we say that $f \in \mathcal{F}$ has the property $\left( \Gamma_{p, R} \right)$ if the cycles for $f$ with period at most $p$ that intersect $D(0, R)$ are all contained in $E$ and their multipliers all lie in $\Lambda$.

\begin{lemma}
\label{lemma:entireCycle}
Suppose that $f \in \mathcal{F} \setminus \mathcal{E}$, $A$ is a finite subset of $E \cap f^{-1}(E)$, $C$ is a finite union of cycles for $f$ that are contained in $E$ and whose multipliers lie in $\Lambda$, $p \in \mathbb{Z}_{\geq 1}$ and $R \in \mathbb{R}_{> 0}$. Then, for every $\varepsilon \in \mathbb{R}_{> 0}$, there exists $g \in \mathcal{F}(f, A, C)$ that has the property $\left( \Gamma_{p, R} \right)$ and satisfies $d_{\mathcal{F}}(f, g) < \varepsilon$.
\end{lemma}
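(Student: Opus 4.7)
The plan is to replace each bad cycle (a cycle of period at most $p$ meeting $D(0, R)$ that is not contained in $E$ or whose multiplier lies outside $\Lambda$) by a good one via a finite succession of small perturbations that freeze all previously obtained good cycles.

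\emph{Setup.} Since $f \notin \mathcal{E}$, no iterate $f^q$ with $1 \leq q \leq p$ coincides with the identity, so each $f^q - \id$ has a discrete zero set. Choose $S > R$ such that $C \subset D(0, S)$, every cycle of $f$ of period $\leq p$ meeting $D(0, R)$ lies in $D(0, S)$, and $f$ has no periodic point of period $\leq p$ on $\partial D(0, S)$. Let $N$ be the total number (with multiplicities) of zeros of $f^q - \id$ in $D(0, S)$ summed over $1 \leq q \leq p$. By the argument principle combined with the hypothesis that the topology of $\mathcal{F}$ refines local uniform convergence, there is $\eta > 0$ so that every $h \in \mathcal{F}$ with $d_{\mathcal{F}}(f, h) < \eta$ has exactly the same counts and no periodic point of period $\leq p$ on $\partial D(0, S)$; in particular, no new cycle meeting $D(0, R)$ can appear.

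\emph{Inductive step.} Starting from $f_0 := f$ and taking $C_0$ to be the union of $C$ with all good cycles of $f_0$ of period $\leq p$ contained in $D(0, S)$, we construct finite sequences $f_0, f_1, \ldots, f_M \in \mathcal{F}$ and $C_0 \subseteq C_1 \subseteq \cdots \subseteq C_M$, where each $C_j$ is a union of cycles of $f_j$ contained in $E$ with multipliers in $\Lambda$, and $f_{j+1} \in \mathcal{F}(f_j, A, C_j)$. Given a bad cycle $\mathcal{O} = \{z_0, \ldots, z_{q - 1}\}$ of $f_j$ with $f_j(z_i) = z_{i + 1 \bmod q}$ meeting $D(0, R)$, choose a target cycle $\mathcal{O}' = \{z'_0, \ldots, z'_{q - 1}\} \subset E$ as follows: when $z_i \in A$, set $z'_i := z_i$ (this is forced, since $f_{j+1}(z_i) = f_j(z_i) = z_{i+1}$ requires $z'_{i+1} = z_{i+1}$); otherwise, pick $z'_i \in E$ arbitrarily close to $z_i$, outside $A \cup C_j$ and pairwise distinct. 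Next select $\lambda' \in \Lambda$ close to the multiplier of $\mathcal{O}$ and complex numbers $\mu_0, \ldots, \mu_{q - 1}$ close to $f_j'(z_0), \ldots, f_j'(z_{q - 1})$ with $\prod_{i=0}^{q-1} \mu_i = \lambda'$, which is possible by the density of $\Lambda$ and by solving for one free $\mu_i$.

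\emph{Perturbation.} With $\tilde{A}_i := (A \cup C_j \cup \mathcal{O}') \setminus \{z'_i\}$, define
\[ f_{j+1} := f_j + \sum_{i = 0}^{q - 1} \bigl( P_{\tilde{A}_i, z'_i, \alpha_i} + Q_{\tilde{A}_i, z'_i, \beta_i} \bigr), \]
where $\alpha_i, \beta_i \in \mathbb{C}$ are uniquely determined by the conditions $f_{j+1}(z'_i) = z'_{i+1}$ and $f_{j+1}'(z'_i) = \mu_i$. Lemmas~\ref{lemma:entireVal} and~\ref{lemma:entireDiff} guarantee that all seminorms of this correction tend to $0$ as $z'_i \to z_i$ and $\mu_i \to f_j'(z_i)$, so shrinking the deviations yields $d_{\mathcal{F}}(f_j, f_{j+1}) < \min\{\eta, \varepsilon\}/2^{j+1}$. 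The double vanishing of $P_{\tilde{A}_i, \cdot}$ and $Q_{\tilde{A}_i, \cdot}$ on $\tilde{A}_i$ gives $f_{j+1} \in \mathcal{F}(f_j, A, C_j) \subseteq \mathcal{F}(f, A, C)$, and $\mathcal{O}'$ is a cycle of $f_{j+1}$ of period exactly $q$ contained in $E$ with multiplier $\lambda' \in \Lambda$. Setting $C_{j+1}$ to be $C_j \cup \mathcal{O}'$ together with any further cycles of $f_{j+1}$ of period $\leq p$ in $D(0, S)$ that happen to be good, the counting from the setup ensures $|C_{j+1}| > |C_j|$ (as sets of periodic points). Since $|C_j| \leq N$, the process terminates after at most $N$ steps with $g := f_M$ satisfying $(\Gamma_{p, R})$ and $d_{\mathcal{F}}(f, g) < \varepsilon$.

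The principal technical obstacle is the coupled interpolation: the derivative $P'_{\tilde{A}_i, z'_i, \alpha_i}(z'_i)$ is generically nonzero, so $\beta_i$ must absorb both this contribution and the shift from $f_j'(z'_i)$ to $\mu_i$; nevertheless, all $\alpha_i, \beta_i$ remain small as long as $\mathcal{O}'$ and $\mu_i$ are chosen close enough to $\mathcal{O}$ and $f_j'(z_i)$, which the density of $E$ and $\Lambda$ guarantees. Combined with the persistence of periodic point counts in $D(0, S)$, this makes each step well-defined and the cumulative perturbation controllable.
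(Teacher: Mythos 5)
Your proof is correct and follows the paper's overall strategy closely: iteratively replace each bad cycle meeting $D(0,R)$ by a nearby good cycle via the $P$- and $Q$-interpolation polynomials of Lemmas~\ref{lemma:entireVal} and~\ref{lemma:entireDiff}, while freezing $A$, $C$, and previously-obtained good cycles, and terminate using the argument-principle count of periodic points in $D(0,S)$. The main divergence is that the paper splits each replacement into two sub-steps: it first moves $\Omega$ into $E$ with a sum of $P$-terms alone (with $\boldsymbol{\zeta}$ fixed to be the identity on all of $\Omega\cap E$ and the $P$-terms added only at $\omega\in\Omega\setminus\bigl(E\cap g^{-1}(E)\bigr)$, which automatically subsumes your forced constraints $z'_i=z_i$ and $z'_{i+1}=z_{i+1}$ since $A\subseteq E\cap g^{-1}(E)$), and only then adjusts the multiplier with $Q$-terms whose interpolation sets already contain the moved cycle $\Omega_{\boldsymbol{\zeta}}$, so that the derivative at each $\boldsymbol{\zeta}(\omega)$ is shifted purely by $\boldsymbol{\lambda}(\omega)$ and no coupling appears. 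You instead superimpose $P$ and $Q$ in a single step, which obliges you to solve for $\beta_i$ in terms of $P'_{\tilde{A}_i,z'_i,\alpha_i}(z'_i)$ and to factor the target multiplier as $\prod_i\mu_i$; you correctly observe that the system for $(\alpha_i,\beta_i)$ is triangular and that all quantities tend to zero with the deviations, so this is sound. Both routes reach the same conclusion; the paper's two-stage version is slightly cleaner precisely because it never entangles position and derivative data in a single perturbation, while your one-step version is more direct at the cost of the coupling bookkeeping you flag.
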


\begin{proof}
As $f \colon \mathbb{C} \rightarrow \mathbb{C}$ is not injective, we have $f^{\circ j} \neq \id_{\mathbb{C}}$ for all $j \in \mathbb{Z}_{\geq 1}$, and hence there exists $S > R$ such that $\partial D(0, S)$ contains no periodic point for $f$ with period at most $p$. Denote by $N \in \mathbb{Z}_{\geq 0}$ the number of periodic points for $f$ in $D(0, S)$ with period at most $p$, counting multiplicities. Since the topology of $\mathcal{F}$ is finer than the topology of local uniform convergence, reducing the number $\varepsilon$ if necessary, we may assume that any $g \in \mathcal{F}$ that satisfies $d_{\mathcal{F}}(f, g) < \varepsilon$ has exactly $N$ periodic points in $D(0, S)$ with period at most $p$, counting multiplicities. Given $g \in \mathcal{F}$, denote by
\begin{itemize}
\item $n_{g}$ the number of periodic points for $g$ in $D(0, R)$ with period at most $p$ whose cycle is contained in $E$ and whose multiplier lies in $\Lambda$, not counting multiplicities,
\item $N_{g}$ the number of periodic points for $g$ in $D(0, R)$ with period at most $p$, counting multiplicities.
\end{itemize}
Let us prove that there exists $g \in \mathcal{F}(f, A, C)$ such that $d_{\mathcal{F}}(f, g) < \varepsilon$ and $n_{g} = N_{g}$. Now, note that $n_{g} \leq N$ for all $g \in \mathcal{F}$ such that $d_{\mathcal{F}}(f, g) < \varepsilon$. Therefore, it suffices to show that, if $g \in \mathcal{F}(f, A, C)$ satisfies $d_{\mathcal{F}}(f, g) < \varepsilon$ and $n_{g} < N_{g}$, then there exists $h \in \mathcal{F}(f, A, C)$ such that $d_{\mathcal{F}}(f, h) < \varepsilon$ and $n_{h} > n_{g}$. Suppose that $g \in \mathcal{F}(f, A, C)$ is such a map. Define $C^{\prime}$ to be the union of the cycles for $g$ with period at most $p$ that intersect $D(0, R)$, are contained in $E$ and whose multipliers lie in $\Lambda$. Because $\Lambda \subseteq \mathbb{C} \setminus \lbrace 1 \rbrace$, the elements of $C^{\prime}$ are all simple periodic points for $g$. Therefore, as $n_{g} < N_{g}$, there is a cycle $\Omega$ for $g$ with period at most $p$ that intersects $D(0, R)$ but is not contained in $C^{\prime}$. Define \[ \boldsymbol{Z} = \left\lbrace \boldsymbol{\zeta} \in \mathbb{C}^{\Omega} : \boldsymbol{\zeta} \vert_{\Omega \cap E} = \id_{\Omega \cap E} \right\rbrace \, \text{.} \] For $\boldsymbol{\zeta} \in \boldsymbol{Z}$, set $\Omega_{\boldsymbol{\zeta}} = \left\lbrace \boldsymbol{\zeta}(\omega) : \omega \in \Omega \right\rbrace$. Since $A \subseteq E \cap g^{-1}(E)$, $\Omega \subset \mathbb{C} \setminus \left( C \cup C^{\prime} \right)$ and $\Omega \cap D(0, R) \neq \varnothing$, there is a neighborhood $V$ of $\id_{\Omega}$ in $\boldsymbol{Z}$ such that, for each $\boldsymbol{\zeta} \in V$,
\begin{itemize}
\item $\boldsymbol{\zeta}(\omega) \in \mathbb{C} \setminus A$ for all $\omega \in \Omega \setminus \left( E \cap g^{-1}(E) \right)$,
\item $\Omega_{\boldsymbol{\zeta}} \subset \mathbb{C} \setminus \left( C \cup C^{\prime} \right)$,
\item $\boldsymbol{\zeta}(\omega) \neq \boldsymbol{\zeta}\left( \omega^{\prime} \right)$ for all distinct $\omega, \omega^{\prime} \in \Omega$,
\item $\Omega_{\boldsymbol{\zeta}} \cap D(0, R) \neq \varnothing$.
\end{itemize}
For $\boldsymbol{\zeta} \in V$, define \[ g_{\boldsymbol{\zeta}} = g +\sum_{\omega \in \Omega \setminus \left( E \cap g^{-1}(E) \right)} \phi_{\boldsymbol{\zeta}, \omega} \in \mathcal{F}\left( g, A \cup C \cup C^{\prime} \right) \, \text{,} \] where, for every $\omega \in \Omega \setminus \left( E \cap g^{-1}(E) \right)$, \[ \phi_{\boldsymbol{\zeta}, \omega} = P_{A \cup C \cup C^{\prime} \cup \left( \Omega_{\boldsymbol{\zeta}} \setminus \left\lbrace \boldsymbol{\zeta}(\omega) \right\rbrace \right), \boldsymbol{\zeta}(\omega), \boldsymbol{\zeta}\left( g(\omega) \right) -g\left( \boldsymbol{\zeta}(\omega) \right)} \, \text{.} \] For each $\boldsymbol{\zeta} \in V$, we have $g_{\boldsymbol{\zeta}}\left( \boldsymbol{\zeta}(\omega) \right) = \boldsymbol{\zeta}\left( g(\omega) \right)$ for all $\omega \in \Omega$, and hence $\Omega_{\boldsymbol{\zeta}}$ forms a cycle for $g_{\boldsymbol{\zeta}}$. Now, note that $\lim\limits_{\boldsymbol{\zeta} \rightarrow \id_{\Omega}} g_{\boldsymbol{\zeta}} = g$ in $\mathcal{F}$. Therefore, as $E$ is dense in $\mathbb{C}$ and $d_{\mathcal{F}}(f, g) < \varepsilon$, there exists $\boldsymbol{\zeta} \in V \cap E^{\Omega}$ such that $d_{\mathcal{F}}\left( f, g_{\boldsymbol{\zeta}} \right) < \varepsilon$. For $\boldsymbol{\lambda} \in \mathbb{C}^{\Omega}$, define \[ h_{\boldsymbol{\lambda}} = g_{\boldsymbol{\zeta}} +\sum_{\omega \in \Omega} \psi_{\boldsymbol{\lambda}, \omega} \in \mathcal{F}\left( g_{\boldsymbol{\zeta}}, A \cup \Omega_{\boldsymbol{\zeta}}, C \cup C^{\prime} \right) \, \text{,} \] where, for every $\omega \in \Omega$, \[ \psi_{\boldsymbol{\lambda}, \omega} = Q_{C \cup C^{\prime} \cup \left( \left( A \cup \Omega_{\boldsymbol{\zeta}} \right) \setminus \left\lbrace \boldsymbol{\zeta}(\omega) \right\rbrace \right), \boldsymbol{\zeta}(\omega), \boldsymbol{\lambda}(\omega)} \, \text{.} \] Then $\Omega_{\boldsymbol{\zeta}}$ is a cycle for $h_{\boldsymbol{\lambda}}$ and we have $h_{\boldsymbol{\lambda}}^{\prime}\left( \boldsymbol{\zeta}(\omega) \right) = g_{\boldsymbol{\zeta}}^{\prime}\left( \boldsymbol{\zeta}(\omega) \right) +\boldsymbol{\lambda}(\omega)$ for all $\omega \in \Omega$ and all $\boldsymbol{\lambda} \in \mathbb{C}^{\Omega}$. Moreover, $\lim\limits_{\boldsymbol{\lambda} \rightarrow 0} h_{\boldsymbol{\lambda}} = g_{\boldsymbol{\zeta}}$ in $\mathcal{F}$. Therefore, since $\Lambda$ is dense in $\mathbb{C}$ and $d_{\mathcal{F}}\left( f, g_{\boldsymbol{\zeta}} \right) < \varepsilon$, there exists $\boldsymbol{\lambda} \in \mathbb{C}^{\Omega}$ such that $d_{\mathcal{F}}\left( f, h_{\boldsymbol{\lambda}} \right) < \varepsilon$ and the multiplier of $h_{\boldsymbol{\lambda}}$ at $\Omega_{\boldsymbol{\zeta}}$ lies in $\Lambda$. Setting $h = h_{\boldsymbol{\lambda}}$, we have $n_{h} > n_{g}$. Thus, we have proved that there exists $g \in \mathcal{F}(f, A, C)$ such that $d_{\mathcal{F}}(f, g) < \varepsilon$ and $n_{g} = N_{g}$, which completes the proof of the lemma.
\end{proof}

\subsection{Proof of the theorem}

Finally, let us combine here Lemmas~\ref{lemma:entireImage}, \ref{lemma:entirePreim} and~\ref{lemma:entireCycle} in order to prove Theorem~\ref{theorem:entire}.

\begin{proof}[Proof of Theorem~\ref{theorem:entire}]
Suppose that $f_{0} \in \mathcal{F}$ and $\varepsilon \in \mathbb{R}_{> 0}$. We shall show that there exists $f \in \mathcal{F}$ such that
\begin{itemize}
\item $d_{\mathcal{F}}\left( f_{0}, f \right) < \varepsilon$,
\item $f^{-1}(E) = E$,
\item the cycles for $f$ are all contained in $E$ and their multipliers all lie in $\Lambda$.
\end{itemize}
Since $\mathcal{E}$ is a closed subset of $\mathcal{F}$ with empty interior, replacing $f_{0}$ and $\varepsilon$ if necessary, we may assume that $f_{0} \in \mathcal{F} \setminus \mathcal{E}$ and $\varepsilon \in \left( 0, \dist\left( f_{0}, \mathcal{E} \right) \right)$. Write $E = \left\lbrace e_{j} : j \in \mathbb{Z}_{\geq 1} \right\rbrace$, with $e_{j} \neq e_{k}$ for all distinct $j, k \in \mathbb{Z}_{\geq 1}$. For $n \in \mathbb{Z}_{\geq 1}$, define $E_{n} = \left\lbrace e_{1}, \dotsc, e_{n} \right\rbrace$. Let us show that there exists a sequence $\left( f_{n} \right)_{n \geq 0}$ of elements of $\mathcal{F}$ such that, for every $n \geq 1$,
\begin{enumerate}
\item\label{item:entireDist} $d_{\mathcal{F}}\left( f_{n -1}, f_{n} \right) < \frac{\varepsilon}{2^{n}}$,
\item\label{item:entireEqual} $f_{n} \in \mathcal{F}\left( f_{n -1}, A_{n -1}, C_{n -1} \right)$ (see the definitions below),
\item\label{item:entireImage} $f_{n}\left( e_{n} \right) \in E$,
\item\label{item:entirePreim} $f_{n}^{-1}\left( E_{n} \right) \cap D(0, n) \subset E$,
\item\label{item:entireCycle} $f_{n}$ has the property $\left( \Gamma_{n, n} \right)$,
\end{enumerate}
where, for every $n \geq 0$, \[ A_{n} = E_{n} \cup \left( f_{n}^{-1}\left( E_{n} \right) \cap D(0, n) \right) \] and $C_{n}$ denotes the union of the cycles for $f_{n}$ with period at most $n$ that intersect $D(0, n)$, with $A_{0} = C_{0} = \varnothing$ by convention. Let us proceed by recursion. Suppose that $n \in \mathbb{Z}_{\geq 0}$ and $f_{1}, \dotsc, f_{n} \in \mathcal{F}$ satisfy these conditions~\eqref{item:entireDist}--\eqref{item:entireCycle}, and let us prove the existence of $f_{n +1} \in \mathcal{F}$ that satisfies the same conditions. By the conditions~\eqref{item:entireEqual}, \eqref{item:entireImage} and~\eqref{item:entireCycle}, we have $A_{n} \cup C_{n} \subset f_{n}^{-1}(E)$. By Lemma~\ref{lemma:entireImage}, it follows that there exists $g_{n} \in \mathcal{F}$ such that \[ g_{n} \in \mathcal{F}\left( f_{n}, A_{n} \cup C_{n} \right) \, \text{,} \quad d_{\mathcal{F}}\left( f_{n}, g_{n} \right) < \frac{\varepsilon}{3 \cdot 2^{n +1}} \, \text{,} \quad g_{n}\left( e_{n +1} \right) \in E \, \text{.} \] Note that $g_{n} \in \mathcal{F} \setminus \mathcal{E}$ since $d_{\mathcal{F}}\left( f_{0}, g_{n} \right) < \varepsilon$ by the condition~\eqref{item:entireDist}. Now, define \[ A_{n}^{\prime} = A_{n} \cup \left\lbrace e_{n +1} \right\rbrace = E_{n +1} \cup \left( f_{n}^{-1}\left( E_{n} \right) \cap D(0, n) \right) \, \text{.} \] Then $A_{n}^{\prime} \subseteq E \cap g_{n}^{-1}(E)$ by the conditions~\eqref{item:entireEqual}, \eqref{item:entireImage} and~\eqref{item:entirePreim}. Moreover, $C_{n}$ is a union of cycles for $g_{n}$ that are all contained in $E$ and whose multipliers all lie in $\Lambda$ by the condition~\eqref{item:entireCycle}. Therefore, by Lemma~\ref{lemma:entireCycle}, there exists $h_{n} \in \mathcal{F}$ such that \[ h_{n} \in \mathcal{F}\left( g_{n}, A_{n}^{\prime}, C_{n} \right) \, \text{,} \quad d_{\mathcal{F}}\left( g_{n}, h_{n} \right) < \frac{\varepsilon}{3 \cdot 2^{n +1}} \, \text{,} \quad h_{n} \text{ has the property } \left( \Gamma_{n +1, n +2} \right) \, \text{.} \] Note that $h_{n} \in \mathcal{F} \setminus \mathcal{E}$ since $d_{\mathcal{F}}\left( f_{0}, h_{n} \right) < \varepsilon$ by the condition~\eqref{item:entireDist}. In particular, we have $h_{n}^{\circ j} \neq \id_{\mathbb{C}}$ for all $j \in \lbrace 1, \dotsc, n +1 \rbrace$, and hence there exists $R_{n} \in (n +1, n +2)$ such that $\partial D\left( 0, R_{n} \right)$ contains no periodic point for $h_{n}$ with period at most $n +1$. Now, denote by $X_{n}$ the union of the cycles for $h_{n}$ with period at most $n +1$ that intersect $D\left( 0, R_{n} \right)$. Then the cycles for $h_{n}$ in $X_{n}$ are all contained in $E$ and their multipliers all lie in $\Lambda$. Denote by $N_{n} \in \mathbb{Z}_{\geq 0}$ the number of periodic points for $h_{n}$ in $D\left( 0, R_{n} \right)$ with period at most $n +1$, counting multiplicities. Since $\Lambda \subseteq \mathbb{C} \setminus \lbrace 1 \rbrace$, the elements of $X_{n}$ are all simple periodic points for $h_{n}$, and hence $N_{n}$ equals the cardinality of $X_{n} \cap D\left( 0, R_{n} \right)$. Moreover, since the topology of $\mathcal{F}$ is finer than the topology of local uniform convergence, there exists $\varepsilon_{n} \in \left( 0, \frac{\varepsilon}{3 \cdot 2^{n +1}} \right)$ such that any $k_{n} \in \mathcal{F}$ such that $d_{\mathcal{F}}\left( h_{n}, k_{n} \right) < \varepsilon_{n}$ has exactly $N_{n}$ periodic points in $D\left( 0, R_{n} \right)$ with period at most $n +1$, counting multiplicities. Since $A_{n}^{\prime} \cup X_{n} \subset E$, it follows from Lemma~\ref{lemma:entirePreim} that there exists $f_{n +1} \in \mathcal{F}$ such that \[ f_{n +1} \in \mathcal{F}\left( h_{n}, A_{n}^{\prime} \cup X_{n} \right) \, \text{,} \quad d_{\mathcal{F}}\left( h_{n}, f_{n +1} \right) < \varepsilon_{n} \, \text{,} \quad f_{n +1}^{-1}\left( E_{n +1} \right) \cap D(0, n +1) \subset E \, \text{.} \] Then $f_{n +1}$ clearly satisfies the conditions~\eqref{item:entireDist}--\eqref{item:entirePreim}. Moreover, $X_{n}$ is also a union of cycles for $f_{n +1}$ and $f_{n +1}$ has exactly $N_{n}$ periodic points in $D\left( 0, R_{n} \right)$ with period at most $n +1$, counting multiplicities. Therefore, as $X_{n} \cap D\left( 0, R_{n} \right)$ has cardinality $N_{n}$, the cycles for $f_{n +1}$ with period at most $n +1$ that intersect $D\left( 0, R_{n} \right)$ are the cycles contained in $X_{n}$. Since these are all contained in $E$ and their multipliers all lie in $\Lambda$, the map $f_{n +1}$ also satisfies the condition~\eqref{item:entireCycle} because $R_{n} > n +1$. Thus, we have proved the existence of such a sequence $\left( f_{n} \right)_{n \geq 0}$ of elements of $\mathcal{F}$.

It follows from the condition~\eqref{item:entireDist} that $\left( f_{n} \right)_{n \geq 0}$ is a Cauchy sequence in $\left( \mathcal{F}, d_{\mathcal{F}} \right)$, and its limit $f \in \mathcal{F}$ satisfies $d_{\mathcal{F}}\left( f_{0}, f \right) < \varepsilon$. In particular, $f \in \mathcal{F} \setminus \mathcal{E}$. Let us prove that $f$ satisfies the required conditions. It suffices to show that, for each $m \in \mathbb{Z}_{\geq 1}$,
\begin{enumerate}[label=\textup{(\roman*)}, ref=\textup{\roman*}]
\item\label{item:entireImageBis} $f\left( e_{m} \right) \in E$,
\item\label{item:entirePreimBis} $f^{-1}\left( E_{m} \right) \cap D(0, m) \subset E$,
\item\label{item:entireCycleBis} $f$ has the property $\left( \Gamma_{m, m} \right)$.
\end{enumerate}
Suppose that $m \in \mathbb{Z}_{\geq 1}$. Then $f_{n}\left( e_{m} \right) = f_{m}\left( e_{m} \right)$ for all $n \geq m$ by the condition~\eqref{item:entireEqual}, and hence $f\left( e_{m} \right) = f_{m}\left( e_{m} \right)$ since $\left( f_{n} \right)_{n \geq 0}$ converges pointwise to $f$. Therefore, by the condition~\eqref{item:entireImage}, the condition~\eqref{item:entireImageBis} is satisfied. By the condition~\eqref{item:entireEqual} and as $\left( f_{n} \right)_{n \geq 0}$ converges pointwise to $f$, for every $n \geq m$, \[ f_{n}^{-1}\left( E_{m} \right) \cap D(0, m) \subseteq f_{n +1}^{-1}\left( E_{m} \right) \cap D(0, m) \subseteq f^{-1}\left( E_{m} \right) \cap D(0, m) \, \text{.} \] Therefore, as $f^{-1}\left( E_{m} \right) \cap D(0, m)$ is finite since $f \in \mathcal{F} \setminus \mathcal{E}$, there exists $N \in \mathbb{Z}_{\geq m}$ such that \[ \bigcup_{n \geq N} \left( f_{n}^{-1}\left( E_{m} \right) \cap D(0, m) \right) = f_{N}^{-1}\left( E_{m} \right) \cap D(0, m) \, \text{.} \] As $\left( f_{n} \right)_{n \geq 0}$ converges locally uniformly to $f$ and $f \in \mathcal{F} \setminus \mathcal{E}$, it follows that \[ f^{-1}\left( E_{m} \right) \cap D(0, m) \subseteq \overline{\bigcup_{n \geq N} \left( f_{n}^{-1}\left( E_{m} \right) \cap D(0, m) \right)} = f_{N}^{-1}\left( E_{m} \right) \cap D(0, m) \, \text{.} \] Therefore, by the condition~\eqref{item:entirePreim}, the condition~\eqref{item:entirePreimBis} is satisfied. For $n \in \mathbb{Z}_{\geq m}$, define $Y_{n} \subseteq C_{n}$ to be the union of the cycles for $f_{n}$ with period at most $m$ that intersect $D(0, m)$. Also define $Y$ to be the union of the cycles for $f$ with period at most $m$ that intersect $D(0, m)$. By the condition~\eqref{item:entireEqual} and since $\left( f_{n} \right)_{n \geq 0}$ converges pointwise to $f$, for each $n \geq m$, we have $Y_{n} \subseteq Y_{n +1} \subseteq Y$. Therefore, since $Y$ is finite because $f \in \mathcal{F} \setminus \mathcal{E}$, there exists $N \in \mathbb{Z}_{\geq m}$ such that $\bigcup\limits_{n \geq N} Y_{n} = Y_{N}$. Since $\left( f_{n} \right)_{n \geq 0}$ converges locally uniformly to $f$ and $f \in \mathcal{F} \setminus \mathcal{E}$, it follows that \[ Y \subseteq \overline{\bigcup_{n \geq N} Y_{n}} = Y_{N} \, \text{.} \] Moreover, $f \in \mathcal{F}\left( f_{N}, Y_{N} \right)$ by the condition~\eqref{item:entireEqual} and since $\left( f_{n} \right)_{n \geq 0}$ converges locally uniformly to $f$. Therefore, by the condition~\eqref{item:entireCycle}, the condition~\eqref{item:entireCycleBis} is also satisfied. This completes the proof of the theorem.
\end{proof}

\section{Proof of Theorem~\ref{theorem:realEven}}
\label{section:realEven}

We shall adapt here our proof of Theorem~\ref{theorem:entire} in order to prove Theorem~\ref{theorem:realEven}.

We fix a Fr\'{e}chet space $\mathcal{F}$ of real and even entire maps that contains all the real and even polynomial maps and whose topology is finer than the topology of local uniform convergence on $\mathbb{C}$. As in Section~\ref{section:entire}, taking a countable family $\left( \lVert . \rVert_{j} \right)_{j \geq 0}$ of seminorms associated to $\mathcal{F}$, we define the distance $d_{\mathcal{F}}$ on $\mathcal{F}$ by \[ d_{\mathcal{F}}(f, g) = \sum_{j = 0}^{+\infty} 2^{-j} \min\left\lbrace 1, \lVert f -g \rVert_{j} \right\rbrace \, \text{.} \]

We also fix a countable and dense subset $E$ of $\mathbb{C}$ that is symmetric with respect to both the real and imaginary axes and such that $E \cap (\mathbb{R} \cup i \mathbb{R})$ is dense in $\mathbb{R} \cup i \mathbb{R}$ and a dense subset $\Lambda$ of $\mathbb{C}$ that is symmetric with respect to the real axis and such that $\Lambda \cap \mathbb{R}$ is dense in $\mathbb{R}$. Furthermore, removing $1$ from $\Lambda$ if necessary, we assume that $\Lambda \subseteq \mathbb{C} \setminus \lbrace 1 \rbrace$.

We define \[ \mathcal{E} = \lbrace z \mapsto a : a \in \mathbb{R} \rbrace \] to be the set of real constant maps on $\mathbb{C}$, which equals the set of all real and even affine maps on $\mathbb{C}$. The set $\mathcal{E}$ forms a real vector space of dimension $1$, and hence a closed subset of $\mathcal{F}$ with empty interior.

As in Section~\ref{section:entire}, given $f \in \mathcal{F}$ and subsets $A, C$ of $\mathbb{C}$, we define \[ \mathcal{F}(f, A) = \left\lbrace g \in \mathcal{F} : g \vert_{A} = f \vert_{A} \text{ and } g^{\prime} \vert_{A} = f^{\prime} \vert_{A} \right\rbrace \] and \[ \mathcal{F}(f, A, C) = \left\lbrace g \in \mathcal{F} : g \vert_{A \cup C} = f \vert_{A \cup C} \text{ and } g^{\prime} \vert_{C} = f^{\prime} \vert_{C} \right\rbrace \, \text{.} \]

Given a subset $A$ of $\mathbb{C}$, we define \[ A^{\sym} = A \cup \sigma(A) \cup (-A) \cup \left( -\sigma(A) \right) \, \text{,} \] where $\sigma \colon \mathbb{C} \rightarrow \mathbb{C}$ is the complex conjugation, so that $A^{\sym}$ is the smallest subset of $\mathbb{C}$ that contains $A$ and is symmetric with respect to the real and imaginary axes.

Now, to prove Theorem~\ref{theorem:realEven}, let us follow and adapt the strategy used in Section~\ref{section:entire}. The differences with our proof of Theorem~\ref{theorem:entire} arise from the fact that any real and even entire map has $0$ as a critical point and maps $\mathbb{R} \cup i \mathbb{R}$ into $\mathbb{R}$.

\subsection{Polynomial interpolation}

First, let us adapt Lemmas~\ref{lemma:entireVal} and~\ref{lemma:entireDiff} in order to work with real and even polynomial maps.

\begin{lemma}
\hangindent\leftmargini
$\bullet$\hskip\labelsep For each finite set $A \subset \mathbb{C}$ such that $A^{\sym} = A$, each $b \in (\mathbb{R} \cup i \mathbb{R}) \setminus A$ and each $\zeta \in \mathbb{R}$, there exists a real and even polynomial map $P_{A, b, \zeta}^{\axes} \colon \mathbb{C} \rightarrow \mathbb{C}$ of degree at most $2 \lvert A \rvert$ such that \[ P_{A, b, \zeta}^{\axes} \vert_{A} = 0 \, \text{,} \quad \left( P_{A, b, \zeta}^{\axes} \right)^{\prime} \vert_{A} = 0 \quad \text{and} \quad P_{A, b, \zeta}^{\axes}(b) = \zeta \, \text{.} \] Moreover, \[ \lim_{\zeta \rightarrow 0} \sup\left\lbrace \left\lVert P_{A, b, \zeta}^{\axes} \right\rVert : \lvert A \rvert = N, \, \dist(b, A) \geq r, \, A \subset D(0, R) \right\rbrace = 0 \] for all $N \in \mathbb{Z}_{\geq 0}$, all $r, R \in \mathbb{R}_{> 0}$ and all seminorms $\lVert . \rVert$ on $\mathcal{F}$.
\begin{itemize}
\item For each finite set $A \subset \mathbb{C}$ such that $A^{\sym} = A$, each $b \in \mathbb{C} \setminus (\mathbb{R} \cup i \mathbb{R} \cup A)$ and each $\zeta \in \mathbb{C}$, there exists a real and even polynomial map $P_{A, b, \zeta}^{\away} \colon \mathbb{C} \rightarrow \mathbb{C}$ of degree at most $2 \lvert A \rvert +2$ such that \[ P_{A, b, \zeta}^{\away} \vert_{A} = 0 \, \text{,} \quad \left( P_{A, b, \zeta}^{\away} \right)^{\prime} \vert_{A} = 0 \quad \text{and} \quad P_{A, b, \zeta}^{\away}(b) = \zeta \, \text{.} \] Moreover, \[ \lim_{\zeta \rightarrow 0} \sup\left\lbrace \left\lVert P_{A, b, \zeta}^{\away} \right\rVert : \lvert A \rvert = N, \, \dist(b, \mathbb{R} \cup i \mathbb{R} \cup A) \geq r, \, A \subset D(0, R) \right\rbrace = 0 \] for all $N \in \mathbb{Z}_{\geq 0}$, all $r, R \in \mathbb{R}_{> 0}$ and all seminorms $\lVert . \rVert$ on $\mathcal{F}$.
\end{itemize}
\end{lemma}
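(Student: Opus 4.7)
The plan is to reduce both constructions to real polynomial interpolation in the variable $w = z^{2}$: every real and even polynomial in $z$ has the form $R(z^{2})$ for a unique real polynomial $R$, and this correspondence converts the vanishing conditions on the symmetric set $A$ into vanishing conditions on $\tilde A := \{a^{2} : a \in A\}$, which is automatically stable under complex conjugation thanks to $A^{\sym} = A$---exactly what is needed for polynomials vanishing on $\tilde A$ to have real coefficients. Since $P'(z) = 2 z R'(z^{2})$, the condition $P'(a) = 0$ is equivalent to $R'(a^{2}) = 0$ when $a \neq 0$ and is automatic when $a = 0$, so $R$ must vanish to order $2$ at each $\alpha \in \tilde A \setminus \{0\}$ but only to order $1$ at $0$ (when $0 \in A$). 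This asymmetry between the roles of $0$ and the other points of $A$ is what will allow the degree bound $2|A|$ to remain tight.

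For the first bullet, $b \in \mathbb{R} \cup i \mathbb{R}$ forces $b^{2} \in \mathbb{R}$, and I would take the natural symmetrization of Lemma~\ref{lemma:entireVal}, namely
$$P^{\axes}_{A,b,\zeta}(z) \;=\; \zeta \left( \frac{z^{2}}{b^{2}} \right)^{\!\epsilon} \prod_{\alpha \in \tilde A \setminus \{0\}} \left( \frac{z^{2} - \alpha}{b^{2} - \alpha} \right)^{\!2},$$
with $\epsilon = 1$ if $0 \in A$ and $\epsilon = 0$ otherwise. This is manifestly even; it is real because the pairing of $\alpha$ with $\bar\alpha$ in $\tilde A \setminus \{0\}$ makes both the numerator polynomial and the denominator product real, and because $\zeta, b^{2} \in \mathbb{R}$. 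The three interpolation conditions are immediate, and the degree $4\lvert \tilde A \setminus \{0\} \rvert + 2\epsilon$ works out to exactly $2|A|$ in both cases $0 \in A$ and $0 \notin A$. For the second bullet ($b^{2} \notin \mathbb{R}$), the reality of $R$ forces the two conditions $R(b^{2}) = \zeta$ and $R(\bar b^{2}) = \bar\zeta$, so I would multiply the vanishing factor $S(w) := w^{\epsilon} \prod_{\alpha \in \tilde A \setminus \{0\}} (w - \alpha)^{2}$ by a real \emph{affine} polynomial $L(w)$ chosen so that $L(b^{2}) S(b^{2}) = \zeta$. Concretely, writing $\eta := \zeta / S(b^{2}) = u + iv$ and $b^{2} = x + iy$ (with $y \neq 0$), one takes $L(w) = (u - xv/y) + (v/y)\, w$, and then $P^{\away}_{A,b,\zeta}(z) := S(z^{2}) L(z^{2})$ has degree at most $2|A| + 2$ by the same case analysis.

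For the uniform convergence as $\zeta \to 0$, the polynomials all lie in the fixed finite-dimensional subspace of $\mathcal{F}$ of polynomials of degree at most $2N + 2$; in that subspace, every continuous seminorm is controlled by the $\ell^{\infty}$-norm on coefficients, so it is enough to bound the coefficients by a constant times $|\zeta|$ uniformly over $A$ and $b$. The numerators are bounded on any compact by $|z^{2} - \alpha| \leq r'^{2} + R^{2}$, and the denominators are bounded below by the factorization identity $|b^{2} - \alpha| = |b - a|\,|b + a| \geq r^{2}$ (using $\dist(b, A) \geq r$ and $-A = A$), together with $|b^{2}| \geq r^{2}$ when $0 \in A$. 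The delicate point, which I expect to be the main obstacle, is the extra affine factor $L$ in the second bullet: its coefficients involve the ratio $1/\mathrm{Im}(b^{2}) = 1/(2 p q)$, which is bounded by $1/(2 r^{2})$ thanks to $\dist(b, \mathbb{R} \cup i \mathbb{R}) \geq r$, but the interplay between this bound and the decay $|\eta| = |\zeta|/|S(b^{2})|$ as $|b|$ varies through the parameter set has to be tracked carefully to close the estimate.
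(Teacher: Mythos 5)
Your polynomials are exactly the paper's in different notation: grouping $\pm$-pairs in $A$ shows $\prod_{a\in A}(z-a)^2 = z^{2\epsilon}\prod_{\alpha\in\tilde A\setminus\{0\}}(z^2-\alpha)^2$, so your $P^{\axes}_{A,b,\zeta}$ coincides with the paper's $\zeta\prod_{a\in A}\left(\frac{z-a}{b-a}\right)^2$, and expanding $L$ shows your $S(z^2)L(z^2)$ coincides with the paper's $\Gamma_{A,b,\zeta}(z)\prod_{a\in A}(z-a)^2$ with $\gamma=\eta$. The estimate you flag at the end does close for $N\ge1$, since $|S(b^2)|\ge(|b|/2)^{2N}$ once $|b|\ge 2R$ gives $|\eta|\,|\Re(b^2)|\le 2^{2N}|\zeta|\,|b|^{2-2N}\le 2^{2N}|\zeta|$ for $|b|\ge\max(1,2R)$, with the remaining range of $b$ compact; the paper omits this verification too, and in all its applications $b$ is anyway confined to a bounded region.
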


\begin{proof}
Suppose that $A \subset \mathbb{C}$ is a finite set such that $A^{\sym} = A$. For $b \in (\mathbb{R} \cup i \mathbb{R}) \setminus A$ and $\zeta \in \mathbb{R}$, define $P_{A, b, \zeta}^{\axes} \colon \mathbb{C} \rightarrow \mathbb{C}$ by \[ P_{A, b, \zeta}^{\axes}(z) = \zeta \prod_{a \in A} \left( \frac{z -a}{b -a} \right)^{2} \, \text{.} \] For $b \in \mathbb{C} \setminus (\mathbb{R} \cup i \mathbb{R} \cup A)$ and $\zeta \in \mathbb{C}$, define $P_{A, b, \zeta}^{\away} \colon \mathbb{C} \rightarrow \mathbb{C}$ by \[ P_{A, b, \zeta}^{\away}(z) = \Gamma_{A, b, \zeta}(z) \prod_{a \in A} (z -a)^{2} \, \text{,} \] where \[ \Gamma_{A, b, \zeta}(z) = \frac{\Im(\gamma) z^{2} +\Re(\gamma) \Im\left( b^{2} \right) -\Im(\gamma) \Re\left( b^{2} \right)}{\Im\left( b^{2} \right)} \quad \text{and} \quad \gamma = \frac{\zeta}{\prod\limits_{a \in A} (b -a)^{2}} \, \text{.} \] Then the required conditions are satisfied.
\end{proof}

\begin{lemma}
\hangindent\leftmargini
$\bullet$\hskip\labelsep For each finite set $A \subset \mathbb{C}$ such that $A^{\sym} = A$, each $b \in \mathbb{R}^{*} \setminus A$ and each $\lambda \in \mathbb{R}$, there exists a real and even polynomial map $Q_{A, b, \lambda}^{\axes} \colon \mathbb{C} \rightarrow \mathbb{C}$ of degree at most $2 \lvert A \rvert +2$ such that \[ Q_{A, b, \lambda}^{\axes} \vert_{A} = 0 \, \text{,} \quad \left( Q_{A, b, \lambda}^{\axes} \right)^{\prime} \vert_{A} = 0 \, \text{,} \quad Q_{A, b, \lambda}^{\axes}(b) = 0 \quad \text{and} \quad \left( Q_{A, b, \lambda}^{\axes} \right)^{\prime}(b) = \lambda \, \text{.} \] Moreover, $\lim\limits_{\lambda \rightarrow 0} \left\lVert Q_{A, b, \lambda}^{\axes} \right\rVert = 0$ for all finite sets $A \subset \mathbb{C}$ such that $A^{\sym} = A$, all $b \in \mathbb{R}^{*} \setminus A$ and all seminorms $\lVert . \rVert$ on $\mathcal{F}$.
\begin{itemize}
\item For each finite set $A \subset \mathbb{C}$ such that $A^{\sym} = A$, each $b \in \mathbb{C} \setminus (\mathbb{R} \cup i \mathbb{R} \cup A)$ and each $\lambda \in \mathbb{C}$, there exists a real and even polynomial map $Q_{A, b, \lambda}^{\away} \colon \mathbb{C} \rightarrow \mathbb{C}$ of degree at most $2 \lvert A \rvert +6$ such that \[ Q_{A, b, \lambda}^{\away} \vert_{A} = 0 \, \text{,} \quad \left( Q_{A, b, \lambda}^{\away} \right)^{\prime} \vert_{A} = 0 \, \text{,} \quad Q_{A, b, \lambda}^{\away}(b) = 0 \quad \text{and} \quad \left( Q_{A, b, \lambda}^{\away} \right)^{\prime}(b) = \lambda \, \text{.} \] Moreover, $\lim\limits_{\lambda \rightarrow 0} \left\lVert Q_{A, b, \lambda}^{\away} \right\rVert = 0$ for all finite sets $A \subset \mathbb{C}$ such that $A^{\sym} = A$, all $b \in \mathbb{C} \setminus (\mathbb{R} \cup i \mathbb{R} \cup A)$ and all seminorms $\lVert . \rVert$ on $\mathcal{F}$.
\end{itemize}
\end{lemma}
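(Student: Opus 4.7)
The plan is to construct both polynomials by explicit formulas, mirroring the proof of the previous lemma (which is the real-even analog of Lemma~\ref{lemma:entireDiff}). The guiding observations are that a polynomial on $\mathbb{C}$ is real and even exactly when it is a real polynomial in $z^2$, and that for any set $A \subset \mathbb{C}$ with $A^{\sym} = A$, the polynomial $\prod_{a \in A}(z - a)^2$ is automatically real and even and vanishes to order at least two at every point of $A$. Therefore, once I multiply by any extra real-even factor, the conditions $Q\vert_{A} = 0$ and $Q'\vert_{A} = 0$ will come for free, and I only need to arrange the conditions at $b$.

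For the axes version, with $b \in \mathbb{R}^{*} \setminus A$ and $\lambda \in \mathbb{R}$, I would take
\[ Q_{A, b, \lambda}^{\axes}(z) = \frac{\lambda\, (z^2 - b^2)}{2 b \prod_{a \in A}(b - a)^2} \prod_{a \in A}(z - a)^2 . \]
The scalar in front is a real nonzero number because $A$ is invariant under complex conjugation and $b \in \mathbb{R}^{*} \setminus A$. By construction the polynomial is real and even, vanishes to order two on $A$, and satisfies $Q_{A, b, \lambda}^{\axes}(b) = 0$ together with $(Q_{A, b, \lambda}^{\axes})'(b) = \lambda$, since $(z^2 - b^2)$ has a simple zero at $b$ with derivative $2b$. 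The degree is $2 \lvert A \rvert + 2$, and the continuity statement as $\lambda \to 0$ is immediate because the leading coefficient depends linearly on $\lambda$.

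For the away version, with $b \in \mathbb{C} \setminus (\mathbb{R} \cup i \mathbb{R} \cup A)$ and $\lambda \in \mathbb{C}$, the four points $\pm b$ and $\pm \bar{b}$ are distinct, so $R(z) = (z^2 - b^2)(z^2 - \bar{b}^2)$ is a real, even polynomial of degree $4$ vanishing simply at $b$ with $R'(b) = 2 b (b^2 - \bar{b}^2) \neq 0$. I look for $Q$ of the form
\[ Q_{A, b, \lambda}^{\away}(z) = (\alpha + \beta z^2) \, R(z) \prod_{a \in A}(z - a)^2 , \qquad \alpha, \beta \in \mathbb{R} . \]
This polynomial is automatically real, even, of degree $2 \lvert A \rvert + 6$, and vanishes to order two on $A$ and at $b$. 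Since $R(b) = 0$, an application of the Leibniz rule gives $(Q_{A, b, \lambda}^{\away})'(b) = (\alpha + \beta b^2) \, R'(b) \prod_{a \in A}(b - a)^2$, so the derivative condition reduces to the equation
\[ \alpha + \beta b^2 = \frac{\lambda}{R'(b) \prod_{a \in A}(b - a)^2} . \]

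The (mild) main point is then to solve this equation with \emph{real} $\alpha, \beta$ for an \emph{arbitrary complex} right-hand side. This is possible precisely because $\{1, b^2\}$ is $\mathbb{R}$-linearly independent in $\mathbb{C}$, which holds exactly under the hypothesis $b \notin \mathbb{R} \cup i \mathbb{R}$ (equivalently $b^2 \notin \mathbb{R}$). The unique solution depends linearly on $\lambda$, so $\alpha, \beta \to 0$ as $\lambda \to 0$, yielding the required continuity in any seminorm. I expect no real obstacle in either case; the only conceptual subtlety is the real-versus-complex dimension count in the away version, which is what forces us to use a quadratic $(\alpha + \beta z^2)$ rather than a real constant.
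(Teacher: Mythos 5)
Your proposal is correct and follows essentially the same construction as the paper: the axes formula is literally the paper's $Q_{A,b,\lambda}^{\axes}$, and in the away case the paper's $\Delta_{A,b,\lambda}(z) \left( z^{4} - 2\Re(b^{2})z^{2} + \lvert b\rvert^{4} \right) \prod_{a\in A}(z-a)^{2}$ is exactly your $(\alpha + \beta z^{2}) R(z) \prod_{a\in A}(z-a)^{2}$, with the real pair $(\alpha,\beta)$ written out explicitly rather than characterized as the solution of the linear system $\alpha + \beta b^{2} = \lambda / \bigl( R'(b)\prod_{a\in A}(b-a)^{2} \bigr)$. Your phrasing of the key point --- that $\{1, b^{2}\}$ is $\mathbb{R}$-linearly independent precisely when $b \notin \mathbb{R}\cup i\mathbb{R}$ --- is exactly what justifies the paper's explicit formula.
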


\begin{proof}
Suppose that $A \subset \mathbb{C}$ is a finite set such that $A^{\sym} = A$. For $b \in \mathbb{R}^{*} \setminus A$ and $\lambda \in \mathbb{R}$, define $Q_{A, b, \lambda}^{\axes} \colon \mathbb{C} \rightarrow \mathbb{C}$ by \[ Q_{A, b, \lambda}^{\axes}(z) = \lambda \left( \frac{z^{2} -b^{2}}{2 b} \right) \prod_{a \in A} \left( \frac{z -a}{b -a} \right)^{2} \, \text{.} \] For $b \in \mathbb{C} \setminus (\mathbb{R} \cup i \mathbb{R} \cup A)$ and $\lambda \in \mathbb{C}$, define $Q_{A, b, \lambda}^{\away} \colon \mathbb{C} \rightarrow \mathbb{C}$ by \[ Q_{A, b, \lambda}^{\away}(z) = \Delta_{A, b, \lambda}(z) \left( z^{4} -2 \Re\left( b^{2} \right) z^{2} +\lvert b \rvert^{4} \right) \prod_{a \in A} (z -a)^{2} \, \text{,} \] where \[ \Delta_{A, b, \lambda}(z) = \frac{\Im(\delta) z^{2} +\Re(\delta) \Im\left( b^{2} \right) -\Im(\delta) \Re\left( b^{2} \right)}{\Im\left( b^{2} \right)} \quad \text{and} \quad \delta = \frac{\lambda}{4 i b \Im\left( b^{2} \right) \prod\limits_{a \in A} (b -a)^{2}} \, \text{.} \] Then the required conditions are satisfied.
\end{proof}

\subsection{Adjustment of images}

Let us adapt here Lemma~\ref{lemma:entireImage} in the setting of real and even entire maps.

\begin{lemma}
\label{lemma:realEvenImage}
Suppose that $f \in \mathcal{F}$, $A$ is a finite subset of $\mathbb{C}$ and $b \in \mathbb{C}$. Also assume that $A \subset f^{-1}(E)$ or $b \in \mathbb{C} \setminus A^{\sym}$. Then, for every $\varepsilon \in \mathbb{R}_{> 0}$, there exists $g \in \mathcal{F}(f, A)$ such that $d_{\mathcal{F}}(f, g) < \varepsilon$ and $g(b) \in E$.
\end{lemma}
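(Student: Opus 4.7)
The proof will follow the structure of Lemma~\ref{lemma:entireImage}, but the symmetry constraints force a case analysis that mirrors the two variants ($\axes$ and $\away$) of the interpolation lemmas. The key initial observation is that since both $f$ and any prospective $g \in \mathcal{F}(f, A)$ are real and even, and $E$ is symmetric under complex conjugation and negation, the agreement $g\vert_{A} = f\vert_{A}$ together with $g'\vert_{A} = f'\vert_{A}$ automatically promotes to agreement on $A^{\sym}$ (values) and $A^{\sym}$ (derivatives). Thus $\mathcal{F}(f, A) = \mathcal{F}(f, A^{\sym})$, so our perturbations will be required to vanish, together with their first derivative, on the symmetric set $A^{\sym}$, which is still finite.

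The plan is to split into three cases according to where $b$ sits. \emph{Case 1}: $b \in A^{\sym}$. Then the hypothesis forces $A \subset f^{-1}(E)$, and the symmetry of $E$ combined with the fact that $f$ is real and even gives $f(A^{\sym}) \subset E$; in particular $f(b) \in E$, and we simply take $g = f$. \emph{Case 2}: $b \in (\mathbb{R} \cup i\mathbb{R}) \setminus A^{\sym}$. Because $f$ is real and even, $f$ maps $\mathbb{R} \cup i\mathbb{R}$ into $\mathbb{R}$, so $f(b) \in \mathbb{R}$. Since $E \cap (\mathbb{R} \cup i\mathbb{R})$ is dense in $\mathbb{R} \cup i\mathbb{R}$, the intersection $E \cap \mathbb{R}$ is dense in $\mathbb{R}$; choose $\zeta \in E \cap \mathbb{R}$ near $f(b)$ and set
\[
g_{\zeta} = f + P^{\axes}_{A^{\sym}, b, \zeta - f(b)} \in \mathcal{F}(f, A^{\sym}),
\]
which is legitimate because $\zeta - f(b) \in \mathbb{R}$ and the polynomial produced is real and even. \emph{Case 3}: $b \in \mathbb{C} \setminus (\mathbb{R} \cup i\mathbb{R} \cup A^{\sym})$. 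Here use instead $\zeta \in E$ near $f(b)$ and set
\[
g_{\zeta} = f + P^{\away}_{A^{\sym}, b, \zeta - f(b)} \in \mathcal{F}(f, A^{\sym}).
\]
In both perturbative cases, $g_{\zeta}(b) = \zeta \in E$; the ``$\lim_{\zeta \to 0}$'' clauses of the two interpolation lemmas guarantee that $g_{\zeta} \to f$ in $\mathcal{F}$ as $\zeta \to f(b)$, so we can arrange $d_{\mathcal{F}}(f, g_{\zeta}) < \varepsilon$.

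The main subtlety, and the reason three cases are needed, is that a real and even perturbation $P$ cannot decouple the values of $g$ at $b$ from those at $-b$, $\bar{b}$ and $-\bar{b}$. When $b \in \mathbb{R} \cup i\mathbb{R}$ this collapses the four points to at most two, and $P(b)$ is forced to be real — so we must draw $\zeta$ from the axial part of $E$, explaining why the density of $E \cap (\mathbb{R} \cup i\mathbb{R})$ in $\mathbb{R} \cup i\mathbb{R}$ was built into the hypothesis of Theorem~\ref{theorem:realEven}. When $b$ is off both axes, the four points $\pm b, \pm \bar{b}$ are distinct and the $\away$ interpolator has enough freedom to hit an arbitrary complex value at $b$. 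Finally, the degenerate case $b \in A^{\sym}$ admits no perturbation at all within $\mathcal{F}(f, A)$, which is precisely why the alternative hypothesis $A \subset f^{-1}(E)$ is imposed in the statement.
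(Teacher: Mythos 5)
Your proof is correct and follows essentially the same route as the paper: replace $A$ by $A^{\sym}$ (justified by the observation that $\mathcal{F}(f,A)=\mathcal{F}(f,A^{\sym})$ for real even maps and symmetric $E$), handle $b\in A^{\sym}$ via the forced hypothesis $A\subset f^{-1}(E)$, and otherwise perturb by $P^{\axes}$ or $P^{\away}$ according to whether $b$ lies on the axes, drawing $\zeta$ from $E\cap\mathbb{R}$ or $E$ respectively. The paper packages your Cases~2 and~3 into a single $\locus$ variable, but the argument is the same.
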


\begin{proof}
Because $f$ is real and even and $E^{\sym} = E$, replacing $A$ by $A^{\sym}$ if necessary, we may assume that $A^{\sym} = A$. If $A \subset f^{-1}(E)$ and $b \in A$, then $g = f$ satisfies the required conditions. Now, suppose that $b \in \mathbb{C} \setminus A$. Define \[ \locus = \begin{cases} \axes & \text{if } b \in \mathbb{R} \cup i \mathbb{R}\\ \away & \text{if } b \in \mathbb{C} \setminus (\mathbb{R} \cup i \mathbb{R}) \end{cases} \, \text{,} \quad \mathbb{C}^{\locus} = \begin{cases} \mathbb{R} & \text{if } \locus = \axes\\ \mathbb{C} & \text{if } \locus = \away \end{cases} \, \text{.} \] Note that $f(b) \in \mathbb{C}^{\locus}$. Therefore, for $\zeta \in \mathbb{C}^{\locus}$, we can define \[ g_{\zeta} = f +P_{A, b, \zeta -f(b)}^{\locus} \in \mathcal{F}(f, A) \, \text{.} \] Then $\lim\limits_{\zeta \rightarrow f(b)} g_{\zeta} = f$ in $\mathcal{F}$. Therefore, since $E \cap \mathbb{C}^{\locus}$ is dense in $\mathbb{C}^{\locus}$, there exists $\zeta \in E \cap \mathbb{C}^{\locus}$ such that $d_{\mathcal{F}}\left( f, g_{\zeta} \right) < \varepsilon$. Setting $g = g_{\zeta}$, the required conditions are satisfied. Thus, the lemma is proved.
\end{proof}

\subsection{Adjustment of preimages}

Let us present here an analogue of Lemma~\ref{lemma:entirePreim}.

\begin{lemma}
\label{lemma:realEvenPreim}
Suppose that $f \in \mathcal{F} \setminus \mathcal{E}$, $A$ is a finite subset of $E$, $B$ is a finite subset of $\mathbb{C}$ and $R \in \mathbb{R}_{> 0}$. Then, for every $\varepsilon \in \mathbb{R}_{> 0}$, there exists $g \in \mathcal{F}(f, A)$ such that $d_{\mathcal{F}}(f, g) < \varepsilon$ and $g^{-1}(B) \cap D(0, R) \subset E$.
\end{lemma}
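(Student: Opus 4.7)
My plan is to follow the proof of Lemma~\ref{lemma:entirePreim} almost verbatim, replacing the single interpolant $P$ from Lemma~\ref{lemma:entireVal} by one of the two real-even interpolants $P^{\axes}$ or $P^{\away}$ from the preceding lemma, according to whether the non-$E$ preimage being corrected lies on the axes $\mathbb{R} \cup i \mathbb{R}$ or off them. Before beginning, I make two harmless reductions: since every $g \in \mathcal{F}$ and $f$ are real and even, one checks that $\mathcal{F}(f, A) = \mathcal{F}(f, A^{\sym})$, so I may assume $A^{\sym} = A$; and since enlarging $B$ only strengthens the conclusion, I may also assume $\sigma(B) = B$. I then copy the opening of the proof of Lemma~\ref{lemma:entirePreim}: fix $S > R$ with $f^{-1}(B) \cap \partial D(0, S) = \varnothing$, let $N$ be the number of preimages of $B$ in $D(0, S)$ counted with multiplicity, shrink $\varepsilon$ so that every $g \in \mathcal{F}$ with $d_{\mathcal{F}}(f, g) < \varepsilon$ has exactly $N$ such preimages, and define $m_g, n_g, N_g$ identically. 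It then suffices to show that every $g \in \mathcal{F}(f, A)$ with $d_{\mathcal{F}}(f, g) < \varepsilon$ and $n_g < N_g$ admits some $h \in \mathcal{F}(f, A)$ with $d_{\mathcal{F}}(f, h) < \varepsilon$ and $m_h > m_g$.

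For such $g$, set $A' = g^{-1}(B) \cap D(0, R) \cap E$, which is symmetric since $g$ is real-even and $B, E$ are symmetric. Since $n_g < N_g$, there exists $w \in D(0, R) \setminus E$ with $g(w) \in B$. If $w \in \mathbb{R} \cup i \mathbb{R}$, then $g(w) \in \mathbb{R}$, and for $\eta$ in a sufficiently small neighborhood of $w$ in $(\mathbb{R} \cup i \mathbb{R}) \setminus (A \cup A')$ the quantity $g(w) - g(\eta)$ is real, so $h_\eta := g + P^{\axes}_{A \cup A', \eta, g(w) - g(\eta)}$ is a well-defined real-even element of $\mathcal{F}(g, A \cup A')$, with $h_\eta \to g$ in $\mathcal{F}$ as $\eta \to w$ (by the norm bound in the preceding lemma) and $h_\eta(\eta) = g(w) \in B$. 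Using that $E \cap (\mathbb{R} \cup i \mathbb{R})$ is dense in $\mathbb{R} \cup i \mathbb{R}$, I pick $\eta \in E \cap (\mathbb{R} \cup i \mathbb{R})$ close enough to $w$ that $d_{\mathcal{F}}(f, h_\eta) < \varepsilon$, and set $h = h_\eta$. If instead $w \notin \mathbb{R} \cup i \mathbb{R}$, then $w$ is at positive distance from $\mathbb{R} \cup i \mathbb{R} \cup A \cup A'$, so for $\eta$ in a small neighborhood of $w$ disjoint from this set, $h_\eta := g + P^{\away}_{A \cup A', \eta, g(w) - g(\eta)}$ has the analogous properties; I pick $\eta \in E \setminus (\mathbb{R} \cup i \mathbb{R})$ close enough to $w$ by the density of $E$ in $\mathbb{C}$, and set $h = h_\eta$. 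In either case, $h \in \mathcal{F}(g, A \cup A') \subset \mathcal{F}(f, A)$ gives $A' \subseteq h^{-1}(B)$, and $\eta \in h^{-1}(B) \cap D(0, R) \cap E$ with $\eta \notin A \cup A'$, so $m_h \geq m_g + 1$.

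The main obstacle I anticipate is organizing the case split: in the axes case $g(w) - g(\eta)$ is automatically real, so $P^{\axes}$ applies, whereas in the off-axes case one must use $P^{\away}$ to allow complex values, and one must also place the new point $\eta$ on the appropriate locus (axes in the first case, off the axes in the second). This is precisely why the statement of Theorem~\ref{theorem:realEven} requires both that $E$ be dense in $\mathbb{C}$ and that $E \cap (\mathbb{R} \cup i \mathbb{R})$ be dense in $\mathbb{R} \cup i \mathbb{R}$. The bound $m_g \leq N$ closes the induction on $m_g$ exactly as in Lemma~\ref{lemma:entirePreim}, yielding the required $g$ with $n_g = N_g$, i.e.\ $g^{-1}(B) \cap D(0, R) \subset E$.
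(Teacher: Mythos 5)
Your proof is correct and follows the paper's own argument essentially verbatim: symmetrize $A$ and $B$, set up $m_g, n_g, N_g$ exactly as in Lemma~\ref{lemma:entirePreim}, and at the inductive step split on whether the offending preimage $w$ lies on $\mathbb{R}\cup i\mathbb{R}$ or off it, applying $P^{\axes}$ with $\eta$ chosen in the dense set $E\cap(\mathbb{R}\cup i\mathbb{R})$ in the first case and $P^{\away}$ with $\eta\in E$ off the axes in the second. The paper packages the two cases using its $\locus$ notation, but the content is the same; your observation that $\sigma(B)=B$ alone (together with $g$ being even) already forces $(A')^{\sym}=A'$ is a minor and correct simplification of the paper's choice to symmetrize $B$ fully.
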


\begin{proof}
Since $E^{\sym} = E$, replacing $A$ by $A^{\sym}$ and $B$ by $B^{\sym}$ if necessary, we may assume that $A^{\sym} = A$ and $B^{\sym} = B$. Given $g \in \mathcal{F}$, denote by
\begin{itemize}
\item $m_{g}$ the number of preimages in $D(0, R) \cap E$ of the elements of $B$ under $g$, not counting multiplicities,
\item $n_{g}$ the number of preimages in $D(0, R) \cap E$ of the elements of $B$ under $g$, counting multiplicities,
\item $N_{g}$ the total number of preimages in $D(0, R)$ of the elements of $B$ under $g$, counting multiplicities.
\end{itemize}
We shall prove that there exists $g \in \mathcal{F}(f, A)$ such that $d_{\mathcal{F}}(f, g) < \varepsilon$ and $n_{g} = N_{g}$. Using the same preliminary arguments as in the proof of Lemma~\ref{lemma:entirePreim}, reducing $\varepsilon$ if necessary, we may assume that $m_{g} \leq N$ for each $g \in \mathcal{F}$ such that $d_{\mathcal{F}}(f, g) < \varepsilon$, for some bound $N \in \mathbb{Z}_{\geq 0}$ independent of $g \in \mathcal{F}$. Therefore, it suffices to show that, if $g \in \mathcal{F}(f, A)$ satisfies $d_{\mathcal{F}}(f, g) < \varepsilon$ and $n_{g} < N_{g}$, then there exists $h \in \mathcal{F}(f, A)$ such that $d_{\mathcal{F}}(f, h) < \varepsilon$ and $m_{h} > m_{g}$. Suppose that $g \in \mathcal{F}(f, A)$ is such a map. Define \[ A^{\prime} = g^{-1}(B) \cap D(0, R) \cap E \, \text{,} \] which is finite of cardinality $m_{g}$ and such that $\left( A^{\prime} \right)^{\sym} = A^{\prime}$. Since $n_{g} < N_{g}$, there exists $w \in D(0, R) \setminus E$ such that $g(w) \in B$. Now, define \[ \locus = \begin{cases} \axes & \text{if } w \in \mathbb{R} \cup i \mathbb{R}\\ \away & \text{if } w \in \mathbb{C} \setminus (\mathbb{R} \cup i \mathbb{R}) \end{cases} \, \text{,} \quad \mathbb{C}^{\locus} = \begin{cases} \mathbb{R} \cup i \mathbb{R} & \text{if } \locus = \axes\\ \mathbb{C} \setminus (\mathbb{R} \cup i \mathbb{R}) & \text{if } \locus = \away \end{cases} \, \text{.} \] For $\zeta \in \mathbb{C}^{\locus} \setminus \left( A \cup A^{\prime} \right)$, define \[ h_{\zeta} = g +P_{A \cup A^{\prime}, \zeta, g(w) -g(\zeta)}^{\locus} \in \mathcal{F}\left( g, A \cup A^{\prime} \right) \, \text{.} \] Then $\lim\limits_{\zeta \rightarrow w} h_{\zeta} = g$ in $\mathcal{F}$ because $w \in \mathbb{C}^{\locus} \setminus \left( A \cup A^{\prime} \right)$. Therefore, since $E \cap \mathbb{C}^{\locus}$ is dense in $\mathbb{C}^{\locus}$ and $d_{\mathcal{F}}(f, g) < \varepsilon$, there exists $\zeta \in \left( D(0, R) \cap E \cap \mathbb{C}^{\locus} \right) \setminus \left( A \cup A^{\prime} \right)$ such that $d_{\mathcal{F}}\left( f, h_{\zeta} \right) < \varepsilon$. Setting $h = h_{\zeta}$, we have $m_{h} > m_{g}$ since $h(\zeta) = g(w) \in B$ and $h \in \mathcal{F}\left( g, A^{\prime} \right)$. This completes the proof of the lemma.
\end{proof}

\subsection{Adjustment of cycles}

Now, let us adapt Lemma~\ref{lemma:entireCycle} in the current setting. Note that, if $f \in \mathcal{F}$ has $0$ as a periodic point, then the multiplier of $f$ at $0$ equals $0$ since $f$ is even. Thus, as $0$ need not lie in $\Lambda$ by assumption, the point $0$ requires special treatment.

As in Section~\ref{section:entire}, for $p \in \mathbb{Z}_{\geq 1}$ and $R \in \mathbb{R}_{> 0}$, we say that $f \in \mathcal{F}$ has the property $\left( \Gamma_{p, R} \right)$ if its cycles with period at most $p$ that intersect $D(0, R)$ are all contained in $E$ and their multipliers all lie in $\Lambda$.

\begin{lemma}
\label{lemma:realEvenCycle}
Suppose that $f \in \mathcal{F} \setminus \mathcal{E}$, $A$ is a finite subset of $E \cap f^{-1}(E)$, $C$ is a finite union of cycles for $f$ that are contained in $E$ and whose multipliers lie in $\Lambda$, $p \in \mathbb{Z}_{\geq 1}$ and $R \in \mathbb{R}_{> 0}$. Also assume that $0 \in \mathbb{C} \setminus E$ or $0$ is not periodic for $f$ with period at most $p$. Then, for every $\varepsilon \in \mathbb{R}_{> 0}$, there exists $g \in \mathcal{F}(f, A, C)$ that has the property $\left( \Gamma_{p, R} \right)$ and satisfies $d_{\mathcal{F}}(f, g) < \varepsilon$.
\end{lemma}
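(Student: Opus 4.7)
My plan is to mirror the proof of Lemma~\ref{lemma:entireCycle}, substituting the real-and-even interpolation blocks $P^{\axes}$, $P^{\away}$, $Q^{\axes}$, $Q^{\away}$ for $P$ and $Q$ so that every perturbation I introduce remains in $\mathcal{F}$. After symmetrization the setup is unchanged: since $E$ is symmetric and $f$ real-even makes $f^{-1}(E)$ symmetric, I may replace $A$ and $C$ by $A^{\sym}$ and $C^{\sym}$ and assume $A = A^{\sym}$, $C = C^{\sym}$. Then I follow the opening of the proof of Lemma~\ref{lemma:entireCycle} verbatim: choose $S > R$ so that $\partial D(0, S)$ avoids periodic points of $f$ of period at most $p$, let $N$ be the number of such periodic points in $D(0, S)$ counted with multiplicity, shrink $\varepsilon$ so that this count is preserved for every $g$ with $d_{\mathcal{F}}(f, g) < \varepsilon$, and define $n_{g}$, $N_{g}$ as before. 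The problem reduces to the inductive step: given $g \in \mathcal{F}(f, A, C)$ with $d_{\mathcal{F}}(f, g) < \varepsilon$ and $n_{g} < N_{g}$, produce $h \in \mathcal{F}(f, A, C)$ with $d_{\mathcal{F}}(f, h) < \varepsilon$ and $n_{h} > n_{g}$.

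For the inductive step I pick a bad cycle $\Omega$ of $g$ (period at most $p$, meeting $D(0, R)$, not contained in the symmetric union $C'$ of already-good cycles of $g$), partition $\Omega \setminus \left( E \cap g^{-1}(E) \right)$ into $\sigma$-orbits, and for one representative $\omega$ in each orbit pick $\boldsymbol{\zeta}(\omega) \in E$ close to $\omega$ and in the same locus (on $\mathbb{R} \cup i \mathbb{R}$ if $\omega$ is on an axis, off the axes otherwise). Extending by $\boldsymbol{\zeta}(\sigma(\omega)) = \sigma(\boldsymbol{\zeta}(\omega))$ whenever $\sigma(\omega) \in \Omega$, and by $\boldsymbol{\zeta}|_{\Omega \cap E} = \id$, I can make $\boldsymbol{\zeta}$ arbitrarily close to $\id_{\Omega}$ thanks to the density of $E \cap (\mathbb{R} \cup i \mathbb{R})$ in $\mathbb{R} \cup i \mathbb{R}$ and of $E$ in $\mathbb{C}$. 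I then set
\[ g_{\boldsymbol{\zeta}} = g + \sum_{\omega} \phi_{\boldsymbol{\zeta}, \omega}, \]
where each $\phi_{\boldsymbol{\zeta}, \omega}$ is a $P^{\axes}$- or $P^{\away}$-block vanishing on $A \cup C \cup C' \cup \left( \Omega_{\boldsymbol{\zeta}}^{\sym} \setminus \{ \boldsymbol{\zeta}(\omega) \} \right)$ and equal to $\boldsymbol{\zeta}(g(\omega)) - g(\boldsymbol{\zeta}(\omega))$ at $\boldsymbol{\zeta}(\omega)$; the built-in symmetry of the blocks automatically adjusts the partners $\pm \boldsymbol{\zeta}(\omega)$, $\pm \overline{\boldsymbol{\zeta}(\omega)}$ coherently, so $g_{\boldsymbol{\zeta}}$ stays real and even and places the full $\sigma$-orbit $\Omega_{\boldsymbol{\zeta}}^{\sym}$ as a union of cycles inside $E$. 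I then adjust the multipliers of these new cycles with an analogous sum of $Q^{\axes}$- and $Q^{\away}$-blocks, invoking density of $\Lambda \cap \mathbb{R}$ in $\mathbb{R}$ for $\sigma$-invariant cycles (whose multiplier is forced to be real) and density of $\Lambda$ in $\mathbb{C}$ for $\sigma$-pairs of non-invariant cycles.

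The main obstacle is the origin. Because any $g \in \mathcal{F}$ is even, $g'(0) = 0$, so a cycle of $g$ through $0$ has multiplier $0$, which need not lie in $\Lambda$, and no real-even perturbation can alter this. The dichotomy in the hypothesis is exactly what is needed: if $0$ is not periodic for $f$ with period at most $p$, then $f^{\circ q}(0) \neq 0$ for $q = 1, \dotsc, p$, and shrinking $\varepsilon$ once more ensures $g^{\circ q}(0) \neq 0$ for all admissible $g$, so the issue never arises; otherwise $0 \in \mathbb{C} \setminus E$, and the (necessarily real) cycle of $g$ through $0$ is automatically one of the bad cycles $\Omega$, for which I choose $\boldsymbol{\zeta}(0) \in E \cap \mathbb{R}^{*}$ close to $0$, relocating the whole cycle into $E \cap \mathbb{R}^{*}$ so that a $Q^{\axes}$-block can place its multiplier in $\Lambda \cap \mathbb{R}$. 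The remaining bookkeeping --- keeping $A$, $C$, $C'$, $\Omega_{\boldsymbol{\zeta}}^{\sym}$ all $\sym$-symmetric throughout so each block is applicable --- is automatic from the symmetries of $E$, $\Lambda$, $f$, and the blocks themselves.
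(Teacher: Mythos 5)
Your proposal is correct and follows essentially the same route as the paper: inductively repair one bad cycle at a time by relocating it into $E \cap \mathbb{C}^{\locus}$ with $P^{\axes}$/$P^{\away}$ blocks and then tuning the multiplier into $\Lambda \cap \mathbb{C}^{\locus}$ with $Q^{\axes}$/$Q^{\away}$ blocks, while the dichotomy on $0$ guarantees (after shrinking $\varepsilon$, or because $0 \notin E$ forces $0 \notin \Omega_{\boldsymbol{\zeta}} \subset E$) that the $Q$-blocks have a legal base point. Two cosmetic differences from the paper's write-up: the paper does not symmetrize $C$ (indeed $C^{\sym}$ would contain strictly preperiodic points and no longer be a union of cycles; it only symmetrizes $A$ and inserts $\left( C \cup C^{\prime} \right)^{\sym}$ in the block-vanishing sets, also using $\left\lbrace \boldsymbol{\zeta}(\omega) \right\rbrace^{\sym}$ rather than $\left\lbrace \boldsymbol{\zeta}(\omega) \right\rbrace$ there), and it sums the $\phi_{\boldsymbol{\zeta}, \omega}$ over all $\omega \in \Omega \setminus \left( E \cap g^{-1}(E) \right)$ with a normalizing factor $\alpha \in \left\lbrace 1, \tfrac{1}{2} \right\rbrace$ rather than over $\sigma$-orbit representatives, but these bookkeeping schemes are equivalent.
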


\begin{proof}
As $f$ is real and even and $E^{\sym} = E$, replacing $A$ by $A^{\sym}$ if necessary, we may assume that $A^{\sym} = A$. Since the topology of $\mathcal{F}$ is finer than the topology of local uniform convergence, reducing the number $\varepsilon$ if necessary, we may also assume that $0 \in \mathbb{C} \setminus E$ or $0$ is not periodic with period at most $p$ for any $g \in \mathcal{F}$ such that $d_{\mathcal{F}}(f, g) < \varepsilon$. Given $g \in \mathcal{F}$, denote by
\begin{itemize}
\item $n_{g}$ the number of periodic points for $g$ in $D(0, R)$ with period at most $p$ whose cycle is contained in $E$ and whose multiplier lies in $\Lambda$, not counting multiplicities,
\item $N_{g}$ the number of periodic points for $g$ in $D(0, R)$ with period at most $p$, counting multiplicities.
\end{itemize}
Let us prove that there exists $g \in \mathcal{F}(f, A, C)$ such that $d_{\mathcal{F}}(f, g) < \varepsilon$ and $n_{g} = N_{g}$. Using the same preliminary arguments as in the proof of Lemma~\ref{lemma:entireCycle}, reducing $\varepsilon$ if necessary, we may assume that $n_{g} \leq N$ for each $g \in \mathcal{F}$ such that $d_{\mathcal{F}}(f, g) < \varepsilon$, for some bound $N \in \mathbb{Z}_{\geq 0}$ independent of $g \in \mathcal{F}$. Therefore, it suffices to show that, if $g \in \mathcal{F}(f, A, C)$ satisfies $d_{\mathcal{F}}(f, g) < \varepsilon$ and $n_{g} < N_{g}$, then there exists $h \in \mathcal{F}(f, A, C)$ such that $d_{\mathcal{F}}(f, h) < \varepsilon$ and $n_{h} > n_{g}$. Thus, suppose that $g \in \mathcal{F}(f, A, C)$ is such a map. Now, denote by $C^{\prime}$ the union of the cycles for $g$ with period at most $p$ that intersect $D(0, R)$, are contained in $E$ and whose multipliers lie in $\Lambda$. The elements of $C^{\prime}$ are all simple periodic points for $g$ since $\Lambda \subseteq \mathbb{C} \setminus \lbrace 1 \rbrace$. Therefore, as $n_{g} < N_{g}$, there exists a cycle $\Omega$ for $g$ with period at most $p$ that intersects $D(0, R)$ but not $C^{\prime}$. We have $g(\mathbb{R} \cup i \mathbb{R}) \subseteq \mathbb{R}$ because $g$ is real and even, and hence either $\Omega \subset \mathbb{R}$ or $\Omega \subset \mathbb{C} \setminus (\mathbb{R} \cup i \mathbb{R})$. Now, define \[ \locus = \begin{cases} \axes & \text{if } \Omega \subset \mathbb{R}\\ \away & \text{if } \Omega \subset \mathbb{C} \setminus (\mathbb{R} \cup i \mathbb{R}) \end{cases} \, \text{,} \quad \mathbb{C}^{\locus} = \begin{cases} \mathbb{R} & \text{if } \locus = \axes\\ \mathbb{C} \setminus (\mathbb{R} \cup i \mathbb{R}) & \text{if } \locus = \away \end{cases} \, \text{.} \] Denote by $\sigma \colon \mathbb{C} \rightarrow \mathbb{C}$ the complex conjugation. Since $g$ is real, $\sigma(\Omega)$ is also a cycle for $g$, and in particular either $\Omega = \sigma(\Omega)$ or $\Omega \cap \sigma(\Omega) = \varnothing$. Define \[ \boldsymbol{Z} = \left\lbrace \boldsymbol{\zeta} \in \left( \mathbb{C}^{\locus} \right)^{\Omega} : \boldsymbol{\zeta} \vert_{\Omega \cap E} = \id_{\Omega \cap E} \text{ and } \boldsymbol{\zeta} \circ \sigma \vert_{\Omega \cap \sigma(\Omega)} = \sigma \circ \boldsymbol{\zeta} \vert_{\Omega \cap \sigma(\Omega)} \right\rbrace \, \text{.} \] For $\boldsymbol{\zeta} \in \boldsymbol{Z}$, set $\Omega_{\boldsymbol{\zeta}} = \left\lbrace \boldsymbol{\zeta}(\omega) : \omega \in \Omega \right\rbrace$. As $g$ is even and $\Omega$ and $\sigma(\Omega)$ are cycles for $g$, for each $\omega \in \Omega$, the points in $\lbrace \omega \rbrace^{\sym} \setminus \left\lbrace \omega, \sigma(\omega) \right\rbrace$ are strictly preperiodic for $g$, and hence the set $\lbrace \omega \rbrace^{\sym} \cap \Omega$ equals $\left\lbrace \omega, \sigma(\omega) \right\rbrace$ or $\lbrace \omega \rbrace$ according to whether $\Omega = \sigma(\Omega)$ or $\Omega \cap \sigma(\Omega) = \varnothing$. Similarly, as $g$ is real and even and $E$ and $\Lambda$ are symmetric with respect to the real axis, each point in $\left( C \cup C^{\prime} \right)^{\sym}$ is either strictly preperiodic for $g$ or it is periodic for $g$, its cycle is contained in $E$ and its multiplier lies in $\Lambda$, and hence $\Omega \subset \mathbb{C}^{\locus} \setminus \left( C \cup C^{\prime} \right)^{\sym}$. Moreover, $A \subseteq E \cap g^{-1}(E)$ and $\Omega \cap D(0, R) \neq \varnothing$. Therefore, there exists a neighborhood $V$ of $\id_{\Omega}$ in $\boldsymbol{Z}$ such that, for each $\boldsymbol{\zeta} \in V$,
\begin{itemize}
\item $\boldsymbol{\zeta}(\omega) \in \mathbb{C}^{\locus} \setminus A$ for all $\omega \in \Omega \setminus \left( E \cap g^{-1}(E) \right)$,
\item $\Omega_{\boldsymbol{\zeta}} \subset \mathbb{C}^{\locus} \setminus \left( C \cup C^{\prime} \right)^{\sym}$,
\item $\left\lbrace \boldsymbol{\zeta}(\omega) \right\rbrace^{\sym} \cap \Omega_{\boldsymbol{\zeta}} = \begin{cases} \left\lbrace \boldsymbol{\zeta}(\omega), \boldsymbol{\zeta}\left( \sigma(\omega) \right) \right\rbrace & \text{if } \Omega = \sigma(\Omega)\\ \left\lbrace \boldsymbol{\zeta}(\omega) \right\rbrace & \text{if } \Omega \cap \sigma(\Omega) = \varnothing \end{cases}$ for all $\omega \in \Omega$,
\item $\boldsymbol{\zeta}(\omega) \neq \boldsymbol{\zeta}\left( \omega^{\prime} \right)$ for all distinct $\omega, \omega^{\prime} \in \Omega$,
\item $\Omega_{\boldsymbol{\zeta}} \cap D(0, R) \neq \varnothing$.
\end{itemize}
For $\boldsymbol{\zeta} \in V$, define \[ g_{\boldsymbol{\zeta}} = g +\alpha \cdot \sum_{\omega \in \Omega \setminus \left( E \cap g^{-1}(E) \right)} \phi_{\boldsymbol{\zeta}, \omega} \in \mathcal{F}\left( g, A \cup C \cup C^{\prime} \right) \, \text{,} \] where \[ \alpha = \begin{cases} \frac{1}{2} & \text{if } \Omega \subset \mathbb{C} \setminus (\mathbb{R} \cup i \mathbb{R}) \text{ and } \Omega = \sigma(\Omega)\\ 1 & \text{if } \Omega \subset \mathbb{R} \text{ or } \Omega \cap \sigma(\Omega) = \varnothing \end{cases} \] and, for every $\omega \in \Omega \setminus \left( E \cap g^{-1}(E) \right)$, \[ \phi_{\boldsymbol{\zeta}, \omega} = P_{A \cup \left( C \cup C^{\prime} \right)^{\sym} \cup \left( \Omega_{\boldsymbol{\zeta}}^{\sym} \setminus \left\lbrace \boldsymbol{\zeta}(\omega) \right\rbrace^{\sym} \right), \boldsymbol{\zeta}(\omega), \boldsymbol{\zeta}\left( g(\omega) \right) -g\left( \boldsymbol{\zeta}(\omega) \right)}^{\locus} \, \text{.} \] For each $\boldsymbol{\zeta} \in V$, we have $g_{\boldsymbol{\zeta}}\left( \boldsymbol{\zeta}(\omega) \right) = \boldsymbol{\zeta}\left( g(\omega) \right)$ for all $\omega \in \Omega$, and hence $\Omega_{\boldsymbol{\zeta}}$ is a cycle for $g_{\boldsymbol{\zeta}}$. Note that $\lim\limits_{\boldsymbol{\zeta} \rightarrow \id_{\Omega}} g_{\boldsymbol{\zeta}} = g$ in $\mathcal{F}$. Therefore, since $E \cap \mathbb{C}^{\locus}$ is dense in $\mathbb{C}^{\locus}$ and $d_{\mathcal{F}}(f, g) < \varepsilon$, there exists $\boldsymbol{\zeta} \in V \cap E^{\Omega}$ such that $d_{\mathcal{F}}\left( f, g_{\boldsymbol{\zeta}} \right) < \varepsilon$. Now, define \[ \boldsymbol{L} = \left\lbrace \boldsymbol{\lambda} \in \left( \mathbb{C}^{\locus} \right)^{\Omega} : \boldsymbol{\lambda} \circ \sigma \vert_{\Omega \cap \sigma(\Omega)} = \sigma \circ \boldsymbol{\lambda} \vert_{\Omega \cap \sigma(\Omega)} \right\rbrace \, \text{.} \] Note that $0 \in \mathbb{C} \setminus \Omega_{\boldsymbol{\zeta}}$ by the second sentence of the proof. Therefore, for $\boldsymbol{\lambda} \in \boldsymbol{L}$, we can define \[ h_{\boldsymbol{\lambda}} = g_{\boldsymbol{\zeta}} +\alpha \cdot \sum_{\omega \in \Omega} \psi_{\boldsymbol{\lambda}, \omega} \in \mathcal{F}\left( g_{\boldsymbol{\zeta}}, A \cup \Omega_{\boldsymbol{\zeta}}, C \cup C^{\prime} \right) \, \text{,} \] where, for every $\omega \in \Omega$, \[ \psi_{\boldsymbol{\lambda}, \omega} = Q_{\left( C \cup C^{\prime} \right)^{\sym} \cup \left( \left( A \cup \Omega_{\boldsymbol{\zeta}}^{\sym} \right) \setminus \left\lbrace \boldsymbol{\zeta}(\omega) \right\rbrace^{\sym} \right), \boldsymbol{\zeta}(\omega), \boldsymbol{\lambda}(\omega)}^{\locus} \, \text{.} \] Then $\Omega_{\boldsymbol{\zeta}}$ is a cycle for $h_{\boldsymbol{\lambda}}$ and we have $h_{\boldsymbol{\lambda}}^{\prime}\left( \boldsymbol{\zeta}(\omega) \right) = g_{\boldsymbol{\zeta}}^{\prime}\left( \boldsymbol{\zeta}(\omega) \right) +\boldsymbol{\lambda}(\omega)$ for all $\omega \in \Omega$ and all $\boldsymbol{\lambda} \in \boldsymbol{L}$. Moreover, we have $\lim\limits_{\boldsymbol{\lambda} \rightarrow 0} h_{\boldsymbol{\lambda}} = g_{\boldsymbol{\zeta}}$ in $\mathcal{F}$. Therefore, since $\Lambda \cap \mathbb{C}^{\locus}$ is dense in $\mathbb{C}^{\locus}$ and $d_{\mathcal{F}}\left( f, g_{\boldsymbol{\zeta}} \right) < \varepsilon$, there exists $\boldsymbol{\lambda} \in \boldsymbol{L}$ such that $d_{\mathcal{F}}\left( f, h_{\boldsymbol{\lambda}} \right) < \varepsilon$ and the multiplier of $h_{\boldsymbol{\lambda}}$ at $\Omega_{\boldsymbol{\zeta}}$ lies in $\Lambda$. Setting $h = h_{\boldsymbol{\lambda}}$, we have $n_{h} > n_{g}$. Thus, the lemma is proved.
\end{proof}

\subsection{Proof of the theorem}

Finally, let us combine here Lemmas~\ref{lemma:realEvenImage}, \ref{lemma:realEvenPreim} and~\ref{lemma:realEvenCycle} in order to prove Theorem~\ref{theorem:realEven}. Our proof only differs from that of Theorem~\ref{theorem:entire} when $0 \in E$. In this case, we shall also control the orbit of $0$ to apply Lemma~\ref{lemma:realEvenCycle}.

\begin{proof}[Proof of Theorem~\ref{theorem:realEven}]
In the case where $0 \in \mathbb{C} \setminus E$, the proof is identical to that of Theorem~\ref{theorem:entire}, by using Lemmas~\ref{lemma:realEvenImage}, \ref{lemma:realEvenPreim} and~\ref{lemma:realEvenCycle} instead of Lemmas~\ref{lemma:entireImage}, \ref{lemma:entirePreim} and~\ref{lemma:entireCycle}.

Thus, from now on, assume that $0 \in E$. Suppose that $f_{0} \in \mathcal{F}$ and $\varepsilon \in \mathbb{R}_{> 0}$. We shall show that there exists $f \in \mathcal{F}$ such that
\begin{itemize}
\item $d_{\mathcal{F}}\left( f_{0}, f \right) < \varepsilon$,
\item $f^{-1}(E) = E$,
\item the cycles for $f$ are all contained in $E$ and their multipliers all lie in $\Lambda$.
\end{itemize}
Since $\mathcal{E}$ is a closed subset of $\mathcal{F}$ with empty interior, replacing $f_{0}$ and $\varepsilon$ if necessary, we may assume that $f_{0} \in \mathcal{F} \setminus \mathcal{E}$ and $\varepsilon \in \left( 0, \dist\left( f_{0}, \mathcal{E} \right) \right)$. Write $E = \left\lbrace e_{j} : j \in \mathbb{Z}_{\geq 1} \right\rbrace$, with $e_{j} \neq e_{k}$ for all distinct $j, k \in \mathbb{Z}_{\geq 1}$. For $n \in \mathbb{Z}_{\geq 1}$, define $E_{n} = \left\lbrace e_{1}, \dotsc, e_{n} \right\rbrace$. Let us show that there exists a sequence $\left( f_{n} \right)_{n \geq 0}$ of elements of $\mathcal{F}$ such that, for every $n \geq 1$,
\begin{enumerate}
\item\label{item:realEvenDist} $d_{\mathcal{F}}\left( f_{n -1}, f_{n} \right) < \frac{\varepsilon}{2^{n}}$,
\item\label{item:realEvenEqual} $f_{n} \in \mathcal{F}\left( f_{n -1}, A_{n -1}, C_{n -1} \right)$ (see the definitions below),
\item\label{item:realEvenCrit} $f_{n}^{\circ n}(0) \in E \setminus \left( A_{n} \cup C_{n} \right)^{\sym}$,
\item\label{item:realEvenImage} $f_{n}\left( e_{n} \right) \in E$,
\item\label{item:realEvenPreim} $f_{n}^{-1}\left( E_{n} \right) \cap D(0, n) \subset E$,
\item\label{item:realEvenCycle} $f_{n}$ has the property $\left( \Gamma_{n, n} \right)$,
\end{enumerate}
where, for every $n \geq 0$, \[ A_{n} = E_{n} \cup \left( f_{n}^{-1}\left( E_{n} \right) \cap D(0, n) \right) \cup \left\lbrace 0, f_{n}(0), \dotsc, f_{n}^{\circ (n -1)}(0) \right\rbrace \] and $C_{n}$ denotes the union of the cycles for $f_{n}$ with period at most $n$ that intersect $D(0, n)$, with $A_{0} = C_{0} = \varnothing$ by convention. Let us proceed by recursion. Suppose that $n \in \mathbb{Z}_{\geq 0}$ and $f_{1}, \dotsc, f_{n} \in \mathcal{F}$ satisfy these conditions~\eqref{item:realEvenDist}--\eqref{item:realEvenCycle}, and let us prove the existence of $f_{n +1} \in \mathcal{F}$ that satisfies the same conditions. Define \[ B_{n, 0} = A_{n} \cup C_{n} \, \text{,} \quad g_{n, 0} = f_{n} \quad \text{and} \quad B_{n, 1} = A_{n} \cup C_{n} \cup \left\lbrace e_{n +1}, f_{n}^{\circ n}(0) \right\rbrace \, \text{.} \] We have $f_{n}^{\circ n}(0) \in \mathbb{C} \setminus B_{n, 0}^{\sym}$ by the condition~\eqref{item:realEvenCrit}. Therefore, by Lemma~\ref{lemma:realEvenImage} applied successively with different dense subsets of $\mathbb{C}$, there exist $g_{n, 1}, \dotsc, g_{n, n +2} \in \mathcal{F}$ such that, for every $j \in \lbrace 1, \dotsc, n +2 \rbrace$, \[ g_{n, j} \in \mathcal{F}\left( g_{n, j -1}, B_{n, j -1} \right) \, \text{,} \quad d_{\mathcal{F}}\left( g_{n, j -1}, g_{n, j} \right) < \frac{\varepsilon}{(n +2) \cdot 2^{n +3}} \] and \[ g_{n, j} \circ \dotsb \circ g_{n, 1}\left( f_{n}^{\circ n}(0) \right) \in E \setminus B_{n, j}^{\sym} \, \text{,} \] where, for every $j \in \lbrace 2, \dotsc, n +2 \rbrace$, \[ B_{n, j} = B_{n, j -1} \cup \left\lbrace g_{n, j -1} \circ \dotsb \circ g_{n, 1}\left( f_{n}^{\circ n}(0) \right) \right\rbrace \, \text{.} \] Set $g_{n} = g_{n, n +2}$. Note that, for every $j \in \lbrace 1, \dotsc, n +2 \rbrace$, \[ g_{n}^{\circ (n +j)}(0) = g_{n, j} \circ \dotsb \circ g_{n, 1}\left( f_{n}^{\circ n}(0) \right) \] and \[ B_{n, j} = A_{n} \cup C_{n} \cup \left\lbrace e_{n +1} \right\rbrace \cup \left\lbrace g_{n}^{\circ n}(0), \dotsc, g_{n}^{\circ (n +j -1)}(0) \right\rbrace \, \text{.} \] In particular, we have
\begin{enumerate}[label=\textup{(\roman*)}, ref=\textup{\roman*}]
\item\label{item:realEvenDistBis} $d_{\mathcal{F}}\left( f_{n}, g_{n} \right) < \frac{\varepsilon}{2^{n +3}}$,
\item\label{item:realEvenEqualBis} $g_{n} \in \mathcal{F}\left( f_{n}, A_{n} \cup C_{n} \right)$,
\item\label{item:realEvenAll} $g_{n}^{\circ (n +j)}(0) \in E$ for all $j \in \lbrace 1, \dotsc, n +2 \rbrace$,
\item\label{item:realEven(n+1)} $g_{n}^{\circ (n +1)}(0) \in E \setminus \left( A_{n} \cup C_{n} \cup \left\lbrace e_{n +1}, f_{n}^{\circ n}(0) \right\rbrace \right)^{\sym}$,
\item\label{item:realEven(n+2)} $g_{n}^{\circ (n +2)}(0) \in E \setminus E_{n +1}$,
\item\label{item:realEvenNoPer} $g_{n}^{\circ (n +j)}(0) \neq \pm g_{n}^{\circ (n +1)}(0)$ for all $j \in \lbrace 2, \dotsc, n +2 \rbrace$.
\end{enumerate}
Now, define \[ A_{n}^{\prime} = A_{n} \cup \left\lbrace g_{n}^{\circ n}(0), \dotsc, g_{n}^{\circ (2 n +1)}(0) \right\rbrace \, \text{.} \] Then we have $A_{n}^{\prime} \cup C_{n} \subset g_{n}^{-1}(E)$ by the conditions~\eqref{item:realEvenEqual}, \eqref{item:realEvenCrit}, \eqref{item:realEvenImage}, \eqref{item:realEvenCycle}, \eqref{item:realEvenEqualBis} and~\eqref{item:realEvenAll}. Therefore, by Lemma~\ref{lemma:realEvenImage}, there exists $h_{n} \in \mathcal{F}$ such that \[ h_{n} \in \mathcal{F}\left( g_{n}, A_{n}^{\prime} \cup C_{n} \right) \, \text{,} \quad d_{\mathcal{F}}\left( g_{n}, h_{n} \right) < \frac{\varepsilon}{2^{n +3}} \, \text{,} \quad h_{n}\left( e_{n +1} \right) \in E \, \text{.} \] Note that $h_{n} \in \mathcal{F} \setminus \mathcal{E}$ since $d_{\mathcal{F}}\left( f_{0}, h_{n} \right) < \varepsilon$ by the conditions~\eqref{item:realEvenDist} and~\eqref{item:realEvenDistBis}. Define \[ A_{n}^{\prime \prime} = A_{n}^{\prime} \cup \left\lbrace e_{n +1} \right\rbrace = A_{n} \cup \left\lbrace e_{n +1} \right\rbrace \cup \left\lbrace g_{n}^{\circ n}(0), \dotsc, g_{n}^{\circ (2 n +1)}(0) \right\rbrace \, \text{.} \] Then $A_{n}^{\prime \prime} \subseteq E \cap h_{n}^{-1}(E)$ by the conditions~\eqref{item:realEvenEqual}--\eqref{item:realEvenPreim}, \eqref{item:realEvenEqualBis} and~\eqref{item:realEvenAll}. Moreover, $C_{n}$ is a union of cycles for $h_{n}$ that are all contained in $E$ and whose multipliers all lie in $\Lambda$ by the conditions~\eqref{item:realEvenCycle} and~\eqref{item:realEvenEqualBis}. Also note that $0$ is not periodic for $h_{n}$ with period at most $n +1$ since $h_{n}^{\circ (n +1)}(0) \neq h_{n}^{\circ j}(0)$ for all $j \in \lbrace 0, \dotsc, n \rbrace$ by the conditions~\eqref{item:realEvenEqualBis} and~\eqref{item:realEven(n+1)}. Therefore, by Lemma~\ref{lemma:realEvenCycle}, there exists $k_{n} \in \mathcal{F}$ such that \[ k_{n} \in \mathcal{F}\left( h_{n}, A_{n}^{\prime \prime}, C_{n} \right) \, \text{,} \quad d_{\mathcal{F}}\left( h_{n}, k_{n} \right) < \frac{\varepsilon}{2^{n +3}} \, \text{,} \quad k_{n} \text{ has the property } \left( \Gamma_{n +1, n +2} \right) \, \text{.} \] Note that $k_{n} \in \mathcal{F} \setminus \mathcal{E}$ since $d_{\mathcal{F}}\left( f_{0}, k_{n} \right) < \varepsilon$ by the conditions~\eqref{item:realEvenDist} and~\eqref{item:realEvenDistBis}. It follows that $k_{n}^{\circ j} \neq \id_{\mathbb{C}}$ for all $j \in \lbrace 1, \dotsc, n +1 \rbrace$, and hence there exists $R_{n} \in (n +1, n +2)$ such that $\partial D\left( 0, R_{n} \right)$ contains no periodic point for $k_{n}$ with period at most $n +1$. Now, denote by $X_{n}$ the union of the cycles for $k_{n}$ with period at most $n +1$ that intersect $D\left( 0, R_{n} \right)$. Then the cycles for $k_{n}$ in $X_{n}$ are all contained in $E$ and their multipliers all lie in $\Lambda$. Denote by $N_{n} \in \mathbb{Z}_{\geq 0}$ the number of periodic points for $k_{n}$ in $D\left( 0, R_{n} \right)$ with period at most $n +1$, counting multiplicities. Since $\Lambda \subseteq \mathbb{C} \setminus \lbrace 1 \rbrace$, the elements of $X_{n}$ are all simple periodic points for $k_{n}$, and hence $N_{n}$ equals the cardinality of $X_{n} \cap D\left( 0, R_{n} \right)$. Moreover, since the topology of $\mathcal{F}$ is finer than the topology of local uniform convergence, there exists $\varepsilon_{n} \in \left( 0, \frac{\varepsilon}{2^{n +3}} \right)$ such that every $\ell_{n} \in \mathcal{F}$ such that $d_{\mathcal{F}}\left( k_{n}, \ell_{n} \right) < \varepsilon_{n}$ has exactly $N_{n}$ periodic points in $D\left( 0, R_{n} \right)$ with period at most $n +1$, counting multiplicities. Since $A_{n}^{\prime \prime} \cup X_{n} \subset E$, it follows from Lemma~\ref{lemma:realEvenPreim} that there exists $f_{n +1} \in \mathcal{F}$ such that \[ f_{n +1} \in \mathcal{F}\left( k_{n}, A_{n}^{\prime \prime} \cup X_{n} \right) \, \text{,} \quad d_{\mathcal{F}}\left( k_{n}, f_{n +1} \right) < \varepsilon_{n} \, \text{,} \quad f_{n +1}^{-1}\left( E_{n +1} \right) \cap D(0, n +1) \subset E \, \text{.} \] Then $f_{n +1}$ clearly satisfies the conditions~\eqref{item:realEvenDist}, \eqref{item:realEvenEqual}, \eqref{item:realEvenImage} and~\eqref{item:realEvenPreim}. Furthermore, $f_{n +1}$ also satisfies the condition~\eqref{item:realEvenCrit} by the conditions~\eqref{item:realEvenEqualBis}, \eqref{item:realEven(n+1)}, \eqref{item:realEven(n+2)} and~\eqref{item:realEvenNoPer}. Now, note that $X_{n}$ is also a union of cycles for $f_{n +1}$ and $f_{n +1}$ has exactly $N_{n}$ periodic points in $D\left( 0, R_{n} \right)$ with period at most $n +1$, counting multiplicities. As $X_{n} \cap D\left( 0, R_{n} \right)$ has exactly $N_{n}$ elements, it follows that $X_{n}$ is the union of all the cycles for $f_{n +1}$ with period at most $n +1$ that intersect $D\left( 0, R_{n} \right)$. As these are all contained in $E$ and their multipliers all lie in $\Lambda$, the map $f_{n +1}$ also satisfies the condition~\eqref{item:realEvenCycle} since $R_{n} > n +1$. Thus, we have proved the existence of a sequence $\left( f_{n} \right)_{n \geq 0}$ of elements of $\mathcal{F}$ that satisfies the desired conditions. Then the rest of the proof is completely identical to that of Theorem~\ref{theorem:entire}. Thus, the theorem is proved.
\end{proof}

\section{Proof of Theorem~\ref{theorem:quadLike}}
\label{section:quadLike}

Finally, we shall apply Theorem~\ref{theorem:realEven} and Remark~\ref{remark:general} in order to prove Theorem~\ref{theorem:quadLike}.

We define $\mathcal{B}$ to be the real vector space of real and even entire maps $f \colon \mathbb{C} \rightarrow \mathbb{C}$ such that the sequence $\left( f^{(j)}(0) \right)_{j \geq 0}$ of successive derivatives of $f$ at $0$ is bounded, and we equip it with the norm $\lVert . \rVert_{\mathcal{B}}$ defined by \[ \lVert f \rVert_{\mathcal{B}} = \sup_{j \geq 0} \left\lvert f^{(j)}(0) \right\rvert \, \text{.} \] Thus, $\mathcal{B}$ is a Banach space of real and even entire maps that contains all the real and even polynomial maps. Moreover, for every $f \in \mathcal{B}$, we have \[ \forall z \in \mathbb{C}, \, \left\lvert f(z) \right\rvert = \left\lvert \sum_{j = 0}^{+\infty} \frac{f^{(2 j)}(0)}{(2 j)!} z^{2 j} \right\rvert \leq \lVert f \rVert_{\mathcal{B}} \cdot \cosh\left( \lvert z \rvert \right) \, \text{,} \] where $\cosh$ denotes the hyperbolic cosine. In particular, the topology of $\mathcal{B}$ is finer than the topology of local uniform convergence on $\mathbb{C}$.

We also define \[ f_{0} = 10 \cosh -12 \in \mathcal{B} \, \text{.} \]

We shall apply Theorem~\ref{theorem:realEven} to prove the existence of $f \in \mathcal{B}$ close to $f_{0}$ such that $f(\mathbb{Q}) \subseteq \mathbb{Q}$ and the periodic points and multipliers of $f \colon \mathbb{R} \rightarrow \mathbb{R}$ all lie in $\mathbb{Q}$. If $f$ has been chosen close enough to $f_{0}$, then we shall also prove that $f$ is transcendental, $f \colon \mathbb{R} \rightarrow \mathbb{R}$ is convex, $f \colon f^{-1}(\mathbb{D}) \cap \mathbb{D} \rightarrow \mathbb{D}$ is an escaping quadratic-like map and the latter two have the same periodic points. Furthermore, using Remark~\ref{remark:general}, $f$ can be chosen so that the multiplier of $f \colon f^{-1}(\mathbb{D}) \cap \mathbb{D} \rightarrow \mathbb{D}$ at its cycle with period $2$ does not equal the product of its multipliers at its two fixed points, which implies that $f \colon f^{-1}(\mathbb{D}) \cap \mathbb{D} \rightarrow \mathbb{D}$ is not conjugate to an affine escaping quadratic-like map.

\subsection{Convexity}

Let us prove here the following result:

\begin{lemma}
\label{lemma:convex}
Suppose that $f \in \mathcal{B}$ satisfies $\left\lVert f -f_{0} \right\rVert_{\mathcal{B}} < 10$. Then $f$ is transcendental and $f \colon \mathbb{R} \rightarrow \mathbb{R}$ is convex.
\end{lemma}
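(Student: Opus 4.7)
The plan is to extract positivity of the even-order Taylor coefficients of $f$ at $0$ from the norm bound, and then use this positivity on both sides: infinitely many nonzero coefficients give transcendence, while evenness combined with positivity of the second-derivative Taylor series gives convexity on $\mathbb{R}$.

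First I would record the Taylor data of $f_{0}$. Since $\cosh(z) = \sum_{j \geq 0} z^{2 j}/(2 j)!$, we have $f_{0}^{(0)}(0) = -2$, $f_{0}^{(2 j)}(0) = 10$ for every $j \geq 1$, and all odd-order derivatives vanish; in particular $\left\lVert f_{0} \right\rVert_{\mathcal{B}} = 10$. Because $f$ is even, $f^{(2 j +1)}(0) = 0$ for all $j \geq 0$, so the norm condition $\left\lVert f -f_{0} \right\rVert_{\mathcal{B}} < 10$ gives only information at even orders; specifically $\left\lvert f^{(2 j)}(0) -10 \right\rvert < 10$ for every $j \geq 1$, which forces
\[ f^{(2 j)}(0) > 0 \quad \text{for all } j \geq 1 \, \text{.} \]

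From this the transcendence of $f$ is immediate: $f$ has infinitely many nonzero Taylor coefficients at $0$, so it cannot be a polynomial.

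For convexity on $\mathbb{R}$ I would differentiate termwise the Taylor series of $f$ at $0$. Since $f \in \mathcal{B}$, the series converges to $f$ on all of $\mathbb{C}$, and differentiation gives
\[ f^{\prime \prime}(x) = \sum_{j \geq 1} \frac{f^{(2 j)}(0)}{(2 j -2)!} x^{2 j -2} \quad \text{for all } x \in \mathbb{R} \, \text{.} \]
Every coefficient is strictly positive by the previous step, and every factor $x^{2 j -2}$ is nonnegative, so $f^{\prime \prime}(x) > 0$ on $\mathbb{R}$; hence $f \colon \mathbb{R} \rightarrow \mathbb{R}$ is convex (indeed strictly convex). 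No real obstacle is expected: the whole argument is a direct reading of the definition of $\lVert . \rVert_{\mathcal{B}}$ together with the evenness of $f$.
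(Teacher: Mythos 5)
Your proof is correct and takes essentially the same approach as the paper: extract $f^{(2j)}(0) > 0$ for $j \geq 1$ from the norm bound and the Taylor coefficients of $f_{0}$, then deduce transcendence from infinitely many nonzero coefficients and convexity from termwise positivity of the series for $f^{\prime\prime}$.
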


\begin{proof}
As $f_{0}^{(2 j)}(0) = 10$ for all $j \in \mathbb{Z}_{\geq 1}$ and $\left\lVert f -f_{0} \right\rVert_{\mathcal{B}} < 10$, we have $f^{(2 j)}(0) > 0$ for all $j \in \mathbb{Z}_{\geq 1}$. Therefore, $f$ is transcendental. Moreover, $f \colon \mathbb{R} \rightarrow \mathbb{R}$ is convex as \[ \forall x \in \mathbb{R}, \, f^{\prime \prime}(x) = \sum_{j = 0}^{+\infty} \frac{f^{(2 j +2)}(0)}{(2 j)!} x^{2 j} > 0 \, \text{.} \] Thus, the lemma is proved.
\end{proof}

\subsection{Quadratic-like maps}

Let us show here that any $f \in \mathcal{B}$ sufficiently close to $f_{0}$ induces an escaping quadratic-like map $f \colon f^{-1}(\mathbb{D}) \cap \mathbb{D} \rightarrow \mathbb{D}$.

Let us recall that a \emph{quadratic-like map} is a holomorphic proper map $f \colon V \rightarrow W$ of degree $2$, where $V \Subset W$ are nonempty simply connected open subsets of $\mathbb{C}$. In this situation, it follows from the Riemann--Hurwitz formula that $f \colon V \rightarrow W$ has a unique critical point $\gamma_{f} \in V$. Moreover, if $f\left( \gamma_{f} \right) \in W \setminus V$, then $f \colon f^{-1}(V) \rightarrow V$ is an escaping quadratic-like map. However, note that escaping quadratic-like maps are not quadratic-like maps.

\begin{lemma}
\label{lemma:quadLike1}
The holomorphic map $f_{0} \colon 2 \mathbb{D} \rightarrow f_{0}(2 \mathbb{D})$ is proper of degree $2$ and \[ D(-2, 4) \subset f_{0}(\mathbb{D}) \subset D(-2, 7) \quad \text{and} \quad D(-2, 9) \subset f_{0}(2 \mathbb{D}) \, \text{.} \]
\end{lemma}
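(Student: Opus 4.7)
The plan rests on the identity
\[
f_0(z) + 2 = 10(\cosh(z) - 1) = 20\sinh^2(z/2),
\]
combined with the standard formula $|\sinh(x+iy)|^2 = \sinh^2(x) + \sin^2(y)$. Writing $z/2 = u + iv$, this gives $|f_0(z) + 2| = 20(\sinh^2(u) + \sin^2(v))$, and the constraint $|z| = \rho$ becomes $u^2 + v^2 = \rho^2/4$. Thus the entire lemma reduces to optimizing one elementary real function on two circles (for $\rho = 1$ and $\rho = 2$), plus an arithmetic comparison.

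For the properness and degree claim, the idea is to exploit that $\cosh(z) = \cosh(w)$ if and only if $z \equiv \pm w \pmod{2\pi i \mathbb{Z}}$. Since $\overline{2\mathbb{D}}$ has diameter $4 < 2\pi$, the only such identification between two points of $\overline{2\mathbb{D}}$ is $z \sim -z$. In particular, a point on $\partial(2\mathbb{D})$ and a point in $2\mathbb{D}$ cannot share an $f_0$-value, since they would have to be negatives of each other and hence have equal modulus. This shows $f_0(\partial(2\mathbb{D})) \cap f_0(2\mathbb{D}) = \varnothing$, which is the standard criterion guaranteeing that $f_0 \colon 2\mathbb{D} \to f_0(2\mathbb{D})$ is proper. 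The degree can then be read off at any regular value, e.g., $w = 0$, whose preimage in $2\mathbb{D}$ consists of the two real solutions of $\cosh(z) = 6/5$.

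For the optimization of $\varphi(u,v) = \sinh^2(u) + \sin^2(v)$ under $u^2 + v^2 = \rho^2/4$, the Lagrange equation reduces to $\sinh(2u)/u = \sin(2v)/v$ when both $u, v \neq 0$. But for nonzero real arguments, $\sinh(2u)/u > 2$ by the Taylor expansion of $\sinh$, while $\sin(2v)/v < 2$ follows from $|\sin(x)| < |x|$ for real $x \neq 0$; so no interior critical point exists and the extrema lie on the coordinate axes. A direct evaluation then gives that the maximum of $|f_0(z) + 2|$ on $|z| \leq \rho$ is $20\sinh^2(\rho/2)$ (attained at $z = \pm\rho$) and the minimum on $|z| = \rho$ is $20\sin^2(\rho/2)$ (attained at $z = \pm i\rho$).

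Finally, for the three disk inclusions: the containment $f_0(\mathbb{D}) \subset D(-2, 7)$ follows from the maximum-modulus principle applied to $f_0 + 2$, using $20\sinh^2(1/2) < 7$. The two lower-bound inclusions $D(-2, 4) \subset f_0(\mathbb{D})$ and $D(-2, 9) \subset f_0(2\mathbb{D})$ follow from a Rouch\'{e} argument: for any $w_0 \in D(-2, 4)$, on $\partial\mathbb{D}$ one has $|w_0 + 2| < 4 < 20\sin^2(1/2) \leq |f_0(z) + 2|$, so $f_0 - w_0$ has the same number of zeros in $\mathbb{D}$ as $f_0 + 2 = 20\sinh^2(z/2)$, which has a double zero at $0$ and no other zero in $\mathbb{D}$; hence $w_0 \in f_0(\mathbb{D})$. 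The same argument, now using $20\sin^2(1) > 9$ on $\partial(2\mathbb{D})$, handles the last inclusion. I expect no genuine obstacle; the one point that warrants a moment's thought is the strict inequalities $\sinh(2u)/u > 2 > \sin(2v)/v$ for nonzero real $u, v$, which are immediate from the series expansions.
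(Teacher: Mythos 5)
Your proof is correct, but it follows a genuinely different route from the paper's. The paper factors $\cosh(z) = \phi(z^2)$ with $\phi$ univalent on $9\mathbb{D}$ (using that $\cosh$ is an even proper degree-$2$ map from the strip $\lvert \Im z \rvert < \pi$ to the slit plane), which makes the degree-$2$ properness of $f_0$ on $2\mathbb{D}$ immediate, and then obtains all three disk inclusions in one stroke from the Koebe distortion theorem applied to $\phi$, with $\phi(0) = 1$ and $\phi'(0) = 1/2$. You instead work with the identity $f_0(z) + 2 = 20\sinh^2(z/2)$ and the explicit formula $\lvert\sinh(u+iv)\rvert^2 = \sinh^2 u + \sin^2 v$, optimize this via Lagrange multipliers on circles (the clean observation $\sinh(2u)/u > 2 > \sin(2v)/v$ kills all non-axis critical points), and then deduce the inclusions from the maximum modulus principle and Rouch\'{e}'s theorem; properness is handled by the periodicity structure of $\cosh$ and a separate degree count at a regular value. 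Both are sound. The paper's argument is shorter and conceptually tighter --- once the univalent factorization is in hand, Koebe does all the work and there is no optimization or Rouch\'{e} step --- whereas yours is more elementary (no Koebe) and produces the sharp extremal values $20\sinh^2(\rho/2)$ and $20\sin^2(\rho/2)$ on circles, which are strictly better than the Koebe bounds, at the cost of more case-by-case computation and the separate properness/degree argument.
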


\begin{proof}
The map $\cosh$ induces a proper holomorphic map of degree $2$ from the strip \[ S = \left\lbrace z \in \mathbb{C} : \Im(z) \in (-\pi, \pi) \right\rbrace \] to the slit plane $\mathbb{C} \setminus (-\infty, -1]$. Since $\cosh$ is even and $3 \mathbb{D} \subset S$, there is a univalent map $\phi \colon 9 \mathbb{D} \rightarrow \mathbb{C}$ such that $\cosh(z) = \phi\left( z^{2} \right)$ for all $z \in 3 \mathbb{D}$. It follows that the map $f_{0} \colon 2 \mathbb{D} \rightarrow f_{0}(2 \mathbb{D})$ is proper of degree $2$. Moreover, we have $\phi(0) = 1$ and $\phi^{\prime}(0) = \frac{1}{2}$. Therefore, it follows from the Koebe distortion theorem that \[ D(1, r) \subset \phi(\mathbb{D}) \subset D(1, R) \, \text{,} \quad \text{with} \quad r = \frac{\frac{1}{2}}{\left( 1 +\frac{1}{9} \right)^{2}} > \frac{2}{5} \, \text{,} \quad R = \frac{\frac{1}{2}}{\left( 1 -\frac{1}{9} \right)^{2}} < \frac{7}{10} \, \text{,} \] and \[ D(1, s) \subset \phi(4 \mathbb{D}) \, \text{,} \quad \text{with} \quad s = \frac{\frac{4}{2}}{\left( 1 +\frac{4}{9} \right)^{2}} > \frac{9}{10} \, \text{.} \] Thus, the desired inclusions hold, and the lemma is proved.
\end{proof}

\begin{lemma}
\label{lemma:quadLike2}
Suppose that $f \in \mathcal{B}$ satisfies $\left\lVert f -f_{0} \right\rVert_{\mathcal{B}} < \frac{1}{4}$. Then $f \colon \mathbb{D} \rightarrow f(\mathbb{D})$ is a quadratic-like map and $f(0) \in (-\infty, -1)$.
\end{lemma}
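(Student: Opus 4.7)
The first claim, $f(0) \in (-\infty, -1)$, is immediate: by the definition of $\lVert \cdot \rVert_{\mathcal{B}}$, $\lvert f(0) - f_{0}(0) \rvert \leq \lVert f - f_{0} \rVert_{\mathcal{B}} < 1/4$, and $f_{0}(0) = -2$, so $f(0) \in (-9/4, -7/4) \subset (-\infty, -1)$.

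For the quadratic-like property, the plan is to exploit the evenness of $f$. I write $f(z) = G(z^{2})$ for a uniquely determined entire function $G$; similarly $f_{0}(z) = G_{0}(z^{2}) = 10 \phi(z^{2}) - 12$, where $\phi$ is the univalent map from the proof of Lemma~\ref{lemma:quadLike1}, so $G_{0}$ is univalent on $9 \mathbb{D}$. Since the squaring map $s(z) = z^{2}$ is a proper holomorphic self-map of $\mathbb{D}$ of degree $2$, it suffices to establish that $G$ is injective on some open neighborhood of $\overline{\mathbb{D}}$ and that $\overline{\mathbb{D}} \subset G(\mathbb{D})$. From these two facts, $f = G \circ s \colon \mathbb{D} \to f(\mathbb{D}) = G(\mathbb{D})$ is a proper degree-$2$ holomorphic map between simply connected domains with $\mathbb{D} \Subset f(\mathbb{D})$, which is the quadratic-like property. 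The central quantitative input comes from applying the inequality $\lvert h(z) \rvert \leq \lVert h \rVert_{\mathcal{B}} \cosh(\lvert z \rvert)$ to $h = f - f_{0}$: choosing $z$ with $z^{2} = w$ yields $\lvert G(w) - G_{0}(w) \rvert \leq \lVert f - f_{0} \rVert_{\mathcal{B}} \cdot \cosh(\sqrt{\lvert w \rvert}) < \cosh(2)/4 < 1$ for $\lvert w \rvert \leq 4$.

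The inclusion $\overline{\mathbb{D}} \subset G(\mathbb{D})$ will follow from Rouch\'{e}'s theorem on $\partial \mathbb{D}$: the proof of Lemma~\ref{lemma:quadLike1} in fact gives $\overline{D(-2, 4)} \subset G_{0}(\mathbb{D})$, and since $G_{0}$ is univalent on the larger disk $9 \mathbb{D}$ one has $G_{0}(\partial \mathbb{D}) \cap G_{0}(\mathbb{D}) = \varnothing$, so $\lvert G_{0}(u) + 2 \rvert > 4$ for $u \in \partial \mathbb{D}$. Combined with $\lvert w + 2 \rvert \leq 3$ for $w \in \overline{\mathbb{D}}$, this yields $\lvert G_{0}(u) - w \rvert > 1 > \cosh(1)/4 \geq \lvert G(u) - G_{0}(u) \rvert$ on $\partial \mathbb{D}$, and Rouch\'{e} shows that $G - w$ has the same number of zeros in $\mathbb{D}$ as $G_{0} - w$, namely $1$.

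For the injectivity of $G$ on a neighborhood of $\overline{\mathbb{D}}$, my plan is to use the divided-difference function $D(u_{1}, u_{2}) = (G(u_{1}) - G(u_{2}))/(u_{1} - u_{2})$, extended holomorphically to the diagonal via $D(u, u) = G'(u)$, together with its counterpart $D_{0}$ for $G_{0}$. Injectivity of $G_{0}$ on $9 \mathbb{D}$ implies $D_{0} \neq 0$ on $9 \mathbb{D} \times 9 \mathbb{D}$, and a Koebe distortion estimate for $\phi$ provides a uniform positive lower bound for $\lvert D_{0} \rvert$ on $\overline{r \mathbb{D}} \times \overline{r \mathbb{D}}$ for some $r > 1$. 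On the same set, $\lvert D - D_{0} \rvert$ is controlled by $\sup_{\lvert w \rvert \leq r} \lvert H'(w) \rvert$ with $H = G - G_{0}$, which is bounded via Cauchy's formula using $\lvert H \rvert < 1$ on $4 \mathbb{D}$. For $r$ slightly larger than $1$, the Koebe lower bound should exceed the Cauchy upper bound, giving $D \neq 0$ on $\overline{r \mathbb{D}} \times \overline{r \mathbb{D}}$ and hence the required injectivity. This final perturbative univalence check is the main obstacle of the proof.
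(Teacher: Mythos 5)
Your route is genuinely different from the paper's. The paper works with $f$ itself and never factors out the squaring map: from Lemma~\ref{lemma:quadLike1}, $f_{0}(\partial(2\mathbb{D}))$ avoids $D(-2,9)$ and $f_{0}(\partial\mathbb{D})$ avoids $D(-2,4)$, so the single bound $\lvert f - f_{0} \rvert < 1$ on $2\overline{\mathbb{D}}$ together with Rouch\'{e} on both circles gives at once $\overline{D(-2,3)} \subset f(\mathbb{D}) \subset D(-2,8)$ and that every $w \in D(-2,8)$ has exactly two preimages (with multiplicity) in $2\mathbb{D}$; evenness then forces those two preimages to lie in $\mathbb{D}$ whenever $w \in f(\mathbb{D})$, which yields properness and degree $2$ directly (simple connectivity of $f(\mathbb{D})$ coming from Riemann--Hurwitz). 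You instead factor $f = G \circ s$, and your reduction is correct: given that $G$ is univalent on a neighborhood of $\overline{\mathbb{D}}$ and $\overline{\mathbb{D}} \subset G(\mathbb{D})$, the quadratic-like property follows, and both your $f(0)$ estimate and your Rouch\'{e} argument for $\overline{\mathbb{D}} \subset G(\mathbb{D})$ are sound.

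The gap is the one you flag: the perturbative univalence of $G$. It is fillable, but it is real work rather than a formality. Writing $\tilde\psi(z) = \tfrac{2}{9}\bigl(\phi(9z) - 1\bigr)$, a normalized schlicht function, one has $D_{0}(u_{1},u_{2}) = 5\, D_{\tilde\psi}(u_{1}/9, u_{2}/9)$, and the two-point Koebe inequality $\bigl\lvert D_{\psi}(z_{1},z_{2}) \bigr\rvert \geq \dfrac{(1-\lvert z_{1}\rvert^{2})\lvert \psi'(z_{1})\rvert}{\lvert 1 - \bar{z}_{1}z_{2}\rvert\,(1+\sigma)^{2}}$, $\sigma = \bigl\lvert \tfrac{z_{1}-z_{2}}{1-\bar{z}_{1}z_{2}} \bigr\rvert$, combined with the Koebe derivative bound, yields (say) $\lvert D_{0} \rvert > 1.3$ on $\overline{(3/2)\mathbb{D}}^{2}$; meanwhile Cauchy's estimate with $\lvert H(w) \rvert \leq \tfrac14\cosh(\sqrt{\lvert w\rvert})$ on the circle $\lvert w \rvert = 9$ gives $\sup_{(3/2)\overline{\mathbb{D}}} \lvert H' \rvert < 0.5$, so the comparison closes. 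But none of this is in your proposal beyond the plan, and until it is carried out the proof is incomplete. The paper's choice to run Rouch\'{e} on $f$ over the two concentric circles, rather than on the square root $G$, is precisely what lets it bypass this delicate step.
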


\begin{proof}
Set $g = f -f_{0}$. Then, for every $z \in 2 \overline{\mathbb{D}}$, \[ \left\lvert g(z) \right\rvert = \left\lvert \sum_{j = 0}^{+\infty} \frac{g^{(2 j)}(0)}{(2 j)!} z^{2 j} \right\rvert \leq \lVert g \rVert_{\mathcal{B}} \cdot \sum_{j = 0}^{+\infty} \frac{2^{2 j}}{(2 j)!} < \frac{\cosh(2)}{4} < 1 \, \text{.} \] In particular, \[ f(0) = -2 +g(0) \in D(-2, 1) \cap \mathbb{R} \subset (-\infty, -1) \, \text{.} \] Moreover, by Lemma~\ref{lemma:quadLike1} and the argument principle, it follows that \[ \overline{D(-2, 3)} \subset f(\mathbb{D}) \subset D(-2, 8) \] and every element of $D(-2, 8)$ has exactly two preimages in $2 \mathbb{D}$ under $f$, counting multiplicities. Therefore, $\mathbb{D} \Subset f(\mathbb{D})$ and the map $f \colon \mathbb{D} \rightarrow f(\mathbb{D})$ is proper of degree $2$. This completes the proof of the lemma.
\end{proof}

\subsection{Same periodic points}

Let us prove here the following result:

\begin{lemma}
\label{lemma:samePer}
Suppose that $f \in \mathcal{B}$ is such that $f \colon \mathbb{R} \rightarrow \mathbb{R}$ is convex, $f \colon \mathbb{D} \rightarrow f(\mathbb{D})$ is a quadratic-like map and $f(0) \in (-\infty, -1)$. Then $f \colon f^{-1}(\mathbb{D}) \cap \mathbb{D} \rightarrow \mathbb{D}$ is an escaping quadratic-like map and its periodic points coincide with those of $f \colon \mathbb{R} \rightarrow \mathbb{R}$.
\end{lemma}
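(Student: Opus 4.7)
The plan is to deduce the escaping structure from the criterion stated in the preamble, analyse $f$ along the imaginary axis to fix the $\sigma$-symmetry of the two components of $U := f^{-1}(\mathbb{D}) \cap \mathbb{D}$, and then combine a real dynamics argument with the Cantor coding of the filled Julia set. Evenness forces $f'(0) = 0$, so $0$ is a critical point of the quadratic-like map $f \colon \mathbb{D} \to f(\mathbb{D})$; Riemann--Hurwitz makes it the unique and simple critical point, which together with convexity yields $f''(0) > 0$. Since $f(0) \in (-\infty, -1) \subset f(\mathbb{D}) \setminus \mathbb{D}$, the stated criterion immediately gives that $f \colon U \to \mathbb{D}$ is an escaping quadratic-like map.

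I will then study $y \mapsto f(iy)$, which is real analytic with value $f(0) < -1$ at $y = 0$, second-order expansion $f(0) - \tfrac{1}{2} f''(0) y^{2} + O(y^{4})$, and nonvanishing $y$-derivative $i f'(iy)$ on $(-1, 1) \setminus \{0\}$ because $0$ is the unique critical point of $f$ in $\mathbb{D}$. Consequently $f(iy) < f(0) < -1$ for all $y \in (-1, 1) \setminus \{0\}$, so every preimage of $\overline{\mathbb{D}}$ in $\mathbb{D}$ must be real (by the two-fold symmetry of $f$). Using that $f \colon [0, \infty) \to [f(0), \infty)$ is a strictly increasing bijection, I define $0 < x_{1} < x_{2}$ by $f(x_{1}) = -1$ and $f(x_{2}) = 1$. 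The compact containment $\overline{U} \Subset \mathbb{D}$ forces $x_{2} < 1$, whence $U \cap \mathbb{R} = (-x_{2}, -x_{1}) \cup (x_{1}, x_{2})$ meets both connected components $U_{0}, U_{1}$ of $U$, and each $U_{i}$ is therefore invariant under complex conjugation $\sigma$.

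For the inclusion from real periodic points to periodic points of the escaping map, I analyse $h(x) := f(x) - x$: it is convex with $h(0) < 0$ and $h(\pm \infty) = +\infty$, hence has exactly two real zeros $\beta < 0 < \alpha$, and $h(x_{2}) = 1 - x_{2} > 0$ gives $\alpha < x_{2} < 1$. For a real periodic orbit with maximum $M$ and minimum $m$, the permutation structure of $f$ on the orbit forces $f(M) \leq M$ so $M \leq \alpha$; and if $m < -\alpha$ then $f(m) = f(|m|) > f(\alpha) = \alpha \geq M$ would contradict $f(m) \leq M$. Hence the orbit lies in $[-\alpha, \alpha] \subset (-1, 1) \subset \mathbb{D}$, and so consists of periodic points of $f \colon U \to \mathbb{D}$.

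For the reverse inclusion, I invoke the standard Cantor coding of the filled Julia set $K$ of an escaping quadratic-like map: each $z \in K$ has a unique itinerary $(s_{n}) \in \{0, 1\}^{\mathbb{N}}$ with $f^{n}(z) \in U_{s_{n}}$, and $z \mapsto (s_{n})$ is injective because the two inverse branches of $f$ contract strictly in the hyperbolic metric of $\mathbb{D}$. For a periodic $z \in K$, the realness of $f$ and the $\sigma$-invariance of each $U_{i}$ give $f^{n}(\sigma z) = \sigma f^{n}(z) \in \sigma U_{s_{n}} = U_{s_{n}}$, so $\sigma z$ has the same itinerary as $z$; injectivity then forces $\sigma z = z$, i.e.\ $z \in \mathbb{R}$. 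The main obstacle is the imaginary-axis analysis that forces each $U_{i}$ to be $\sigma$-invariant — without the strict positivity $f''(0) > 0$ supplied by Riemann--Hurwitz, $f(iy)$ could rise above $-1$ and create non-real preimages of $\mathbb{D}$, making $\sigma$ swap $U_{0}$ and $U_{1}$ and invalidating the coding argument.
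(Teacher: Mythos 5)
Your proposal follows the same overall strategy as the paper: establish the escaping quadratic-like structure via the even critical point at $0$, use the Cantor itinerary coding given by the contracting inverse branches, show the two components of $U$ are each $\sigma$-invariant so that coded points are real, and use a real-axis convexity argument to confine the real periodic orbits to $(-1, 1)$. Your analysis of $h = f - \id$ is a correct minor variant of the paper's observation that $f(x) > x$ on $[1, +\infty)$.

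The imaginary-axis detour, however, is both unnecessary and contains a false intermediate claim. The assertion that ``every preimage of $\overline{\mathbb{D}}$ in $\mathbb{D}$ must be real'' cannot be right as stated: $U = f^{-1}(\mathbb{D}) \cap \mathbb{D}$ is a nonempty open set, so it is certainly not contained in $\mathbb{R}$. What you actually need is that each component $U_i$ is $\sigma$-invariant, and this follows directly from $U \cap \mathbb{R} \neq \varnothing$ alone: since $f$ is real, $\sigma(U) = U$, so $\sigma$ permutes the two components; if $\sigma$ swapped them, every $\sigma$-fixed point (i.e.\ every real point) of $U$ would have to lie in both components at once, which is impossible, so $U \cap \mathbb{R}$ would be empty. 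Your subsequent real-axis computation showing $(x_1, x_2) \subset U \cap \mathbb{R}$ already settles the matter; the imaginary-axis estimate $f(iy) < -1$ contributes nothing, and the ``main obstacle'' you identify at the end is not one. Finally, the claim that $U \cap \mathbb{R}$ ``meets both connected components'' is stated without proof — it is true, but requires the additional observation that $z \mapsto -z$ must swap the two simply connected components because $0 \notin U$ — yet it is also more than you need, since a single real point in $U$ already suffices by the argument above.
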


\begin{proof}
Set $U = f^{-1}(\mathbb{D}) \cap \mathbb{D}$. As $f \colon \mathbb{D} \rightarrow f(\mathbb{D})$ is a quadratic-like map, $U \Subset \mathbb{D}$ and the map $f \colon U \rightarrow \mathbb{D}$ is proper of degree $2$. Moreover, the unique critical point of $f$ in $\mathbb{D}$ is $0$ since $f$ is even, and $f(0) \in \mathbb{C} \setminus \mathbb{D}$ by assumption. Therefore, $f \colon U \rightarrow \mathbb{D}$ is an escaping quadratic-like map. Thus, $U$ has two connected components $U_{-}$ and $U_{+}$ and these are mapped biholomorphically onto $\mathbb{D}$ by $f$. Denote by $g_{\pm} \colon \mathbb{D} \rightarrow U_{\pm}$ the inverse of $f \colon U_{\pm} \rightarrow \mathbb{D}$. We have $U_{+} = -U_{-}$ and $g_{+} = -g_{-}$ because $f$ is even. A standard argument using the fact that the map $g_{\pm}$ is contracting with respect to the Poincar\'{e} metric on $\mathbb{D}$ shows that, for every sign sequence $\boldsymbol{\varepsilon} = \left( \epsilon_{n} \right)_{n \geq 0}$, there is a point $\zeta(\boldsymbol{\varepsilon}) \in \mathbb{D}$ such that \[ \bigcap_{n \geq 0} g_{\epsilon_{0}} \circ \dotsb \circ g_{\epsilon_{n}}(\mathbb{D}) = \left\lbrace \zeta(\boldsymbol{\varepsilon}) \right\rbrace \, \text{.} \] Now, denote by $\sigma$ the shift map, which sends a sign sequence $\left( \epsilon_{n} \right)_{n \geq 0}$ to $\left( \epsilon_{n +1} \right)_{n \geq 0}$. Then the periodic points of $f \colon U \rightarrow \mathbb{D}$ are precisely the points $\zeta(\boldsymbol{\varepsilon})$, with $\boldsymbol{\varepsilon}$ a sign sequence that is periodic for $\sigma$.

Since $f \colon \mathbb{R} \rightarrow \mathbb{R}$ is even and convex, it is increasing on $\mathbb{R}_{\geq 0}$. As $f \colon \mathbb{D} \rightarrow f(\mathbb{D})$ is a quadratic-like map, it follows that $f(1) > 1$. Therefore, $U_{\pm} \cap \mathbb{R} \neq \varnothing$ because we also have $f(0) < -1$ by assumption. It follows that $U_{\pm}$ is symmetric with respect to the real axis and $g_{\pm}$ commutes with complex conjugation. Therefore, for every sign sequence $\boldsymbol{\varepsilon}$, the set $\left\lbrace \zeta(\boldsymbol{\varepsilon}) \right\rbrace$ is also symmetric with respect to the real axis, and hence $\zeta(\boldsymbol{\varepsilon}) \in \mathbb{R}$. In particular, every periodic point of $f \colon U \rightarrow \mathbb{D}$ is also a periodic point of $f \colon \mathbb{R} \rightarrow \mathbb{R}$.

Finally, note that $f(x) > x$ for all $x \in [1, +\infty)$ as $f \colon \mathbb{R} \rightarrow \mathbb{R}$ is convex, $f(0) < 0$ and $f(1) > 1$. It follows that the map $f \colon \mathbb{R} \rightarrow \mathbb{R}$ has no periodic point in $[1, +\infty)$. Therefore, the periodic points of $f \colon \mathbb{R} \rightarrow \mathbb{R}$ all lie in $(-1, 1) \subset \mathbb{D}$ because $f$ is even. Thus, every periodic point of $f \colon \mathbb{R} \rightarrow \mathbb{R}$ is also a periodic point of $f \colon U \rightarrow \mathbb{D}$, and the lemma is proved.
\end{proof}

\subsection{Proof of the theorem}

Finally, let us combine here Theorem~\ref{theorem:realEven}, Remark~\ref{remark:general} and Lemmas~\ref{lemma:convex}, \ref{lemma:quadLike2} and~\ref{lemma:samePer} in order to prove Theorem~\ref{theorem:quadLike}.

\begin{proof}[Proof of Theorem~\ref{theorem:quadLike}]
Define \[ \mathbb{Q}_{0} = \left\lbrace \frac{p}{q} \in \mathbb{Q} : p \text{ odd}, \, q \text{ even} \right\rbrace \quad \text{and} \quad \mathbb{Q}_{1} = \left\lbrace \frac{p}{q} \in \mathbb{Q} : q \text{ odd} \right\rbrace \] to be the sets of rational numbers with even and odd denominators. Also define \[ E = \mathbb{Q}(i) \, \text{,} \quad \Lambda = \mathbb{Q}_{0} \cup (\mathbb{C} \setminus \mathbb{R}) \quad \text{and} \quad \Lambda^{\prime} = \mathbb{Q}_{1} \cup (\mathbb{C} \setminus \mathbb{R}) \, \text{.} \] By Theorem~\ref{theorem:realEven} and Remark~\ref{remark:general}, there exists $f \in \mathcal{B}$ such that
\begin{enumerate}
\item\label{item:quadLikeDist} $\left\lVert f -f_{0} \right\rVert_{\mathcal{B}} < \frac{1}{4}$,
\item\label{item:quadLikePreim} $f^{-1}(E) = E$,
\item\label{item:quadLikePer} the periodic points of $f$ all lie in $E$,
\item\label{item:quadLikeMult} the multipliers of $f$ at its cycles with period different from $2$ all lie in $\Lambda$,
\item\label{item:quadLikeMultBis} the multipliers of $f$ at its cycles with period $2$ all lie in $\Lambda^{\prime}$.
\end{enumerate}
By the condition~\eqref{item:quadLikeDist} and Lemmas~\ref{lemma:convex}, \ref{lemma:quadLike2} and~\ref{lemma:samePer}, the map $f$ is transcendental, the map $f \colon \mathbb{R} \rightarrow \mathbb{R}$ is convex and $f \colon f^{-1}(\mathbb{D}) \cap \mathbb{D} \rightarrow \mathbb{D}$ is an escaping quadratic-like map whose cycles coincide with those of $f \colon \mathbb{R} \rightarrow \mathbb{R}$. We have $f(\mathbb{Q}) \subset E \cap \mathbb{R} = \mathbb{Q}$ by the condition~\eqref{item:quadLikePreim}. Now, by the condition~\eqref{item:quadLikePer}, the periodic points of $f \colon \mathbb{R} \rightarrow \mathbb{R}$ all lie in $E \cap \mathbb{R} = \mathbb{Q}$. Moreover, by the conditions~\eqref{item:quadLikeMult} and~\eqref{item:quadLikeMultBis}, the multipliers of $f \colon \mathbb{R} \rightarrow \mathbb{R}$ all lie in $\Lambda \cap \mathbb{R} = \mathbb{Q}_{0}$, apart from that at its unique cycle with period $2$ which lies in $\Lambda^{\prime} \cap \mathbb{R} = \mathbb{Q}_{1}$. In particular, the multipliers of $f \colon \mathbb{R} \rightarrow \mathbb{R}$ all lie in $\mathbb{Q}$. Finally, the product of the multipliers of $f \colon f^{-1}(\mathbb{D}) \cap \mathbb{D} \rightarrow \mathbb{D}$ at its two fixed points lies in $\mathbb{Q}_{0}$, as both of them lie in $\mathbb{Q}_{0}$, and hence it differs from the multiplier at the cycle with period $2$. It follows that $f \colon f^{-1}(\mathbb{D}) \cap \mathbb{D} \rightarrow \mathbb{D}$ is not conjugate to an affine escaping quadratic-like map. Thus, the theorem is proved.
\end{proof}

\providecommand{\bysame}{\leavevmode\hbox to3em{\hrulefill}\thinspace}
\providecommand{\MR}{\relax\ifhmode\unskip\space\fi MR }
% \MRhref is called by the amsart/book/proc definition of \MR.
\providecommand{\MRhref}[2]{%
	\href{http://www.ams.org/mathscinet-getitem?mr=#1}{#2}
}
\providecommand{\href}[2]{#2}

\end{document}